\documentclass[oneside, a4paper,11pt,reqno]{amsart}
\textheight=23cm
\textwidth=16cm
\voffset=-1.5cm
\hoffset=-1.5cm
\parskip=2mm

\usepackage{amssymb,amsmath,amsthm}
\usepackage[T1]{fontenc}

\newcommand{\Z}{{Z\!\!\!Z}}

\newtheorem{hypo}{Hypothesis}

\newtheorem{prop}[hypo]{Proposition}

\newtheorem{thm}[hypo]{Theorem}

\newtheorem{lem}[hypo]{Lemma}

\newtheorem{rqe}[hypo]{Remark}

\DeclareMathOperator{\var}{Var}

\def\R{{\bf R}}

\def\A{\mathcal{A}}
\def\B{\mathcal{B}}
\def\C{\mathcal{C}}
\def\D{\mathcal{D}}

\def\I{\mathcal{I}}
\def\O{\mathcal{O}}
\def\E{\mathcal{E}}
\def\R{\mathcal{R}}
\def\F{\mathcal{F}}

\def\PP{\mathbb{P}}
\def\RR{\mathbb{R}}
\def\ZZ{\mathbb{Z}}

\def\EE{\mathbb{E}}
\def\NN{\mathbb{N}}

\newcommand {\pare}[1] {\left( {#1} \right)}
\newcommand {\cro}[1] {\left[ {#1} \right]}
\newcommand {\refeq}[1] {(\ref{#1})}
\newcommand {\va}[1] {\left| {#1} \right|}
\newcommand {\acc}[1] {\left\{ {#1} \right\}}
\newcommand {\floor}[1] {\left\lfloor {#1} \right\rfloor}

\newcommand {\ceil}[1] {\left\lceil {#1} \right\rceil}

\title[A local limit theorem for RWRS and RWROL]
       {A local limit theorem for random walks in random scenery and on randomly oriented lattices}
%\date{\ }
\author{Fabienne Castell} 
\address{LATP, UMR CNRS 6632. Centre de Math\'ematiques et Informatique.
Universit\'e Aix-Marseille I. 39, rue Joliot Curie. 13 453 Marseille Cedex
13. France.}
\email{Fabienne.Castell@cmi.univ-mrs.fr}

\author{Nadine Guillotin-Plantard} 
\address{Institut Camille Jordan, CNRS UMR 5208, Universit\'e de Lyon, Universit\'e Lyon 1, 43, Boulevard du 11 novembre 1918, 69622 Villeurbanne, France.}
\email{nadine.guillotin@univ-lyon1.fr}

\author{Fran\c{c}oise P\`ene}
\address{Universit\'e Europ\'eenne de Bretagne, Universit\'e de Brest,
Laboratoire de Math\'ematiques, UMR CNRS 6205, 29238 Brest cedex, France}
\email{francoise.pene@univ-brest.fr}

\author{Bruno Schapira}
\address{D\'epartement de Math\'ematiques, CNRS UMR 8628, B\^at. 425, Universit\'e Paris-Sud 11, F-91405 Orsay, cedex, France. }
\email{bruno.schapira@math.u-psud.fr}

\subjclass[2000]{60F05; 60G52}
\keywords{Random walk in random scenery; random walk on randomly oriented lattices; local limit theorem; stable process\\
This research was supported by the french ANR projects MEMEMO and RANDYMECA}

\begin{document}

\begin{abstract} 
Random walks in random scenery are processes defined by  
$Z_n:=\sum_{k=1}^n\xi_{X_1+...+X_k}$, where $(X_k,k\ge 1)$ and $(\xi_y,y\in\mathbb Z)$
are two independent sequences of i.i.d. random variables. We assume here that their distributions belong
to the normal domain of attraction of stable laws 
with index $\alpha\in (0,2]$ and $\beta\in (0,2]$ respectively. These processes were first studied by H. Kesten and F. Spitzer, who proved
the convergence in distribution when $\alpha\neq 1$ and as $n\to \infty$, of
$n^{-\delta}Z_n$, for some suitable $\delta>0$ depending on $\alpha$ and $\beta$. 
Here we are interested in the convergence, as $n\to \infty$, of $n^\delta{\mathbb P}(Z_n=\lfloor n^{\delta} 
x\rfloor)$, when $x\in \RR$ is fixed. We also consider the case of random walks on randomly oriented lattices for which we obtain similar results. 
%and more generally for $\left(n^\delta{\mathbb P}(Z_{n}=\right)_n$ is shown here.
\end{abstract} 
\maketitle

\section{Introduction} 

\subsection{About the model.}
  Random walks in random scenery (RWRS) 
are simple models of processes in disordered
media with long-range correlations. They have been used in a wide
variety of models in physics to study anomalous dispersion in layered
random flows \cite{matheron_demarsily},  diffusion with random sources, 
or spin depolarization in random fields (we refer the reader
to Le Doussal's review paper \cite{ledoussal} for a discussion of these
models). 

  On the mathematical side, motivated by the construction of 
new self-similar processes with stationary increments, 
Kesten and Spitzer \cite{KestenSpitzer} and Borodin  \cite{Borodin, Borodin1} 
introduced RWRS in dimension one and proved functional limit theorems. 
These processes are defined as follows. Let $\xi:=(\xi_y,y\in \ZZ)$ and $X:=(X_k,k\ge 1)$ 
be two independent sequences of independent
identically distributed random variables taking values in $\RR$ and $\ZZ$ 
respectively. The sequence $\xi$ is called the {\it random scenery}. 
The sequence $X$ is the sequence of increments of the {\it random walk}  
$(S_n, n \geq 0)$
defined by $S_0:=0$ and  $S_n:=\sum_{i=1}^{n}X_i$, for $n\ge 1$. 
The {\it random walk in random scenery} $Z$ is 
then defined for all $n\ge 1$ by
$$Z_n:=\sum_{k=0}^{n-1}\xi_{S_k}.$$ 
Denoting by  
$N_n(y)$ the local time of the random walk $S$~:
$$N_n(y)=\#\{k=0,...,n-1\ :\ S_k=y\} \, ,
$$
it is straightforward to see that 
$Z_n$ can be rewritten as $Z_n=\sum_y\xi_yN_n(y)$.

As in \cite{KestenSpitzer}, 
the distribution of $\xi_1$ is assumed to belong to the normal 
domain of attraction of a strictly stable distribution $\mathcal{S}_{\beta}$ of 
index $\beta\in (0,2]$, with characteristic function given by
$$\phi(u)=e^{-|u|^\beta(A_1+iA_2 \text{sgn}(u))}\quad u\in\mathbb{R},$$
where $0<A_1<\infty$ and $|A_1^{-1}A_2|\le |\tan (\pi\beta/2)|$. When $\beta\neq 1$, 
this is the most general form of a strictly stable distribution. In the case 
$\beta=1$, this is the general form of a random variable $Y$ with strictly stable 
distribution satisfying the following symmetry condition~: 
\begin{equation}
 \sup_{M>0}|\EE(Y {\bf 1}_{\{|Y|<M\}})| <+\infty .
\end{equation}
We will denote by $f_\beta$ the density function of the law 
$\mathcal{S}_{\beta}$. 

\noindent Concerning the random walk, 
the distribution of $X_1$ is assumed to belong to the normal 
domain of attraction of a strictly stable distribution 
$\mathcal{S}_{\alpha}$ with 
index $\alpha\in (0,2]$. In this paper we will actually not consider the case 
$\alpha=1$ (see Remark 2 in \cite{KestenSpitzer} for some discussion 
on this case). 

\noindent 
\noindent Then the following weak convergences hold in the space of 
c\`ad-l\`ag real-valued functions 
defined on $[0,\infty)$ and on $\mathbb R$ respectively~:
$$\left(n^{-\frac{1}{\alpha}} S_{\lfloor nt\rfloor}\right)_{t\geq 0}   
\mathop{\Longrightarrow}_{n\rightarrow\infty}
^{\mathcal{L}} \left(U(t)\right)_{t\geq 0}$$
$$\mbox{\rm and} \  \  \   \left(n^{-\frac{1}{\beta}} 
\sum_{k=0}^{\lfloor nx\rfloor}\xi_k\right)_{x\in\mathbb R}
   \mathop{\Longrightarrow}_{n\rightarrow\infty}^{\mathcal{L}} \left(Y(x)\right)_{x\in\mathbb R},$$
where $U$ and $Y$ are two independent L\'evy processes such 
that $U(0)=0$, $Y(0)=0$, 
$U(1)$ has distribution $\mathcal{S}_\alpha$, $Y(1)$ and 
$Y(-1)$ have distribution  $\mathcal{S}_\beta$.
When $\alpha\in (1,2]$, the random walk $(S_n, n \geq 0)$ is recurrent, 
and the limiting process $U$ admits a local time process. We denote
 by $(L_t(x), t \in \RR^+, x \in \RR)$ 
the jointly continuous version of this  local time. 

%\noindent Note that, as in \cite{KestenSpitzer}, we do not allow slowly varying 
%functions in the renormalization, though it would probably be possible, 
%but computations would be much more elaborated. 

\noindent 
Let $$\delta:=1-\frac{1}{\alpha}+\frac{1}{\alpha \beta}.$$
Papers \cite{KestenSpitzer,Borodin,Borodin1} 
  proved that the following weak convergences 
hold in the space of continuous real-valued functions defined on  $[0,\infty)$~:  
\begin{eqnarray}
\label{eq1.08}
\mbox{if} \ \alpha>1,\ &\ & \left(n^{-\delta} Z_{nt}\right)_{t\geq 0} \mathop{\Longrightarrow}
_{n\rightarrow\infty}^{\mathcal{L}}  \left(\Delta(t)\right)_{t\geq 0}\\
\mbox{if}\  \alpha<1,\ &\ & \left(n^{-\frac 1\beta} Z_{nt}\right)_{t\geq 0}
 \mathop{\Longrightarrow}
_{n\rightarrow\infty}^{\mathcal{L}}   \left(Y(t)\ {\mathbb E}[(\widetilde N_{\infty}^{\beta-1}(0)]^{\frac 1\beta} 
     \right)_{t\geq 0}\, ,
\end{eqnarray} 
where 
\begin{itemize} 
\item $Z_s$ is defined as the linear interpolation
$Z_s=Z_n+(s-n)(Z_{n+1}-Z_n)$ when $n\leq s\leq n+1$, 
\item $\Delta$ is the 
process defined by
$$\Delta(t)=\int_{-\infty}^{+\infty} L_t(x)\, {\rm d}Y(x)\, ,$$
\item $\widetilde N_\infty(0)$ is the total time spent in 0 by 
the two-sided random walk
$(S_k,k\in{\mathbb Z})$ with $S_{-k}=-\sum_{m=1}^{k}X_{-m}$
(where $(X_{-k},k\ge 1)$ is independent of $(X_k,k\ge 1)$ and
with the same distribution).
\end{itemize} 
The limiting process $\Delta$ is known to be a  
continuous $\delta$-self-similar 
process with stationary increments. 
It can be seen as a mixture of $\beta$-stable processes, but it is not a stable process. 

  Since these seminal papers, RWRS have been extensively studied. Far from being
exhaustive, we can cite limit theorems in higher dimension \cite{bolthausen}, 
strong approximation results and laws of the iterated 
logarithm \cite{KhoshnevisanLewis,CsakiKonigShi,CR}, limit theorems for
 correlated sceneries or walks \cite{GuillotinPrieur,CohenDombry},
large and moderate 
deviations results 
\cite{CastellPradeilles,Castell,AsselahCastell,GantertKonigShi}. Our contribution in this paper is a local version of the convergence results from \cite{KestenSpitzer}, as we make more precise in the next subsection.

\subsection{The results.} 
Our first statement is obtained in the case when the 
$\xi'$s are $\ZZ$-valued random variables.
Let $\varphi_\xi(u):=\EE[e^{iu\xi_1}]$ be the characteristic 
function of $\xi_1$. Remember that there exists an integer $d\ge 1$ 
such that $\{u\ :\ |\varphi_\xi(u)|=1\}=\frac{2\pi}d{\mathbb{Z}}$ 
($d$ is the g.c.d. of the set of $b-c$ where $b$ and $c$
belong to the support of the distribution of 
$\xi_1$)\footnote{Note that $\xi$ is said to be non-arithmetic if $d=1$.}.

\noindent Our first result concerns the case $\alpha>1$~:  
\begin{thm} {\bf Lattice case, $\alpha >1$.}
 \\
\label{thmTLL}
Assume that $\alpha \in (1,2]$ and $\beta\in (0,2]$.  
Let $C(x)$ be the continuous function defined by 
$$C(x):=  \EE\left[|L|_\beta^{-1}f_\beta(|L|^{-1}_\beta x)\right] \quad 
\text{for all }x\in \RR,$$
where $|L|_\beta:= \left(\int_\RR L_1^\beta(y)\, dy\right)^{1/\beta}$. 
Then, for every $x \in \RR$, we have  $0 < C(x) <\infty$ and

$\bullet$ if ${\mathbb P}\left(n\xi_1-\floor{n^\delta x}\notin d{\mathbb Z}\right)=1$, 
then ${\mathbb P} \pare{Z_n= \floor{n^{\delta}x}}=0$;

$\bullet$ if ${\mathbb P}\pare{n\xi_1-\floor{n^\delta x}\in d{\mathbb Z}}=1$, 
then
$$ {\mathbb P} \pare{
Z_{n}=\floor{n^\delta x}}= d \frac{C(x)}{n^{\delta}}+o(n^{-\delta}) \, ,
$$
where the $o(n^{-\delta})$ is uniform in $x$.
\end{thm}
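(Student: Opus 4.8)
The plan is to compute $\PP(Z_n = \lfloor n^\delta x\rfloor)$ through Fourier inversion, conditioning on the random walk $S$ (equivalently on the local times $N_n(\cdot)$). Writing $a_n := \lfloor n^\delta x\rfloor$ and using that $Z_n = \sum_y \xi_y N_n(y)$ with the $\xi_y$ i.i.d.\ and independent of $S$, we have, on the event that $a_n$ lies in the appropriate coset (the second bullet), the exact formula
\begin{equation*}
\PP(Z_n = a_n) = \frac{d}{2\pi}\, \EE\!\left[\int_{-\pi/d}^{\pi/d} e^{-iu a_n} \prod_y \varphi_\xi(u N_n(y))\, du\right].
\end{equation*}
(The first bullet is immediate: if $n\xi_1 - a_n \notin d\ZZ$ a.s.\ then $Z_n - a_n$ cannot vanish, since $Z_n \equiv n\xi_1 \pmod{d}$ in distribution of residues — more precisely $Z_n$ lives in the coset determined by the support of $\xi_1$ and $n$.) The natural change of variables is $u = n^{-1/\alpha} v$, scaling to match the diffusive normalization of the walk; then $u a_n = n^{-1/\alpha}\lfloor n^\delta x\rfloor v \to n^{(1-1/\alpha)(\ldots)}$, and one tracks that $u N_n(y)$ becomes, after the further rescaling $y = n^{1/\alpha}z$, of order $n^{-1/\alpha} \cdot n^{1-1/\alpha} L_1(z) = n^{\delta - 1/\beta}\cdot(\text{something})$; the exponents are arranged precisely so that $n^\delta \PP(Z_n = a_n)$ has a finite nonzero limit.

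The key steps, in order: \textbf{(1)} Establish the exact Fourier formula above and reduce to analyzing the integral $I_n := \EE[\int e^{-iua_n}\prod_y \varphi_\xi(uN_n(y))\,du]$ over $|u| \le \pi/d$. \textbf{(2)} Split the $u$-integral into a central region $|u| \le n^{-1/\alpha}A$ (for large fixed $A$, eventually $A\to\infty$) and a remainder. On the central region, perform the substitution $u = n^{-1/\alpha}v$ and use the expansion $\varphi_\xi(t) = \exp(-c|t|^\beta(1+o(1)) + \text{imaginary part})$ valid as $t\to 0$ (from membership in the normal domain of attraction of $\mathcal S_\beta$), to show
\begin{equation*}
\prod_y \varphi_\xi(n^{-1/\alpha}v\, N_n(y)) \approx \exp\!\left(-n^{-\beta/\alpha}|v|^\beta \textstyle\sum_y N_n(y)^\beta \cdot (A_1 + iA_2\,\mathrm{sgn}(v))\right).
\end{equation*}
\textbf{(3)} Invoke the functional convergence $n^{-1/\alpha}S_{\lfloor n\cdot\rfloor} \Rightarrow U$ together with convergence of local times: $n^{-(1-1/\alpha)}N_n(\lfloor n^{1/\alpha}z\rfloor) \to L_1(z)$ in an appropriate (e.g.\ $L^\beta$, a.s.\ along a coupling) sense, so that $n^{-\beta/\alpha}\sum_y N_n(y)^\beta = n^{-\beta/\alpha}\sum_y N_n(y)^\beta \to \int_\RR L_1(z)^\beta\,dz = |L|_\beta^\beta$ after the correct power of $n$ is extracted; check the exponent bookkeeping gives exactly $n^\delta$. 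Then the central integral converges to $\EE[\int_\RR e^{-i(\text{limit of }ua_n)v}e^{-|v|^\beta|L|_\beta^\beta(A_1+iA_2\mathrm{sgn} v)}dv]$, which by the Fourier inversion formula for $f_\beta$ equals $2\pi\,\EE[|L|_\beta^{-1}f_\beta(|L|_\beta^{-1}x)] = 2\pi\,C(x)$. \textbf{(4)} Control the tail $n^{-1/\alpha}A \le |u| \le \pi/d$: here one uses that $\prod_y|\varphi_\xi(uN_n(y))|$ is small with high probability — because the walk has many distinct sites where $N_n(y)$ takes moderate values, so the product of many factors each $<1$ decays; this requires a uniform bound like $|\varphi_\xi(t)| \le e^{-c|t|^\beta}$ for $|t|\le \epsilon$ and $|\varphi_\xi(t)| \le \rho < 1$ for $\epsilon \le |t| \le \pi/d$, combined with a lower bound on the number of well-visited sites (range estimates for $S$). \textbf{(5)} Verify $0 < C(x) < \infty$ and continuity: finiteness from $f_\beta$ bounded, positivity from $f_\beta > 0$ on $\RR$ (strict stability) and $|L|_\beta \in (0,\infty)$ a.s., continuity by dominated convergence. \textbf{(6)} Uniformity in $x$: since $\lfloor n^\delta x\rfloor$ only enters through the oscillatory factor $e^{-iua_n}$ which is bounded in modulus by $1$, all the estimates in steps (2)--(4) are uniform in $x$, and $C$ is uniformly continuous (it vanishes at $\infty$), giving the uniform $o(n^{-\delta})$.

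The main obstacle I anticipate is step (4), the tail estimate on $n^{-1/\alpha}A \le |u| \le \pi/d$, and more subtly the transition zone where $|u|$ is moderately large: one needs that $\prod_y \varphi_\xi(uN_n(y))$ is integrably small \emph{uniformly} and with enough decay to beat the $n^\delta$ normalization, which forces a careful quantitative lower bound on $\#\{y : N_n(y) \ge 1\}$ or on $\sum_y N_n(y)^\beta$ that holds with overwhelming probability, handling separately the rare event that the walk is too concentrated. The recurrence of $S$ when $\alpha > 1$ is what makes the range large enough, but turning this into the required exponential-in-$n$ probability bounds (so that the bad event contributes $o(n^{-\delta})$ even with the trivial bound $\PP(Z_n = a_n) \le 1$) is the delicate technical core. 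A secondary difficulty is justifying the interchange of limit and expectation in step (3) — the exponent $n^{-\beta/\alpha}\sum_y N_n(y)^\beta$ must be shown uniformly integrable, and its convergence to $|L|_\beta^\beta$ in distribution must be upgraded or coupled so that the conditional Fourier integrals converge; here one may use Skorokhod representation for the joint convergence of $(n^{-1/\alpha}S_{\lfloor n\cdot\rfloor}, \text{local times})$ to $(U, L)$ together with a dominated-convergence argument using the Gaussian-type bound from step (2).
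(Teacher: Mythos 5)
Your overall framework --- Fourier inversion, a central region whose contribution produces $C(x)$ via local-time convergence, and a tail that must be shown negligible --- matches the paper's strategy, and your treatment of the first bullet, of $0<C(x)<\infty$, and of uniformity in $x$ is essentially the same. However there are two problems, one of which is a genuine gap that would make the argument fail as proposed.

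The first is a scaling error that is repairable but indicates the exponent bookkeeping is off. You propose the substitution $u = n^{-1/\alpha}v$, but then $u\,N_n(y) \approx n^{1-2/\alpha}\,v\,L_1(y/n^{1/\alpha})$, which does not stay of order one, and $n^{-\beta/\alpha}\sum_y N_n(y)^{\beta}$ has no finite nonzero limit: $\sum_y N_n(y)^{\beta}$ grows like $n^{\delta\beta}$, and $\delta \ne 1/\alpha$ in general. The correct scale for the central region is $|u|\lesssim n^{-\delta}$. In the paper, Lemma~\ref{le13} uses the \emph{random} substitution $v = t\,V_n^{1/\beta}$ with $V_n := \sum_y N_n(y)^{\beta}$, which cleanly yields $2\pi\, n^{\delta}V_n^{-1/\beta}f_{\beta}(n^{\delta}x\,V_n^{-1/\beta})$, and then invokes the convergence in distribution of $n^{\delta}V_n^{-1/\beta}$ to $|L|_{\beta}^{-1}$ together with a uniform-integrability bound (Lemma~\ref{lem:borne}).

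The second problem is the serious one: your step (4) only works in the strongly non-lattice case, which is precisely why the paper states Theorem~\ref{nonlattice} separately. In the lattice case $|\varphi_{\xi}(t)| = 1$ for every $t \in (2\pi/d)\mathbb{Z}$, so $|\varphi_{\xi}(t)|\le\rho<1$ is valid only when $t$ is bounded away from $(2\pi/d)\mathbb{Z}$, not merely when $|t|\ge\varepsilon_0$. The arguments you feed into $\varphi_{\xi}$ are $u\,N_n(y)$, which are typically much larger than $\pi/d$, and even if $|u|\in[\varepsilon_0,\pi/d]$ all the values $u\,N_n(y)$ can be close to multiples of $2\pi/d$, yielding $|\varphi_{\xi}(u N_n(y))|\approx 1$ at every visited site. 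A lower bound on the range $\#\{y : N_n(y)\ge 1\}$ or on $V_n$ cannot rule this resonance out: the obstruction is arithmetic, not quantitative. What is actually needed is a proof that, for each fixed $u$ in the tail regime, a positive density of sites $y$ satisfies $d\bigl(u N_n(y),\,(2\pi/d)\mathbb{Z}\bigr)\ge$ (a small threshold). This is exactly what Proposition~\ref{sec:step3}, via Lemmas~\ref{sec:p_n}--\ref{sec:lem0}, achieves by a combinatorial surgery on the trajectory: one produces many ``peaks'' over a large set of well-separated sites $Y_i$; conditionally, these peaks turn $N_n(Y_i+MN)$ into an independent binomial variable, and a binomial is shown to fall in the ``good'' part of the mod-$(2\pi/d)/u$ decomposition of $\mathbb{Z}$ with probability bounded away from zero. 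You correctly flagged step (4) as the delicate core, but the tools you propose there (range and $V_n$ bounds, $|\varphi_{\xi}|\le\rho$ away from $0$) do not address the lattice resonances, so the theorem does not follow from the plan as written.

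Finally, a minor remark on your exact Fourier formula: the factor $\mathbf{1}_{\{\text{$a_n$ in the right coset}\}}$ and the restriction of the integral to $(-\pi/d,\pi/d)$ both come from the algebraic identity $\varphi_{\xi}(2k\pi/d + u) = \varphi_{\xi}(2\pi/d)^{k}\varphi_{\xi}(u)$ and from $\sum_y N_n(y)=n$, as in Lemma~\ref{formule1}; your formula is correct but needs this justification, since $\sum_y N_n(y)$ being equal to $n$ (and not just some random quantity) is what makes the coset condition deterministic.
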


\noindent \textbf{Remark.} There is no other alternative for the law of $\xi_1$.
Indeed, let $b$ be in the support of $\xi_1$.
Then $n\xi_1$ belongs to $nb+d\mathbb Z$.
Hence the condition $n\xi_1-\lfloor n^\delta x\rfloor\in d\mathbb Z$ 
is equivalent to $\lfloor n^\delta x\rfloor-nb\in d\mathbb Z$.

\noindent Our second result concerns the case $\alpha<1$~: 
\begin{thm} {\bf Lattice case, $\alpha <1$.}\\
\label{thmTLL2}
Assume that $\alpha \in (0,1)$, $\beta\in (0,2]$ and $x\in{\mathbb R}$. 
Let $D(x):=rf_\beta(rx)$, with $r:=\EE[\widetilde N_\infty^{\beta-1}(0)]^{-1/\beta}$. Then  
 
$\bullet$ if ${\mathbb P}\pare{ n\xi_1-\floor{n^{\frac{1}{\beta}} x}
\notin d{\mathbb Z}} =1$, 
then ${\mathbb P} \pare{Z_n= \floor{n^{\frac{1}{\beta}}x}}=0$;

$\bullet$ if ${\mathbb P}\pare{n\xi_1-\floor{n^{\frac{1}{\beta}} x}\in d{\mathbb Z}}=1$, 
then
$$ {\mathbb P} \pare{
Z_{n}=\floor{n^{\frac{1}{\beta}} x}}=d \frac{D(x)}{n^{\frac{1}{\beta}}}+o(n^{-\frac{1}{\beta}}) \, ,
$$
where the $o(n^{-\frac{1}{\beta}})$ is uniform in $x$;
\end{thm}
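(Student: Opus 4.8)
The plan is to combine Fourier inversion with conditioning on the random walk. The first bullet is elementary: fixing $b$ in the support of $\xi_1$, every increment $\xi_{S_k}$ lies in $b+d\ZZ$, so $Z_n=\sum_{k=0}^{n-1}\xi_{S_k}\in nb+d\ZZ$ almost surely; as in the Remark following Theorem~\ref{thmTLL}, the hypothesis $\PP(n\xi_1-\floor{n^{1/\beta}x}\notin d\ZZ)=1$ is equivalent to $\floor{n^{1/\beta}x}-nb\notin d\ZZ$, which forces $\PP(Z_n=\floor{n^{1/\beta}x})=0$. So I may assume $m:=\floor{n^{1/\beta}x}\in nb+d\ZZ$. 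Fourier inversion on this coset together with the substitution $u=tn^{-1/\beta}$ gives
\begin{equation}
\label{eq:invTLL2}
n^{1/\beta}\,\PP(Z_n=m)=\frac{d}{2\pi}\int_{-\pi n^{1/\beta}/d}^{\pi n^{1/\beta}/d}e^{-itm/n^{1/\beta}}\,\EE\!\cro{e^{itZ_n/n^{1/\beta}}}\,dt ,
\end{equation}
and conditioning on $S$ (using $Z_n=\sum_y\xi_yN_n(y)$, with $(\xi_y)$ i.i.d.\ and independent of $S$) I would rewrite $\EE\cro{e^{itZ_n/n^{1/\beta}}}=\EE\big[\prod_y\varphi_\xi(tN_n(y)/n^{1/\beta})\big]$.

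Next I would identify the candidate limit. Since $\alpha<1$, the walk $S$ is transient, and the central input — which I would establish as in \cite{KestenSpitzer} by splitting $N_n(y)$ into its forward and backward local times and applying the ergodic theorem to the two-sided walk — is the almost sure convergence $\frac1n\sum_yN_n(y)^\beta\to\EE[\widetilde N_\infty^{\beta-1}(0)]=r^{-\beta}$. Combined with the normal-domain-of-attraction expansion $\varphi_\xi(v)=1-|v|^\beta(A_1+iA_2\text{sgn}(v))+o(|v|^\beta)$ as $v\to0$ and the elementary bound $\max_yN_n(y)=O(\log n)$ a.s.\ (so that, for fixed $t$, all arguments $tN_n(y)/n^{1/\beta}$ are eventually uniformly small), this yields $\prod_y\varphi_\xi(tN_n(y)/n^{1/\beta})\to\exp\!\big(-|t|^\beta r^{-\beta}(A_1+iA_2\text{sgn}(t))\big)=\phi(t/r)$ a.s., hence $\EE[e^{itZ_n/n^{1/\beta}}]\to\phi(t/r)$ for every $t$. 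As $\phi(\cdot/r)$ is the characteristic function of $r^{-1}Y(1)$ and the latter has density $rf_\beta(r\,\cdot)=D(\cdot)$, the candidate limit of \eqref{eq:invTLL2} is $\frac{d}{2\pi}\int_\RR e^{-itx}\phi(t/r)\,dt=dD(x)$.

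To turn this into a limit theorem I would split the integral in \eqref{eq:invTLL2} at $|t|=T$. On $\{|t|\le T\}$ I would use the classical bound $|\varphi_\xi(v)|\le e^{-c|v|^\beta}$ valid for $|v|\le\eta$: on the event $A_n$ on which $\max_yN_n(y)\le K\log n$ and $\#\{y:N_n(y)\ge1\}\ge c'n$ (of probability tending to $1$), one has $\va{\prod_y\varphi_\xi(tN_n(y)/n^{1/\beta})}\le e^{-c|t|^\beta\frac1n\sum_yN_n(y)^\beta}\le e^{-cc'|t|^\beta}$, using $\sum_yN_n(y)^\beta\ge\#\{y:N_n(y)\ge1\}$ when $\beta\le1$ and $\sum_yN_n(y)^\beta\ge\sum_yN_n(y)=n$ when $\beta\ge1$; on $A_n^c$ the integrand is at most $1$ times a vanishing probability. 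Dominated convergence then gives $\int_{-T}^Te^{-itm/n^{1/\beta}}\EE[e^{itZ_n/n^{1/\beta}}]\,dt\to\int_{-T}^Te^{-itx}\phi(t/r)\,dt$, and this convergence is uniform in $x$ because $|e^{-itm/n^{1/\beta}}|=1$, $|m/n^{1/\beta}-x|\le n^{-1/\beta}$, and $\phi(\cdot/r)\in L^1(\RR)$.

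The \emph{main obstacle} is the remaining range $\{T\le|t|\le\pi n^{1/\beta}/d\}$: there the arguments $tN_n(y)/n^{1/\beta}$ are not small, and at the (rare) sites where $N_n(y)$ is a large integer they may come close to $\tfrac{2\pi}{d}\ZZ$, where nothing controls $|\varphi_\xi|$ away from $1$. I would circumvent this by retaining in the product only the sites visited exactly once: with $\mathcal{V}_n:=\{y:N_n(y)=1\}$, $\va{\prod_y\varphi_\xi(tN_n(y)/n^{1/\beta})}\le|\varphi_\xi(t/n^{1/\beta})|^{\#\mathcal{V}_n}$, and, $S$ being transient, $\#\mathcal{V}_n\ge c''n$ with probability tending to $1$ for some $c''>0$ (in fact $\#\mathcal{V}_n/n\to\PP(\text{no return to }0)^2$ in probability). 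On $\{\#\mathcal{V}_n\ge c''n\}$ I would split once more: for $T\le|t|\le\eta n^{1/\beta}$ the bound $|\varphi_\xi(v)|\le e^{-c|v|^\beta}$ gives $|\varphi_\xi(t/n^{1/\beta})|^{\#\mathcal{V}_n}\le e^{-cc''|t|^\beta}$, whose integral over $|t|\ge T$ tends to $0$ as $T\to\infty$; for $\eta n^{1/\beta}\le|t|\le\pi n^{1/\beta}/d$ the span property $\{u:|\varphi_\xi(u)|=1\}=\tfrac{2\pi}{d}\ZZ$ gives $\rho:=\sup_{\eta\le|u|\le\pi/d}|\varphi_\xi(u)|<1$ (here it is essential that $|u|$ stays strictly below $2\pi/d$, so there is no boundary effect near $\pm\pi/d$), whence $|\varphi_\xi(t/n^{1/\beta})|^{\#\mathcal{V}_n}\le\rho^{c''n}$ and this piece is bounded by $n^{1/\beta}\rho^{c''n}\to0$. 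Adding the ($x$-independent) term $\PP(\#\mathcal{V}_n<c''n)$, the whole tail is bounded by a quantity $\varepsilon(n,T)$ with $\limsup_n\varepsilon(n,T)\to0$ as $T\to\infty$. Letting $n\to\infty$ and then $T\to\infty$ in \eqref{eq:invTLL2} yields $n^{1/\beta}\PP(Z_n=\floor{n^{1/\beta}x})\to dD(x)$, and since every estimate above is uniform in $x\in\RR$ (the dependence on $x$ enters only through the unimodular factor $e^{-itm/n^{1/\beta}}$ and the bounded continuous function $D$), the error term is $o(n^{-1/\beta})$ uniformly in $x$, as claimed.
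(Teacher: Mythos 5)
Your argument takes a genuinely different route from the paper for the difficult part --- the tail $\acc{T\le|t|\le\pi n^{1/\beta}/d}$ of the Fourier integral. Instead of the paper's ``flat peak'' construction (Section 3, adapting the peak surgery from Section 2), you discard all sites with $N_n(y)\neq 1$ and retain only the singly-visited sites $\mathcal{V}_n:=\acc{y:N_n(y)=1}$, for which $tN_n(y)/n^{1/\beta}=t/n^{1/\beta}$ stays in $(-\pi/d,\pi/d)\setminus\{0\}$ and hence away from $\tfrac{2\pi}{d}\ZZ\setminus\{0\}$ over the whole integration range; this yields $\va{\prod_y\varphi_\xi(tN_n(y)/n^{1/\beta})}\le\va{\varphi_\xi(t/n^{1/\beta})}^{\#\mathcal{V}_n}$, which is clean and conceptually simpler than the peak construction. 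The small-$|t|$ analysis (a.s.\ convergence of $V_n/n$ as in \cite{KestenSpitzer}, dominated convergence, identification of the limit $dD(x)$, and $x$-uniformity via $|e^{-itm/n^{1/\beta}}-e^{-itx}|\le|t|\,n^{-1/\beta}$) is fine.

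There is, however, a concrete gap in the treatment of the bad event $\{\#\mathcal{V}_n<c''n\}$. After the change of variables $u=t/n^{1/\beta}$, the $t$-integration interval has length of order $n^{1/\beta}$, so on that event the tail integral contributes up to a constant times $n^{1/\beta}\,\PP(\#\mathcal{V}_n<c''n)$ --- not merely $\PP(\#\mathcal{V}_n<c''n)$, as your bookkeeping suggests. You therefore need $\PP(\#\mathcal{V}_n<c''n)=o(n^{-1/\beta})$, and ``$\#\mathcal{V}_n/n\to\PP(\text{no return to }0)^2$ in probability'' supplies no rate. Unlike the range $R_n$, for which the Appendix proves exponential concentration via subadditivity and blocking, the functional $\#\mathcal{V}_n$ is neither monotone nor subadditive in $n$: a site singly visited up to time $n$ may be revisited later, and a site visited once in each of two adjacent time-blocks is singly visited within each block but doubly visited overall, so $\#\mathcal{V}_{n+m}\not\le\#\mathcal{V}_n+\#\mathcal{V}_m\circ\theta_n$. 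Establishing the needed lower-tail bound (it must beat $n^{-1/\beta}$ for every $\beta\in(0,2]$, hence at least $n^{-1/2}$) requires a genuine argument --- e.g.\ via cut points of the transient walk or higher-moment estimates. This is precisely what the paper's flat-peak device buys for free: the peak indicators depend on disjoint length-$T$ blocks of increments, hence are i.i.d.\ Bernoulli variables, and Chernoff's inequality gives $\PP(\D_n^c)=o(e^{-cn})$, which easily absorbs the $n^{1/\beta}$ factor. If you supply a suitable concentration estimate for $\#\mathcal{V}_n$, your approach goes through and is arguably cleaner; as written it has a gap at this step.
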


\noindent Finally we get the local limit theorem when 
$\xi$ is strongly nonlattice, i.e. when $\displaystyle\limsup_{\vert u\vert\rightarrow +\infty}
\vert \varphi_\xi(u)\vert<1$.

\begin{thm} \label{nonlattice} {\bf Strongly nonlattice case.}
\begin{itemize} 
\item If $\alpha > 1$ and $\beta \in (0,2]$, then for all $a$, $b\in \RR$ such that $a < b$, 
\[ \lim_{n \rightarrow \infty} 
n^{\delta}  \PP\cro{Z_{n} \in [{n^\delta x} +a; n^\delta x + b]}=
C(x) (b-a) \, .
\] 
\item If $\alpha < 1$ and $\beta \in (0,2]$, then for all $a$, $b\in \RR$ such that $a < b$,
\[ \lim_{n \rightarrow \infty} n^{\frac 1 \beta} 
\PP\cro{Z_{n} \in [ n^{\frac 1 \beta  } x +a; n^{\frac 1 \beta}  x + b]}=
D(x) (b-a) \, .
\]
\end{itemize} 
\end{thm}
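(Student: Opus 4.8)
The plan is to deduce Theorem~\ref{nonlattice} from a local limit theorem for smooth test functions, by the classical smoothing and sandwiching argument used for non-lattice local limit theorems. To treat both cases simultaneously, set $a_n:=n^\delta$ and $K(x):=C(x)$ when $\alpha>1$, and $a_n:=n^{1/\beta}$ and $K(x):=D(x)$ when $\alpha<1$. The core step is to prove that, for every $g\in C^\infty_c(\RR)$,
\[
 a_n\,\EE\!\left[g(Z_n-a_n x)\right]\ \xrightarrow[n\to\infty]{}\ K(x)\int_{\RR} g(y)\,dy,
\]
uniformly in $x\in\RR$. Granting this, the theorem follows in the usual way: for $a<b$ and $\varepsilon>0$, choose $g^-\le {\bf 1}_{[a,b]}\le g^+$ in $C^\infty_c(\RR)$ with $\int g^\pm$ within $\varepsilon$ of $b-a$ (smoothed trapezoids), apply the above convergence to $g^\pm$, let $\varepsilon\to0$, and use that $0<K(x)<\infty$ by Theorem~\ref{thmTLL} (resp.\ Theorem~\ref{thmTLL2}).

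First I would set up a Fourier inversion. As $g\in C^\infty_c(\RR)$, its Fourier transform $\widehat g$ is Schwartz, and, by Fubini and conditioning on the random walk $S$ (equivalently on the family of local times $(N_n(y))_y$), using $Z_n=\sum_y\xi_y N_n(y)$ and the independence of the scenery,
\[
 a_n\,\EE\!\left[g(Z_n-a_nx)\right]
 \;=\;\frac1{2\pi}\int_{\RR} \widehat g\!\Big(\frac{u}{a_n}\Big)\,e^{-iux}\,
 \EE\!\Big[\prod_{y}\varphi_\xi\!\Big(\frac{u}{a_n}N_n(y)\Big)\Big]\,du ,
\]
where $\varphi_\xi$ is the characteristic function of $\xi_1$. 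I would split this integral at $|u|=\eta a_n$ for a small fixed $\eta>0$. On $\{|u|\le\eta a_n\}$, the argument is the one already carried out in the proofs of Theorems~\ref{thmTLL} and~\ref{thmTLL2}: the bound $|\varphi_\xi(t)|\le e^{-c|t|^\beta}$ near $0$ (from $\xi_1$ being in the normal domain of attraction of $\mathcal S_\beta$), together with the strong non-lattice bound recalled below, yields a domination of $u\mapsto\EE\big[\prod_y|\varphi_\xi(\frac u{a_n}N_n(y))|\big]{\bf 1}_{\{|u|\le\eta a_n\}}$ by a fixed integrable function; and the integrand converges pointwise in $u$, since $\EE[\prod_y\varphi_\xi(\frac u{a_n}N_n(y))]=\EE[e^{iuZ_n/a_n}]$ converges, by \eqref{eq1.08} and its $\alpha<1$ analogue recalled in the introduction, to $\EE[e^{iu\Delta(1)}]=\EE[\phi(u\,|L|_\beta)]$ (resp.\ to $\phi(u/r)$). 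Dominated convergence then shows that the low-frequency part converges to $\widehat g(0)$ times the inverse Fourier transform at $x$ of $u\mapsto\EE[\phi(u|L|_\beta)]$, which is the density of $\Delta(1)$, i.e.\ $C(x)$ (resp.\ of $r^{-1}Y(1)$, i.e.\ $D(x)$); since $\widehat g(0)=\int g$ and the domination is independent of $x$, the low-frequency part tends to $C(x)\int_\RR g$ (resp.\ $D(x)\int_\RR g$), uniformly in $x$.

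The high-frequency part $\{|u|\ge\eta a_n\}$ is where the \emph{strong} non-lattice hypothesis enters decisively. It implies that $\rho:=\sup_{|t|\ge\eta}|\varphi_\xi(t)|<1$: indeed $|\varphi_\xi|$ is continuous, $\limsup_{|t|\to\infty}|\varphi_\xi(t)|<1$ by hypothesis, and $|\varphi_\xi(t)|=1$ forces $t=0$ (otherwise $\xi_1$ would be lattice, making $|\varphi_\xi|$ periodic with supremum $1$, contradicting the hypothesis). Hence, for every $u$ with $|u|\ge\eta a_n$ and every site $y$ visited by $S$ (so $N_n(y)\ge1$, whence $|\frac u{a_n}N_n(y)|\ge\eta$),
\[
 \Big|\prod_y\varphi_\xi\!\Big(\frac u{a_n}N_n(y)\Big)\Big|\ \le\ \rho^{\,R_n},
 \qquad R_n:=\#\{y:\ N_n(y)\ge1\}
\]
being the range of $S$ up to time $n$. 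Bounding $|\widehat g|$ by its sup and integrating, the modulus of the high-frequency part is at most $\tfrac{1}{2\pi}\,a_n\,\|\widehat g\|_{L^1}\,\EE[\rho^{R_n}]$, so it remains to prove $a_n\,\EE[\rho^{R_n}]\to0$. One writes $\EE[\rho^{R_n}]\le\PP[R_n\le r_n]+\rho^{r_n}$ for a threshold $r_n$ of order $n^{1/\alpha}$ up to a slowly varying factor (when $\alpha>1$), resp.\ of order $n$ (when $\alpha<1$), and concludes using the almost-sure growth $R_n\sim\kappa n^{1/\alpha}$, resp.\ $R_n\sim\kappa n$ with $\kappa>0$, together with an exponential-type lower-deviation (confinement) estimate for $R_n$.

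The step I expect to be the main obstacle is precisely this control of $\EE[\rho^{R_n}]$ when $\alpha>1$. One cannot take $r_n=\varepsilon n^{1/\alpha}$ with $\varepsilon>0$ fixed, because $\PP[R_n\le\varepsilon n^{1/\alpha}]$ tends to the \emph{positive} small-ball probability of the range of the limiting stable process, hence does not beat the factor $a_n=n^\delta$; instead one must let the proportion tend to $0$ slowly (e.g.\ $r_n=n^{1/\alpha}/\log n$) and combine this with a lower-deviation bound for the range that decays faster than any power of $n$ — such a decay being needed because $\delta$ (resp.\ $1/\beta$ when $\alpha<1$) can be large when $\beta$ is close to $0$. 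When $\alpha<1$ the situation is easier: the walk is transient, $R_n$ is of order $n$, and a fixed-proportion large-deviation bound $\PP[R_n\le\varepsilon n]\le e^{-cn}$ suffices. Apart from this, the Fourier inversion, the low-frequency domination, and the identification of the limit with $C(x)$ or $D(x)$ are elementary or already available from the proofs of Theorems~\ref{thmTLL} and~\ref{thmTLL2}.
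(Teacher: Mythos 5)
Your overall strategy -- smooth a test function, invert Fourier, and split the integral into a low-frequency and a high-frequency part -- is the same as the paper's, which instead of compactly supported smooth $g$'s uses Polya's kernel and Durrett's criterion (a cosmetic difference). The high-frequency part is where you are genuinely cleaner: by splitting at $|t| = \eta$ fixed (rather than at $|t| = n^{-1+1/\alpha+\varepsilon}$ as in the paper), you get for free that \emph{every} visited site contributes a factor $\le \rho := \sup_{|s|\ge\eta}|\varphi_\xi(s)| < 1$, hence the bound $\rho^{R_n}$; the lower-deviation estimate you correctly flag as the missing ingredient, $\PP[R_n \le \EE[R_n]\,n^{-\gamma}] = \mathcal{O}(e^{-Cn^\gamma})$, is precisely Lemma~\ref{lem:omega'_n} of the appendix, so this part is in fact unproblematic -- not the ``main obstacle.''

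The genuine gap is in the low-frequency part. You claim that for $|u|\le\eta a_n$ (i.e.\ $|t|\le\eta$ after normalization) the integrand is dominated by a fixed integrable function, ``the argument being already carried out in the proofs of Theorems~\ref{thmTLL} and~\ref{thmTLL2}.'' This is not correct. Those proofs control the range $|t|\le n^{-1+\frac1\alpha+\varepsilon}$ (Propositions~\ref{lem:equivalent}--\ref{sec:step2}), a \emph{shrinking} interval, by explicit estimates on the event $\Omega_n$ -- not by exhibiting an $n$-independent dominating function. For the remaining range $n^{-1+\frac1\alpha+\varepsilon}\le |t|\le\eta$, the small-argument bound $|\varphi_\xi(s)|\le e^{-\sigma|s|^\beta}$ fails at sites $y$ with $N_n(y)\ge\varepsilon_0/|t|$, and your $\rho^{R_n}$ bound also fails because sites with $N_n(y)=1$ have $|tN_n(y)|<\eta$. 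What actually closes this range in the nonlattice case (Section~4 of the paper) is a counting step specific to this setting: on $\Omega_n$ and for $|t|\ge n^{-1+\frac1\alpha+\varepsilon}$, one writes $n=\sum_x N_n(x)\le\frac{\varepsilon_0}{|t|}R_n + N_n^*\,\#\{x: |tN_n(x)|\ge\varepsilon_0\}$ to deduce $\#\{x: |tN_n(x)|\ge\varepsilon_0\}\ge\frac12 n^{\frac1\alpha-\eta}$, whence $\prod_x|\varphi_\xi(tN_n(x))|\le\rho^{\frac12 n^{1/\alpha-\eta}}$. This argument is new to the nonlattice proof (the lattice proofs of Theorems~\ref{thmTLL}--\ref{thmTLL2} use instead the much more delicate ``peaks'' surgery) and is absent from your proposal. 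Without it, the claimed domination over $|t|\le\eta$ -- or any direct treatment of the range $n^{-1+\frac1\alpha+\varepsilon}\le |t|\le\eta$ -- is not established.
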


\noindent On the one hand, these results give some qualitative information about the behaviour of $Z$. For instance the transience of the process $Z$ is easily deduced (with Borel-Cantelli Lemma) when $\beta<1$. Note that since $Z$ is not a Markov chain, the recurrence property when $\beta> 1$ does not directly follow from the above local limit theorems. However this can be proved by using an argument from ergodic theory (see \cite{KS}). Indeed, it is enough to remark that when $\beta\in (1,2]$, the random variables $\xi_{S_k}, k\in\mathbb{N}$ form an ergodic and stationary sequence of integrable and centered random variables.

\noindent On the other hand this work was motivated by the study of random walks on randomly oriented lattices. In the simplest case, one should think to the simple random walk defined on a random sublattice of the oriented lattice $\ZZ^2$, which is constructed as follows. On each horizontal line, one removes all edges oriented to the right with probability $1/2$ or those oriented to the left with probability $1/2$, and so independently on each level. Then it is known, and not difficult to see, that the first coordinate of the resulting random walk is closely related to a random walk in random scenery $Z=\sum_k \xi_{S_k}$, with $S$ the simple random walk on $\ZZ$ and the $\xi_y$ i.i.d random variables with geometric distribution (see Section 5 or \cite{GPLN2} for more explanations). 
In \cite{GPLN2} it was conjectured that the probability of return to the origin of this random walk is equivalent to a constant times $n^{-5/4}$. Here we prove a local limit theorem for even more general random walks, giving in particular a proof of this conjecture.  We refer the reader to Section 5 for more precise statements of our results.

\subsection{ Outline of the proof.} Let us give a very rough description 
of the proofs for RWRS. To fix ideas, we do it for $x=0$ and $\alpha >1$. By Fourier
inverse transform, we have to study the asymptotic behavior of 
\begin{equation}
\label{integ}
 \int \EE\cro{e^{itZ_n}} \, dt = \int \EE \cro{\prod_{y \in \Z} \varphi_{\xi}(t
N_n(y))} \, dt \, .
\end{equation} 
For $t$ such that $t N_n(y)$ is small, only the behavior of $\varphi_{\xi}$
around 0 is relevant. Therefore, for $|t| \leq (\sup_y N_n(y))^{-1} \simeq
n^{-1+1/\alpha}$,  
$$\EE \cro{\prod_{y \in \Z} \varphi_{\xi}(t N_n(y))}
\simeq \EE \cro{\exp(- |t|^{\beta} \sum_y N_n(y)^{\beta} (A_1+iA_2\text{sgn}(t)))}.$$
Now, $\sum_y N_n(y)^{\beta}$ is of order $n^{\beta \delta}$, and 
a change of variable $t \leadsto n^{\delta} t$ leads to the 
dominant part in the integral \refeq{integ}. 

For $t \geq (\sup_y N_n(y))^{-1} \simeq
n^{-1+1/\alpha}$, the behavior of $\varphi_{\xi}$ away from $0$ comes 
into play. 
In the strongly nonlattice case, one can find $\epsilon_0 >0$ and $\rho \in (0,1)$
such that $|\varphi_{\xi}(t)| \leq \rho$ for $|t| \geq \epsilon_0$, so that
for $|t| \geq n^{-1+1/\alpha}$, 
\[ 
\va{\prod_{y \in \Z} \varphi_{\xi}(t N_n(y))}
\leq \rho^{\# \acc{y; N_n(y) \geq \frac{\epsilon_0}{t}}}
\leq  \rho^{\# \acc{y; N_n(y) \geq \epsilon_0 n^{1-1/\alpha}}}
\, .
\] 
It is easily seen that there is a large number of points visited at least
$n^{1-1/\alpha}$ times, leading to the result.

The lattice case is more delicate, since in this case $|\varphi_{\xi}(t)|
=1$ for $t \in \frac{2\pi}{d} \ZZ$, so that the inequality 
$\va{\varphi_{\xi}(tN_n(y)) } \leq \rho$ is only valid for the $y$
such that $d(tN_n(y);  \frac{2\pi}{d} \ZZ) \geq \epsilon_0$. Thus, the main
difficulty is to show that for $|t| \geq n^{1-1/\alpha}$, there are
a lot of such sites. This is done by a surgery on the trajectories of the
random walk. 
%We stress the fact that this last argument is the main 
%drawback to obtain the result for random walks in higher dimension.

\noindent Let us briefly describe now the organization of the paper. 
In the next section, we prove Theorem \ref{thmTLL}. 
In Sections 3 and 4, we sketch the proofs of Theorem \ref{thmTLL2}  and 
Theorem \ref{nonlattice}
which are  easier and follow the same lines. In Section 5, the local limit theorem for random walks evolving on randomly oriented lattices is obtained by using similar techniques as for the proof of Theorem \ref{thmTLL}.
Finally in the appendix, we prove some auxiliary results 
on the range of the random walk $S$, that we should need, 
but which could also be of independent interest. 

\section{Lattice case, $\alpha>1$: Proof of Theorem \ref{thmTLL}}
\subsection{Finiteness of $C(x)$.}
\begin{lem} 
For all $x \in \RR$,  $0 < C(x) <+\infty$.
\end{lem}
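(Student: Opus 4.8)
The plan is to work directly with the definition $C(x) = \EE\!\left[|L|_\beta^{-1} f_\beta(|L|_\beta^{-1}x)\right]$, where $|L|_\beta = \left(\int_\RR L_1^\beta(y)\,dy\right)^{1/\beta}$ and $L_1$ is the local time at time $1$ of the stable process $U$ of index $\alpha \in (1,2]$. The key preliminary observation is that, almost surely, $0 < |L|_\beta < +\infty$: finiteness follows because $L_1$ is jointly continuous, compactly supported (the process $U$ on $[0,1]$ has a bounded range a.s.), and bounded, so $\int_\RR L_1^\beta(y)\,dy \le \|L_1\|_\infty^\beta \cdot |\mathrm{range}|< \infty$; positivity is clear since $\int_\RR L_1(y)\,dy = 1$ forces $L_1$ to be strictly positive somewhere, hence on an open set by continuity. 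Thus the integrand $\omega \mapsto |L|_\beta^{-1}(\omega) f_\beta\!\left(|L|_\beta^{-1}(\omega)x\right)$ is a well-defined, strictly positive random variable (recall $f_\beta > 0$ everywhere on $\RR$ for a strictly stable density), so $C(x) > 0$ is immediate by taking expectations, provided we also show $C(x) < \infty$.

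For the upper bound I would split into two regimes according to the value of $\beta$. When $\beta > 1$ the density $f_\beta$ is bounded, say $\|f_\beta\|_\infty =: c_\beta < \infty$, so $|L|_\beta^{-1} f_\beta(|L|_\beta^{-1}x) \le c_\beta\, |L|_\beta^{-1}$, and it suffices to prove $\EE[|L|_\beta^{-1}] < \infty$, i.e. a negative-moment bound on $\int_\RR L_1^\beta(y)\,dy$. When $\beta \le 1$ the density $f_\beta$ may be unbounded near $0$, but in all cases one has the uniform bound $f_\beta(z) \le \min(\|f_\beta\|_{\mathrm{loc}}, K|z|^{-1-\beta})$ in the tails and, more usefully here, the trivial bound $u f_\beta(u x) \le \sup_{v} v f_\beta(v x)$: actually the cleanest route is to note $u\,f_\beta(ux)$ is bounded in $u$ for each fixed $x$ when $x \ne 0$ (since $f_\beta$ decays like $|\cdot|^{-1-\beta}$, $u f_\beta(ux) \le K u (u|x|)^{-1-\beta} = K|x|^{-1-\beta} u^{-\beta} \to 0$), and for $x = 0$ we are back to $\EE[|L|_\beta^{-1}]f_\beta(0)$. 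So in every case the problem reduces to the single estimate
\[
\EE\!\left[\left(\int_\RR L_1^\beta(y)\,dy\right)^{-1/\beta}\right] < \infty,
\]
together with, for $x\neq 0$, control of $\EE\big[\sup_{u>0} u f_\beta(ux)\big]$ which is finite and in fact $\le C(1)/|x|\cdot(\text{const})$ by scaling of $f_\beta$ — but I would just bound $u f_\beta(ux)$ by a deterministic constant depending on $x$ and reduce again to the displayed negative moment.

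The main obstacle is therefore the negative-moment estimate: one must show that $\int_\RR L_1^\beta(y)\,dy$ is not too small too often, equivalently that the local time does not spread out too thinly. I would obtain this from a self-similarity / small-ball argument: by $\alpha$-self-similarity of $U$, $L_t(x) \stackrel{d}{=} t^{1-1/\alpha} L_1(t^{-1/\alpha}x)$ as processes, and $\int_\RR L_t^\beta(y)\,dy$ scales accordingly; combined with the fact that $\int L_1 = 1$ and $L_1$ is supported in the range $R_1 := \sup_{[0,1]}U - \inf_{[0,1]}U$, Jensen's inequality gives $\int_\RR L_1^\beta(y)\,dy \ge R_1^{1-\beta}$ when $\beta\le 1$ and a reverse-type control when $\beta > 1$ via $\int L_1^\beta \ge (\int L_1)^\beta / (\text{something})$ is false, so instead for $\beta>1$ I would use $\int_\RR L_1^\beta \,dy \ge \|L_1\|_\infty^{\beta-1}\int L_1 = \|L_1\|_\infty^{\beta-1}$ and hence $|L|_\beta^{-1} \le \|L_1\|_\infty^{-(\beta-1)/\beta}$, reducing to a negative moment of $\|L_1\|_\infty^{-1}$, i.e. to the statement that $\sup_y L_1(y)$ has finite negative moments of all orders — a standard fact following from the known positive-moment / tail estimates for the local time of a stable process (e.g. $\PP(\|L_1\|_\infty \le \epsilon)$ decays fast as $\epsilon \to 0$, which one reads off from the continuity and strict positivity of $L_1$ together with a scaling covering argument, or cites from the literature on local times). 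Assembling: for $\beta\le 1$, $C(x) \le \EE[R_1^{(\beta-1)/\beta} f_\beta(R_1^{(\beta-1)/\beta}\cdots)]$ — here I'd instead directly bound $|L|_\beta^{-1}f_\beta(|L|_\beta^{-1}x)\le \|f_\beta\|_\infty^{(\text{interp})}$ — honestly the simplest uniform statement covering both cases is: $u f_\beta(ux)\le c(x)<\infty$ for all $u>0$ with $c$ locally bounded, hence $C(x)\le c(x)<\infty$ outright when $x\neq 0$, and $C(0)=f_\beta(0)\EE[|L|_\beta^{-1}]<\infty$ by the negative-moment bound above. This closes the proof.
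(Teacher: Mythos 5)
Your observation that for $x\neq 0$ the function $u\mapsto u f_\beta(ux)$ is bounded (using the polynomial tail decay of the stable density) is a neat shortcut that the paper does not use, and it does dispatch the finiteness of $C(x)$ for every $x\neq 0$ without any control on the negative moments of $|L|_\beta$. However, the case $x=0$ (and, in the paper's set-up, the goal of a bound uniform in $x$, which feeds into the uniform $o(n^{-\delta})$ later) still requires the key estimate $\EE[|L|_\beta^{-1}]<\infty$, and this is exactly where your argument breaks down.

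The two inequalities you propose for $\int_\RR L_1^\beta$ both go in the wrong direction. For $\beta>1$, from $L_1^\beta = L_1\cdot L_1^{\beta-1}\le L_1\,\|L_1\|_\infty^{\beta-1}$ one gets $\int L_1^\beta\le \|L_1\|_\infty^{\beta-1}$, not $\ge$; and a simple example (a tall narrow spike carrying little mass plus a long low plateau carrying the rest) shows the claimed lower bound $\int L_1^\beta \ge \|L_1\|_\infty^{\beta-1}$ is genuinely false, so the conclusion $|L|_\beta^{-1}\le \|L_1\|_\infty^{-(\beta-1)/\beta}$ does not follow. Similarly for $\beta\le 1$: Jensen with the concave map $t\mapsto t^\beta$ on the support of $L_1$ gives $\int L_1^\beta \le |\{L_1>0\}|^{1-\beta}$, again an upper bound rather than the lower bound $\int L_1^\beta\ge R_1^{1-\beta}$ you wrote. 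In effect you have swapped the roles of the two natural controlling quantities. The correct lower bounds on $\int L_1^\beta$ use the \emph{range} when $\beta>1$ (via H\"older: $1=\int L_1 \le |L|_\beta\,\lambda(\{L_1>0\})^{1-1/\beta}$, so $|L|_\beta^{-1}\le\lambda(\{L_1>0\})^{1-1/\beta}$, and then Jensen together with $\EE[\lambda(U([0,1]))]<\infty$), and use the \emph{sup} when $\beta<1$ (via $L_1\le L_1^\beta\,\|L_1\|_\infty^{1-\beta}$, so $|L|_\beta^{-1}\le \|L_1\|_\infty^{(1-\beta)/\beta}$, a \emph{positive} power, which then needs finiteness of positive moments of $\sup_y L_1(y)$; the paper obtains this from an exponential tail bound derived by a sub-additivity argument). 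Note also that in this last reduction it is a positive moment of $\sup L_1$ that is needed, not, as you assert, a negative moment coming from a small-ball estimate $\PP(\|L_1\|_\infty\le\epsilon)$. Until the inequalities are reversed and the appropriate moment facts are invoked, the proof of $\EE[|L|_\beta^{-1}]<\infty$ — and hence of $C(x)<\infty$ uniformly in $x$, including $x=0$ — has a genuine gap.
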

\begin{proof}
Let $x\in\mathbb R$.
Since $\int_\RR L_1(y)\, dy=1$ and $\beta\le 2$, we have
a.s. $\int_\RR L_1^\beta(y)\, dy \le 1+\sup_y L_1(y)^{(\beta-1)_+}$. 
Hence $\int_\RR L_1^\beta(y)\, dy$ is a.s. finite. So $C(x) > 0$.

\noindent Let us prove now that $C(x)$ is finite. First we have 
$$ C(x) \le  \Vert f_\beta\Vert_\infty \EE[\vert L\vert_\beta^{-1}].$$
Let us assume now that $\beta > 1$. By H\"older's inequality, 
$$1 = \int_\RR L_1(y) \, dy \leq |L|_\beta \left(\int_\RR {\bf 1}(L_1(y) >0) \, dy
\right)^{1-\frac 1\beta}.$$ 
Thus by using Jensen's inequality we get
\begin{eqnarray*}
C(x)&\le& \Vert f_\beta\Vert_\infty
\EE\cro{ \left(\int_\RR {\bf 1}(L_1(y) >0) \, dy\right)^{1-\frac 1\beta}} \\
&\le& \Vert f_\beta\Vert_\infty
\left(\EE\cro{ \left(\int_\RR {\bf 1}(L_1(y) >0) \, dy\right)} \right)^{1-\frac 1\beta}
= \Vert f_\beta\Vert_\infty
\left(\EE[\lambda(U([0,1]))]\right)^{1-\frac 1\beta},
\end{eqnarray*} 
where $\lambda$ denotes the Lebesgue measure on $\RR$ and $U([0,1])$ the set of points visited by $U$ before time 1.
This finishes the proof in the case $\beta >1$, 
since the last quantity is finite (see for example \cite{LGR} p.703).

\noindent Next, if $\beta=1$, then $\vert L\vert_\beta=1$ and $C(x)=f_\beta(x)<+\infty$.

\noindent Assume finally that $\beta < 1$. Then
\[ 1 = \int_{\mathbb{R}} L_1(y) \, dy \leq |L|_{\beta}^{\beta} \pare{\sup_x L_1(x)}^{1-\beta}
\, ,
\]
so that 
\[ \EE \cro{|L|_{\beta}^{-1}} 
\leq \EE \cro{\pare{\sup_x L_1(x)}^{\frac{1-\beta}{\beta}}} 
= \frac{1-\beta}{\beta} \int_0^{+\infty} t^{\frac{1}{\beta}-2} 
\PP \cro{ \sup_x L_1(x) \geq t} \, dt \, .
\]
Therefore it suffices to prove that there exists a constant $c > 0$ such
that  
\begin{equation}
\label{BSL*}
\PP \cro{\sup_x L_1(x) \geq t} \leq 2\exp(-ct) \quad \text{for all }t>0\, .
\end{equation} 
This follows from stronger results proved in \cite{Lacey}, but for sake of completeness, let us give a soft argument here.
For $a>0$, let $\tau_a := \inf \acc{t\ :\ \sup_x L_t(x) \geq a}$. The random variable $\tau_a$ is a stopping time, and
by continuity of $t \mapsto \sup_x L_t(x)$, $\sup_x L_{\tau_a}(x) =a$ 
on $\{\tau_a < \infty\}$. It follows then from the inequality 
\[ \sup_x L_{t+s}(x) \leq \sup_x L_t(x) + \sup_x (L_{t+s}(x)-L_{t}(x))\, ,
\]
 and from the strong Markov property,
 that  for any $a>0$ and $b >0$, 
\[
\PP\cro{\sup_x L_1(x) \geq a +b} 
= \PP \cro{\tau_a \leq 1\, ; \sup_x L_1(x) \geq a +b}
\leq \EE \cro{{\bf 1}_{\{\tau_a \leq 1\}} \,  
\PP_{U_{\tau_a}} \cro{ \sup_x L_1(x) \geq b}}
\,,
\]
where for any $v$, $\PP_v$ denotes the law of the process $U$ starting from $v$. 
By translation invariance, the law of  $\sup_x L_1(x)$ does not depend 
on the starting point of $U$. Therefore, for
any $a>0$ and $b >0$,
\begin{equation} 
\label{SousaddL*}
\PP\cro{\sup_x L_1(x) \geq a +b} 
\leq \PP\cro{\tau_a \leq 1} \PP\cro{\sup_x L_1(x) \geq b}
= \PP\cro{\sup_x L_1(x) \geq a} \PP\cro{\sup_x L_1(x) \geq b} \, .
\end{equation} 
Let $M >0$ be a median of  $\sup_x L_1(x)$. By \refeq{SousaddL*}, for all $t > 0$, 
\[ 
\PP\cro{\sup_x L_1(x) \geq t} \leq \PP\cro{\sup_x L_1(x) \geq M}^{\floor{t/M}} 
\leq \pare{\frac{1}{2}}^{\floor{t/M}} \, ,
\]
which ends the proof of \refeq{BSL*}.   
\end{proof}

\subsection{A first reduction.}

\begin{lem}
\label{formule1}
Let $n\ge 1$ and $x \in \ZZ$ be given.

$\bullet$ If $\PP\cro{n\xi_1-x\notin d{\mathbb Z}}=1$, then ${\mathbb P}(Z_n=x)=0$.

$\bullet$ If $\PP\cro{n\xi_1-x\in d{\mathbb Z}}=1$, then 
$${\mathbb P}(Z_n=x) = \frac d{2\pi}\int_{-\frac{\pi}d}^{\frac\pi d}
\exp(-itx) {\mathbb E}\left[\prod_{y\in{\mathbb Z}}\varphi_{\xi}(tN_n(y))\right]
\, dt \, . $$
\end{lem}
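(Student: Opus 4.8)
The plan is to use the standard Fourier-inversion (local central limit) identity for integer-valued random variables, applied conditionally on the random walk $S$. First I would condition on the whole trajectory of $S$, equivalently on the local time field $(N_n(y))_{y\in\ZZ}$, which is independent of the scenery $\xi$. Conditionally on $S$, the random variable $Z_n=\sum_y \xi_y N_n(y)$ is a finite sum of independent random variables (only finitely many $N_n(y)$ are nonzero), with conditional characteristic function $\EE[e^{itZ_n}\mid S]=\prod_{y\in\ZZ}\varphi_\xi(tN_n(y))$. The first bullet is then a parity/support observation: since each $\xi_1$ lies in $b+d\ZZ$ for $b$ in the support of $\xi_1$, the sum $\sum_y \xi_y N_n(y)$ lies in the coset $\left(\sum_y b N_n(y)\right)+d\ZZ = nb + d\ZZ$, and $n\xi_1$ also lies in $nb+d\ZZ$; hence if $x$ is not in $nb+d\ZZ$ the event $\{Z_n=x\}$ has probability $0$, which is exactly the stated condition $\PP[n\xi_1-x\notin d\ZZ]=1$.

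For the second bullet, I would invoke the classical inversion formula for a $\ZZ$-valued random variable $W$ supported on a single coset modulo $d$: namely, $\PP(W=x)=\frac{d}{2\pi}\int_{-\pi/d}^{\pi/d} e^{-itx}\,\EE[e^{itW}]\,dt$ whenever $x$ is in that coset. One first establishes this conditionally on $S$ — here $W=Z_n$ given $S$ is genuinely $\ZZ$-valued and lives in the coset $nb+d\ZZ$, so the formula applies pointwise in $\omega$ — and then takes expectation over $S$, using Fubini to interchange the (finite-length, bounded) $t$-integral with the expectation. This yields
$$
\PP(Z_n=x)=\frac{d}{2\pi}\int_{-\pi/d}^{\pi/d} e^{-itx}\,\EE\!\left[\prod_{y\in\ZZ}\varphi_\xi(tN_n(y))\right]dt,
$$
which is the claim. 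The interchange is harmless because $|\varphi_\xi|\le 1$ and the integration domain is compact, so everything is bounded; and the product over $y$ is really a finite product for each fixed $\omega$.

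The only genuine point requiring a word of care is the justification of the single-coset inversion formula itself: if $W$ takes values in $r+d\ZZ$, write $W=r+dW'$ with $W'$ integer-valued, apply the usual inversion formula $\PP(W'=k)=\frac{1}{2\pi}\int_{-\pi}^{\pi}e^{-isk}\EE[e^{isW'}]\,ds$ to $W'$, and change variables $s=dt$. Since here the coset is $nb+d\ZZ$ and, by the hypothesis $\PP[n\xi_1-x\in d\ZZ]=1$, the target $x$ lies in that same coset, the formula produces $\PP(Z_n=x)$ rather than $0$. I do not expect any real obstacle: the lemma is essentially a bookkeeping exercise combining the elementary lattice inversion formula with conditioning on $S$ and Fubini.
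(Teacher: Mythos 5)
Your proof is correct, and it takes a genuinely different route from the paper's. The paper starts from the standard inversion $\PP(Z_n=x)=\frac{1}{2\pi}\int_0^{2\pi}e^{-itx}\varphi_n(t)\,dt$, splits $[0,2\pi]$ into $d$ windows of width $2\pi/d$, and then uses the functional identity $\varphi_\xi\!\left(\frac{2k\pi}{d}+t\right)=\varphi_\xi\!\left(\frac{2\pi}{d}\right)^{k}\varphi_\xi(t)$ (which comes from $\va{\varphi_\xi(2\pi/d)}=1$) together with the deterministic identity $\sum_y N_n(y)=n$ to factor out a geometric sum $\sum_{k=0}^{d-1}e^{-2ik\pi x/d}\varphi_\xi(2\pi/d)^{kn}$ over $d$-th roots of unity; that sum equals $d$ or $0$ according to whether $n\xi_1-x\in d\ZZ$ a.s., so both bullets drop out of one computation. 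You instead observe that $Z_n$ is (unconditionally, not just conditionally on $S$) confined to the single coset $nb+d\ZZ$ with $b\in\text{supp}(\xi_1)$, dispose of the first bullet immediately, and invoke the single-coset lattice inversion formula before taking expectation over $S$; the conditioning on $S$ is really only needed to identify $\EE[e^{itZ_n}]$ as $\EE[\prod_y\varphi_\xi(tN_n(y))]$, not for the inversion itself. Both arguments rest on exactly the same structural fact — the coset confinement of $Z_n$ — but you package it by citing a standard lattice-inversion lemma, whereas the paper derives the same collapse in-line through the geometric-sum identity. The two are of comparable length and difficulty; yours is arguably more transparent conceptually, the paper's is more self-contained.
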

\begin{proof}
We have
$$ {\mathbb P}(Z_n=x)=\frac 1{2\pi}\int_0^{2\pi} \exp(-itx) \varphi_n(t)\, dt\, ,$$
where $\varphi_n$ is the characteristic function of $Z_n$
given by 
$$\varphi_n(t):=
{\mathbb E}\left[\prod_{y\in{\mathbb Z}}\varphi_{\xi}(tN_n(y))\right]\quad \text{for all } t\in \RR.$$
Notice that $e^{\frac{2i\pi\xi_1}d}={\mathbb E}[e^{\frac{2i\pi\xi_1}d}]$ almost surely.
Hence, for any integer $m\ge 0$ and any $u\in \RR$,
$$\varphi_\xi\left(\frac{2m\pi}d+u\right)
=\varphi_\xi\left(\frac{2\pi}d\right)^m\varphi_\xi(u).$$
Therefore 
\begin{eqnarray*}
 {\mathbb P}(Z_n=x)
&=&\frac 1{2\pi} \sum_{k=0}^{d-1} \int_{-\frac\pi d}^{\frac\pi d} 
\exp \pare{ -i \pare{t+\frac{2k\pi}d} x} \,  \varphi_n\left(\frac{2k\pi}d+t\right)\, dt\\
&=&\frac 1{2\pi}\sum_{k=0}^{d-1}\int_{-\frac\pi d}^{\frac\pi d}
\exp(-itx) \exp \pare{-i \frac{2k\pi}d x}
\, {\mathbb E}
\left[\prod_y\left\{\varphi_\xi\left(\frac{2\pi}d\right)^{kN_n(y)}
\varphi_\xi(tN_n(y))\right\}\right]\,
dt\\
&=&\frac 1{2\pi}\left(\sum_{k=0}^{d-1} \exp\pare{-i \frac{2k\pi}d x}
 \varphi_\xi\left(\frac{2\pi}d\right)^{kn}\right)\int_{-\frac\pi d}^{\frac\pi d}
\exp(-itx) \varphi_n(t)\, dt,
\end{eqnarray*}
since $\sum_yN_n(y)=n$.
Moreover, $\left[e^{-i \frac{2\pi}d x} \varphi_\xi\left(\frac{2\pi}d\right)^{n}\right]^{d}
=e^{-i 2\pi x} e^{2i\pi n\xi_1}=1$, thus $e^{-i \frac{2\pi}d x} \varphi_\xi\left(\frac{2\pi}d\right)^{n}$ is a
$d^{th}$ root of the unity.
Hence
\begin{eqnarray*}
\sum_{k=0}^{d-1} e^{-i \frac{2k\pi}{d} x }\varphi_\xi\left(\frac{2\pi}d\right)^{kn}
= \left\{ \begin{array}{ll} 
        d & \textrm{if } \varphi_\xi\left(\frac{2\pi}d\right)^{n} 
e^{-i \frac{2\pi}{d} x}=1 , \\ 
       0 & \textrm{otherwise.} 
\end{array} 
\right. 
\end{eqnarray*} 
Since $\varphi_\xi\left(\frac{2\pi}d\right)=e^{\frac{2i\pi\xi_1}d}$ a.s., the lemma follows. 
\end{proof}
\subsection{The event $\Omega_n$.}
\label{sec:omega_n} 
Set
\[ N_n^* := \sup_y N_n(y) \quad \textrm{and} \quad R_n := \#\{y\ :\ N_n(y)>0\} \, .
\]
\begin{lem}\label{lem:omega_n}
For every $n\ge 1$ and $\gamma > 0$, set
\[
\Omega_n=\Omega_n(\gamma) := \acc{   
R_n \le n^{\frac 1 \alpha+\gamma}\quad \textrm{and} \quad \sup_{y \ne z} 
\frac{\vert N_n(y)-N_n(z)\vert}{|y-z|^{\frac{\alpha-1}{2}}} 
\leq n^{(1-\frac 1 \alpha +\gamma)/2}}  \, .
\]
Then $\PP(\Omega_n) =1 - o(n^{-\delta})$.
Moreover, given $\eta\ge \gamma\max (\alpha/2,2(\beta-1)/\beta)$, the following also holds on $\Omega_n$: 
\begin{eqnarray}
\label{minVn}
N_n^*\le n^{1-\frac 1\alpha+\eta}\ \ \mbox{and}\ \ 
V_n:=\sum_z N_n^\beta(z) \geq \left\{ \begin{array}{ll} 
                                        n^{\delta \beta - \frac{\eta\beta}{2}} & \textrm{if }\beta>1 \\
                                        n^{\delta \beta - \eta(1-\beta)} & \textrm{if }\beta\le 1.
                                     \end{array} 
\right. 
\end{eqnarray} 
\end{lem}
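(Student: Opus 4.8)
The plan is to split the statement into its genuinely probabilistic content, namely $\PP(\Omega_n)=1-o(n^{-\delta})$, and the bounds \refeq{minVn}, which are a deterministic consequence of $\omega\in\Omega_n$ together with the identity $\sum_yN_n(y)=n$. Of the two events defining $\Omega_n$, the estimate $\PP\pare{R_n>n^{1/\alpha+\gamma}}=o(n^{-\delta})$ is one of the auxiliary bounds on the range of $S$ proved in the appendix; since $R_n$ is of order $n^{1/\alpha}$, this probability in fact decays faster than any power of $n$. So in the first part only the modulus inequality defining $\Omega_n$ requires real work.

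I would dispose of the deterministic part first. Assume $\omega\in\Omega_n$ and $\eta\ge\gamma\max(\alpha/2,2(\beta-1)/\beta)$. For the bound on $N_n^*$, pick $y_0$ with $N_n(y_0)=N_n^*$; the modulus inequality gives $N_n(z)\ge N_n^*-n^{(1-1/\alpha+\gamma)/2}\va{z-y_0}^{(\alpha-1)/2}\ge N_n^*/2$ for every integer $z$ with $\va{z-y_0}\le\rho$, where $\rho:=\pare{N_n^*/(2n^{(1-1/\alpha+\gamma)/2})}^{2/(\alpha-1)}$. Summing $N_n$ over $[y_0-\rho,y_0+\rho]$ and using $\sum_yN_n(y)=n$ gives $n\ge \rho N_n^*/2$ (the case $\rho<1$ being immediate), and solving for $N_n^*$ yields $N_n^*\le C_\alpha\,n^{1-1/\alpha+\gamma/(\alpha+1)}$; since $\gamma/(\alpha+1)<\gamma\alpha/2\le\eta$ when $\alpha\in(1,2]$, this is $\le n^{1-1/\alpha+\eta}$ for $n$ large.

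For $V_n$ I separate the two cases of the statement. If $\beta>1$, H\"older's inequality with exponents $\beta$ and $\beta/(\beta-1)$ gives $n=\sum_{y:N_n(y)>0}N_n(y)\le R_n^{1-1/\beta}V_n^{1/\beta}$, hence $V_n\ge n^\beta R_n^{-(\beta-1)}\ge n^{\beta-(1/\alpha+\gamma)(\beta-1)}=n^{\delta\beta-\gamma(\beta-1)}\ge n^{\delta\beta-\eta\beta/2}$, using the identity $\beta-(\beta-1)/\alpha=\delta\beta$ and $\eta\ge2\gamma(\beta-1)/\beta$. If $\beta\le1$, then $t\mapsto t^{\beta-1}$ is nonincreasing, so $N_n(z)^\beta\ge N_n(z)(N_n^*)^{\beta-1}$ for every $z$, whence $V_n\ge n(N_n^*)^{\beta-1}\ge n^{1+(1-1/\alpha+\eta)(\beta-1)}=n^{\delta\beta-\eta(1-\beta)}$, using the bound on $N_n^*$ just obtained, that $\beta-1\le0$, and $(1-1/\alpha)(\beta-1)+1=\delta\beta$. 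This establishes \refeq{minVn}.

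It remains to prove that $\PP\pare{\sup_{y\ne z}\va{N_n(y)-N_n(z)}\,\va{y-z}^{-(\alpha-1)/2}>n^{(1-1/\alpha+\gamma)/2}}=o(n^{-\delta})$, and this is the \emph{main obstacle}. The natural route is a high moment estimate with the correct joint scaling, namely $\EE\cro{\va{N_n(y)-N_n(z)}^{2p}}\le C_p\,n^{p(1-1/\alpha)}\va{y-z}^{p(\alpha-1)}$; this is heuristically right since $N_n(y)\approx n^{1-1/\alpha}L_1\pare{yn^{-1/\alpha}}$ and $L_1$ is $(\alpha-1)/2$-H\"older, and its real content is the cancellation that makes $N_n(y)-N_n(z)$ much smaller than $N_n(y)$ or $N_n(z)$ individually. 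Granting it, Markov's inequality gives, for each pair, $\PP\pare{\va{N_n(y)-N_n(z)}>n^{(1-1/\alpha+\gamma)/2}\va{y-z}^{(\alpha-1)/2}}\le C_p\,n^{-p\gamma}$, and a union bound over the relevant pairs is $o(n^{-\delta})$ once $p$ is large. The point that must still be handled — again an appendix-type estimate on $S$ — is to legitimize this union bound by confining, on an event of probability $1-o(n^{-\delta})$, all sites with $N_n>0$ to a window of polynomially bounded length, i.e.\ ruling out that the modulus is realized far out along a large jump of the walk. The rest is bookkeeping.
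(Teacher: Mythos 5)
Your proposal is correct and follows essentially the same route as the paper: use the appendix bound on $R_n$, confine $\sup_k|S_k|$ to a polynomial window (the paper does this via Doob's inequality and moments of $S_n$ from \cite{LGR}), apply a union bound with the moment estimate $\EE[|N_n(y)-N_n(z)|^{2m}]=\O(|y-z|^{(\alpha-1)m}n^{(1-1/\alpha)m})$ (which the paper cites from \cite{Jain-Pruitt}), and then derive \eqref{minVn} deterministically from the modulus-of-continuity bound and $\sum_y N_n(y)=n$. Your derivation of the $N_n^*$ bound is a minor variant: by summing $N_n$ over a window around the maximizer rather than walking to the nearest zero as the paper does, you avoid invoking $R_n\le n^{1/\alpha+\gamma}$ and obtain the sharper exponent $\gamma/(\alpha+1)$ instead of $\alpha\gamma/2$, at the harmless cost of an absolute constant, hence only ``for $n$ large.''
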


\begin{proof}
We prove in the appendix that for every $\gamma >0$, there exists $C>0$ 
such that  
 \[
 {\mathbb P}\left(
R_n\le \EE[R_n]n^{\gamma}\right)=1-\O(e^{-C n^{\gamma}}).
\]
Since there exists $c>0$ such that $\EE[R_n]\sim c n^{\frac 1\alpha}$
(see \cite{spitzer} p.36),
we conclude that 
\[
\PP(R_n \le n^{\frac 1 \alpha+\gamma})=1-o(n^{-\delta}).
\]

\noindent Now let us prove that 
$${\mathbb P}\left(\sup_{y\ne z} 
\frac{\vert N_n(y)-N_n(z)\vert}{|y-z|^{\frac{\alpha-1}2}}\ge \sqrt{n^{1-\frac 1 \alpha
 +\gamma}}\right)=o(n^{-\delta}).$$
According to the proof of Proposition 5.4 in \cite{LGR}, we have~:
${\mathbb E}[\vert S_n\vert^p]=\O(n^{\frac p\alpha})$, for all $p\in(1,\alpha)$. 
Then Doob's inequality gives that, for all $\delta'>\delta/p$,
$${\mathbb P}(\sup_{k=1,...,n}\vert S_k\vert\ge n^{\frac 1\alpha+\delta'})=\O(n^{-p\delta'})=o(n^{-\delta}).$$
So we can restrict ourselves to the set 
$A_n:=\{\sup_{k=1,...,n}\vert S_k\vert<n^{\frac 1\alpha+\delta'}\}$. 
But on $A_n$, if $N_n(z)>0$ then necessarily $z\in (-n^{\frac 1\alpha+\delta'},n^{\frac 1\alpha+\delta'})$. Thus 
\begin{eqnarray}
\label{eq1} 
{\mathbb P}\left(\sup_{y,z} 
\frac{\vert N_n(y)-N_n(z)\vert}{|y-z|^{\frac{\alpha-1}2}}\ge\sqrt{n^{1-\frac 1 \alpha +\gamma}};A_n\right)
\le 5n^{\frac 2\alpha+2\delta'}\sup_{y\ne z}{\mathbb P}\left(\frac
{\vert N_n(y)-N_n(z)\vert}{|y-z|^{\frac{\alpha-1}2}}\ge \sqrt{n^{1-\frac 1 \alpha +\gamma}}\right).
\end{eqnarray}
Moreover the Markov inequality gives for all $m\ge 1$:
\begin{eqnarray}
\label{eq2}
 {\mathbb P}\left(\frac{\vert N_n(y)-N_n(z)\vert}{|y-z|^{\frac{\alpha-1}2}}
 \ge \sqrt{n^{1-\frac 1 \alpha +\gamma}}\right)
\le\frac{{\mathbb E}[|N_n(y)-N_n(z)|^{2m}]}{|y-z|^{(\alpha-1)m}n^{\left(1-\frac 1 \alpha +\gamma\right)m}} \quad \text{for all }y\neq z.
\end{eqnarray}
In addition, according to \cite{Jain-Pruitt} (see the formula in the middle of page 77, 
with $m=\O(n)$, $a_m^{-1}=\O(n^{-1/\alpha})$
and $Q(z)^{-1}=\O(z^\alpha)$), we have for all $m\ge 1$,
\begin{eqnarray}
\label{eq3}
\sup_{y\ne z} \frac{{\mathbb E}[|N_n(y)-N_n(z)|^{2m}]}
         {|y-z|^{(\alpha-1)m}} =\O(n^{\left(1-\frac 1\alpha)m\right)}).
\end{eqnarray}
Thus if we take  $m>(\delta+2/\alpha+2\delta')/\gamma$, then by using \eqref{eq1}, \eqref{eq2} and \eqref{eq3}, we get 
\[
 {\mathbb P}\left(\sup_{y\ne z} 
\frac{\vert N_n(y)-N_n(z)\vert}{|y-z|^{\frac{\alpha-1}2}}\ge \sqrt{n^{1-\frac 1 \alpha +\gamma}}\right)
=\O\left(\frac{n^{\frac 2\alpha+2\delta'}}{n^{\gamma m}}\right)=o(n^{-\delta}).
\]
We now prove \eqref{minVn}, starting with the upper bound for $N_n^*$. For this let $y_0$ be 
such that $N_n(y_0)=N_n^*$,
%Since $n=\sum_{y}N_n(y)\le N_n^*R_n$, we have 
%$N_n(y_0)=N_n^*\ge n^{1-\frac 1\alpha-\gamma}$.
and let $z_0$ be the closest point to $y_0$ such that
$N_n(z_0)=0$. Then on $\Omega_n$, 
$$\vert y_0-z_0|\le
R_n\le n^{\frac 1\alpha+\gamma},$$
and thus
\begin{equation}\label{sec:majoN_n*}
 N_n(y_0)\le\sqrt{
   \vert y_0-z_0|^{\alpha-1}n^{1-\frac 1\alpha+\gamma}}
    \le \sqrt{ n^{\left(\frac 1\alpha+\gamma\right)(\alpha-1)}
n^{1-\frac 1\alpha+\gamma}}
     = n^{1-\frac 1\alpha+\frac{\alpha\gamma}2}.
\end{equation}
The desired upper bound for $N_n^*$ follows if
$\eta\ge \alpha\gamma/2$.

To prove the lower bound for $V_n$, we use the fact that
$n = \sum_y N_n(y)$. When $\beta>1$, this gives by using H\"older's inequality:
\[
n \leq \left(\sum_z N_n^\beta(z)\right)^{\frac 1 \beta} R_n^{1-\frac 1 \beta}
\le   (V_n)^{\frac 1\beta}n^{\left(\frac 1\alpha +\gamma\right)
\left(1-\frac 1\beta\right)}.
\]
Hence $V_n^{\frac 1\beta}\ge n^{\delta -\gamma\frac{\beta-1}\beta}$,
and the desired lower bound for $V_n$ follows if $2(\beta-1)\gamma\le \eta\beta$. 
When $\beta\le 1$, we write 
$$n=\sum_y N_n(y) \le V_n (N_n^*)^{1-\beta},$$
and the desired lower bound follows from the upper bound for $N_n^*$ proved just above. 
\end{proof}

%%%%%%%%%%%%%%%%%%%%%%%%%%%%%%%%%%%%%%%%%%%%%%%%%%%%%%%%%%%%%%%%%%%%%%%%%%%%%%%%%%%%%%%%%%%%%%%%%%%%%%%%%%%%%%%%%%%%%%%%%%%%%%%%%%%%%%%%
%%%%%%%%%%%%%%%%%%%%%%%%%%%%%%%%%%%%%%%%%%%%%%%%%%%%%%%%%%%%%%%%%%%%%%%%%%%%%%%%%%%%%%%%%%%%%%%%%%%%%%%%%%%%%%%%%%%%%%%%%%%%%%%%%%%%%%%%%
%
%
\subsection{Scheme of the proof.}\label{sec:scheme}
Let $\eta>0$. Set $\gamma:=\eta\beta/2$. 
We observe that $\gamma\le\eta$ and that \eqref{minVn} holds with
this choice of $(\eta,\gamma)$. We also set 
\begin{eqnarray*}
\overline{\eta}:=\left\{ \begin{array}{ll} \eta & \text{if } \beta\ge 1\\
                         \eta/\beta & \text{if } \beta <1.
\end{array} 
\right. 
\end{eqnarray*} 
By Lemmas \ref{formule1} and \ref{lem:omega_n}, we have to estimate 
$$
\frac d{2\pi}\int_{-\frac{\pi}d}^{\frac\pi d}
e^{-it \floor{n^{\delta} x}}
{\mathbb E}\left[\prod_y \varphi_\xi(tN_n(y)){\bf 1}_{\Omega_n}\right]
\, dt\, .
$$
This is done in several steps 
presented in the following propositions. 

\begin{prop}\label{lem:equivalent} 
Let $\eta \in \left(0,\frac 1{2\alpha(\beta+1)}\right)$. Then, we have
\[
\frac d {2\pi} \int_{|t| \le n^{-\delta+ \overline{\eta}}}
e^{-it \floor{n^\delta x}}  \EE \cro{\prod_y \varphi_{\xi}(tN_n(y))
   {\bf 1}_{\Omega_n} } \, dt 
= d\frac{C(x)}{n^\delta} + o(n^{- \delta}) \, ,
\]
uniformly in $x\in\mathbb R$.
\end{prop}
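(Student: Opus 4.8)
The strategy is to carry out the heuristic from the ``Outline of the proof'': in the regime $|t|\le n^{-\delta+\overline\eta}$, the argument $tN_n(y)$ of $\varphi_\xi$ is uniformly small on $\Omega_n$ (since $N_n^*\le n^{1-1/\alpha+\eta}$ there, and $1-1/\alpha+\eta+(-\delta+\overline\eta)$ is negative once $\eta$ is small — this is where the constraint $\eta<\frac1{2\alpha(\beta+1)}$ enters), so one may replace $\varphi_\xi$ by its second-order expansion near $0$, namely $\varphi_\xi(u)=\exp(-|u|^\beta(A_1+iA_2\operatorname{sgn}(u))+o(|u|^\beta))$ as $u\to0$ (valid because $\xi_1$ is in the normal domain of attraction of $\S_\beta$). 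First I would perform the change of variable $t\mapsto n^{-\delta}t$, which turns the integral into
\[
\frac d{2\pi n^\delta}\int_{|t|\le n^{\overline\eta}} e^{-i n^{-\delta}t\lfloor n^\delta x\rfloor}\,\EE\!\left[\prod_y\varphi_\xi(n^{-\delta}tN_n(y))\,{\bf 1}_{\Omega_n}\right]dt .
\]
Then the prefactor $d/(2\pi n^\delta)$ already matches, and the task reduces to showing the integral converges to $2\pi C(x)$.

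Next I would control the product. On $\Omega_n$, using $\sum_y N_n^\beta(y)=V_n$ and the uniform smallness of $n^{-\delta}tN_n(y)$, a $\log$ of the product gives
\[
\sum_y\log\varphi_\xi(n^{-\delta}tN_n(y)) = -n^{-\delta\beta}|t|^\beta V_n\,(A_1+iA_2\operatorname{sgn}(t)) + \text{(error)},
\]
where the error is $o(1)$ uniformly for $|t|\le n^{\overline\eta}$ on $\Omega_n$; quantifying this error via the normal-domain-of-attraction expansion, against $\sum_y |n^{-\delta}tN_n(y)|^\beta \le n^{(\overline\eta-\delta)\beta} \cdot (\text{something})\to 0$ plus the contribution of $N_n^*$, is the routine but slightly delicate bookkeeping. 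Now one introduces the rescaled local time: by the joint convergence $(n^{-1/\alpha}S_{\lfloor n\cdot\rfloor})\Rightarrow U$ and the continuity of $L$, one has $n^{-\delta\beta}V_n = n^{-\delta\beta}\sum_y N_n^\beta(y)\Rightarrow |L|_\beta^\beta = \int_\RR L_1^\beta(y)\,dy$ (this is the standard fact underlying Kesten–Spitzer; it is a Riemann-sum convergence of $\int (n^{-(1-1/\alpha)}N_n(\lfloor n^{1/\alpha}\cdot\rfloor))^\beta$ to $\int L_1^\beta$). Also $n^{-\delta}\lfloor n^\delta x\rfloor\to x$. Hence the integrand converges pointwise in $t$, $\PP$-a.s., to $\exp(-itx)\exp(-|t|^\beta|L|_\beta^\beta(A_1+iA_2\operatorname{sgn}(t)))$.

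To pass to the limit inside the $\EE$ and the $dt$-integral I would use dominated convergence twice. The $dt$-domination is the crux: I need an integrable-in-$t$ bound, uniform in $n$ and $x$, for $\big|\EE[\prod_y\varphi_\xi(n^{-\delta}tN_n(y)){\bf 1}_{\Omega_n}]\big|$. On $\Omega_n$ the lower bound for $V_n$ from \eqref{minVn} gives $n^{-\delta\beta}V_n\ge n^{-\eta\beta/2}$ when $\beta>1$ (resp.\ $\ge n^{-\eta(1-\beta)}$ when $\beta\le1$), which is not bounded below by a constant — so a naive bound $\exp(-c|t|^\beta)$ fails for large $|t|\lesssim n^{\overline\eta}$. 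The fix, and the \textbf{main obstacle}, is to split $|t|\le n^{\overline\eta}$ into $|t|\le A$ (for $A$ large but fixed, where one uses $n^{-\delta\beta}V_n\ge$ const with probability $\to1$, controlling the complementary event's contribution by its small probability times the trivial bound $1$) and $A\le|t|\le n^{\overline\eta}$, where one must genuinely exploit that, even though $V_n$ may be atypically small, the product $\prod_y|\varphi_\xi|$ is still small because $|\varphi_\xi(u)|\le \exp(-c|u|^\beta)$ for $|u|$ in a fixed neighbourhood of $0$ and there remain enough sites $y$ with $n^{-\delta}tN_n(y)$ of moderate size; combined with the Hölder/Jensen estimates of Lemma \ref{lem:omega_n} this yields a bound like $\exp(-c|t|^{\beta'})$ for a possibly smaller exponent $\beta'>0$, integrable over $\RR$. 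Once this uniform dominating function is in hand, dominated convergence delivers
\[
\frac d{2\pi n^\delta}\int_{|t|\le n^{\overline\eta}}(\cdots)\,dt \longrightarrow \frac d{2\pi n^\delta}\cdot 2\pi\,\EE\!\left[\int_\RR e^{-itx}e^{-|t|^\beta|L|_\beta^\beta(A_1+iA_2\operatorname{sgn}(t))}\,dt\right] = d\,\frac{C(x)}{n^\delta},
\]
using that the Fourier inverse of $u\mapsto e^{-|u|^\beta|L|_\beta^\beta(A_1+iA_2\operatorname{sgn}(u))}$ evaluated at $x$ is exactly $|L|_\beta^{-1}f_\beta(|L|_\beta^{-1}x)$ (rescaling the density $f_\beta$ of $\S_\beta$), whose $\EE$ is $C(x)$. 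Finally, the uniformity in $x$ is free: $x$ enters only through the bounded phase factor $e^{-itx}$, so all the estimates above are already $x$-independent, and the convergence $n^{-\delta}\lfloor n^\delta x\rfloor\to x$ can be made uniform after replacing $e^{-itx}$ back in — more precisely one checks $\big|\int e^{-itn^{-\delta}\lfloor n^\delta x\rfloor}(\cdots)-\int e^{-itx}(\cdots)\big|\le \int|t|\,|n^{-\delta}\lfloor n^\delta x\rfloor - x|\,|(\cdots)|\le n^{-\delta}\!\int|t|\,|(\cdots)|\to0$ using the same dominating function times $|t|$.
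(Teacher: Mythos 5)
Your high-level plan — replace $\varphi_\xi$ by its stable approximation on $\Omega_n\cap\{|t|\le n^{-\delta+\overline\eta}\}$, recognize $n^{-\delta\beta}V_n$ as converging to $|L|_\beta^\beta$, and pass to the limit — is the right idea, and the verification that $n^{-\delta}|t|N_n^*\to 0$ under the stated constraint on $\eta$ is correct. However, there is a genuine gap at the step you yourself flag as the ``main obstacle'': producing an integrable-in-$t$ dominating function, uniform in $n$. Your proposed remedy (split $|t|\le A$ from $A\le|t|\le n^{\overline\eta}$ and argue that ``there remain enough sites $y$ with $n^{-\delta}tN_n(y)$ of moderate size'' so as to extract $\exp(-c|t|^{\beta'})$) is a gesture rather than a proof. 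The lower bound for $V_n$ available on $\Omega_n$ degrades polynomially in $n$, and the suggested site-counting would essentially require redoing the hard estimates of Propositions \ref{sec:step1}–\ref{sec:step3} inside this proposition; you have not shown how to get a bound of the form $\exp(-c|t|^{\beta'})$ with $c$ and $\beta'$ independent of $n$.

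The paper avoids the domination problem entirely by a different mechanism: instead of bounding the expectation and then integrating in $t$, it bounds the integrand pathwise by an explicit quantity, performs the change of variable $v=tV_n^{1/\beta}$ \emph{inside} the expectation, and reduces everything to showing that $\sup_n\EE[n^\delta V_n^{-1/\beta}]<\infty$ — indeed, that $(n^\delta V_n^{-1/\beta})_n$ is uniformly integrable. This is the content of Lemma \ref{lem:borne}, which is proved via the Hölder inequality $n\le V_n^{1/\beta}R_n^{1-1/\beta}$ together with $\EE[R_n]=\O(n^{1/\alpha})$ when $\beta>1$, and via a subadditivity estimate for $N_n^*$ when $\beta\le 1$. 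You have not isolated this as the needed ingredient. A secondary, lesser issue: your claim that the integrand converges ``pointwise in $t$, $\PP$-a.s.'' is not correct as stated — the convergence of $n^{-\delta\beta}V_n$ to $|L|_\beta^\beta$ is in distribution, not almost sure on the original space. The paper gets around this with Skorohod's representation theorem combined with the uniform integrability just mentioned, which also yields the uniformity in $x$ via the Lipschitz bound on $z\mapsto zf_\beta(xz)$. Your sketch would need to be restructured along those lines to close the argument.
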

Recall next that the characteristic function $\phi$ of the stable distribution ${\mathcal S}_\beta$ 
%of $\left(n^{-\frac 1\beta}\sum_{i=1}^n\xi_i\right)_{n\ge 1}$ 
has the following form~: 
$$\phi(u)=e^{-|u|^\beta(A_1+iA_2 \text{sgn}(u))},$$
for some $0<A_1<\infty$, $|A_1^{-1}A_2|\le |\tan (\pi\beta/2)|$. 
It follows that the characteristic function $\varphi_\xi$ of $\xi_1$ satisfies: 
\begin{eqnarray}
\label{phi0}
1-\varphi_\xi(u)\sim |u|^\beta(A_1+iA_2 \text{sgn}(u)) \quad \textrm{when } u\to 0.
\end{eqnarray}
Therefore there exist constants $\varepsilon_0>0$ and $\sigma>0$ such that
\begin{eqnarray}
\label{majorationphi} 
\max(\vert\phi(u)\vert,\vert\varphi_\xi(u)\vert)\le
\exp\left(-\sigma|u|^\beta\right) \quad \textrm{for all }u\in [-\varepsilon_0,\varepsilon_0].
\end{eqnarray}

\noindent Since $\overline{\varphi_{\xi}(t)} =
 \varphi_{\xi}(-t)$ for every $t\ge 0$, the  following propositions 
achieve the proof of Theorem \ref{thmTLL}:

\begin{prop}\label{sec:step1}
Let $\eta$ be as in Proposition \ref{lem:equivalent}. 
Then there exists $c>0$ such that
$$
\int_{n^{-\delta + \overline{\eta}}}^{\varepsilon_0 n^{-1+\frac{1}{\alpha}-\eta}}
 {\mathbb E}\left[\prod_y \vert \varphi_\xi(tN_n(y))\vert {\bf 1}_{\Omega_n}\right]
\, dt= o(e^{-n^c}).$$
\end{prop}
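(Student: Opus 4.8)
The plan is to bound the integrand on the range $n^{-\delta+\overline\eta}\le t\le \varepsilon_0 n^{-1+1/\alpha-\eta}$ by exploiting the bound \eqref{majorationphi}. On the event $\Omega_n$ we have $N_n^*\le n^{1-1/\alpha+\eta}$, so for $t$ in the stated range and every $y$ we have $|tN_n(y)|\le \varepsilon_0 n^{-1+1/\alpha-\eta}\cdot n^{1-1/\alpha+\eta}=\varepsilon_0$, hence $tN_n(y)$ stays in the window $[-\varepsilon_0,\varepsilon_0]$ where \eqref{majorationphi} applies. Therefore
\[
\prod_y|\varphi_\xi(tN_n(y))|\le \exp\Big(-\sigma t^\beta \sum_y N_n^\beta(y)\Big)=\exp(-\sigma t^\beta V_n).
\]
The plan is then to use the lower bound for $V_n$ from \eqref{minVn} and the lower cutoff $t\ge n^{-\delta+\overline\eta}$ to show the exponent goes to $-\infty$ like a positive power of $n$.

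Concretely, I would split into the two cases $\beta>1$ and $\beta\le 1$, matching the two cases in \eqref{minVn}. For $\beta>1$: on $\Omega_n$, $V_n\ge n^{\delta\beta-\eta\beta/2}$, so for $t\ge n^{-\delta+\eta}$ (here $\overline\eta=\eta$) we get $t^\beta V_n\ge n^{-\delta\beta+\eta\beta}\cdot n^{\delta\beta-\eta\beta/2}=n^{\eta\beta/2}$. For $\beta\le 1$: $V_n\ge n^{\delta\beta-\eta(1-\beta)}$ and $\overline\eta=\eta/\beta$, so for $t\ge n^{-\delta+\eta/\beta}$ we get $t^\beta V_n\ge n^{-\delta\beta+\eta}\cdot n^{\delta\beta-\eta(1-\beta)}=n^{\eta\beta}$. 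In either case $\sigma t^\beta V_n\ge \sigma n^{c'}$ for a fixed $c'>0$ depending only on $\beta$ (e.g. $c'=\eta\beta/2$). Thus on $\Omega_n$, for every $t$ in the range, $\prod_y|\varphi_\xi(tN_n(y))|{\bf 1}_{\Omega_n}\le \exp(-\sigma n^{c'})$, and since $\PP(\Omega_n)\le 1$, the expectation is also bounded by $\exp(-\sigma n^{c'})$.

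Finally I would integrate this uniform bound over the interval, whose length is at most $\varepsilon_0 n^{-1+1/\alpha-\eta}\le \varepsilon_0$, to obtain
\[
\int_{n^{-\delta+\overline\eta}}^{\varepsilon_0 n^{-1+1/\alpha-\eta}}\EE\Big[\prod_y|\varphi_\xi(tN_n(y))|{\bf 1}_{\Omega_n}\Big]\,dt\le \varepsilon_0\, e^{-\sigma n^{c'}}=o(e^{-n^c})
\]
for any $c<c'$; choosing for instance $c=\eta\beta/4$ works. One should double check that the interval is nonempty, i.e.\ that $n^{-\delta+\overline\eta}\le \varepsilon_0 n^{-1+1/\alpha-\eta}$ for $n$ large, which follows from $-\delta+\overline\eta<-1+1/\alpha-\eta$; this is where the hypothesis $\eta\in\big(0,\tfrac1{2\alpha(\beta+1)}\big)$ (inherited from Proposition \ref{lem:equivalent}) is used, and verifying this inequality — tracking the exact exponents in both cases $\beta\gtrless 1$ against the definition $\delta=1-\frac1\alpha+\frac1{\alpha\beta}$ — is the only slightly delicate bookkeeping step; there is no real analytic obstacle, since once $tN_n(y)$ is confined to $[-\varepsilon_0,\varepsilon_0]$ the Gaussian-type decay estimate does all the work.
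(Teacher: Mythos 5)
Your proof is correct and takes essentially the same approach as the paper: on $\Omega_n$ the bound $N_n^*\le n^{1-1/\alpha+\eta}$ pushes $tN_n(y)$ into the window $[-\varepsilon_0,\varepsilon_0]$ where \eqref{majorationphi} applies, giving $\prod_y|\varphi_\xi(tN_n(y))|\le e^{-\sigma t^\beta V_n}$, and then the lower bounds on $V_n$ from \eqref{minVn} combined with $t\ge n^{-\delta+\overline\eta}$ give a uniform stretched-exponential bound. You are slightly more careful than the paper, which states the conclusion without writing out the $\beta\gtrless 1$ case split or verifying that the interval is nonempty, but the arithmetic you carry out is exactly what the paper's one-line "lower bound for $V_n$ implies..." is implicitly invoking.
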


\begin{prop}\label{sec:step2}
Let $\eta$ be as in Proposition \ref{lem:equivalent}
and let $\varepsilon\in \left(\eta,\frac{\alpha-1}
{\alpha(3+2\beta(\alpha-1))}\right)$ be given. Then there exists $c>0$ such that
\[
\int_{\varepsilon_0 n^{-1+\frac{1}{\alpha}-\eta}}^{n^{-1+\frac{1}{\alpha}+\varepsilon}}
{\mathbb E}\left[\prod_y \vert\varphi_\xi(tN_n(y))\vert
{\bf 1}_{\Omega_n}\right] \, dt
=o(e^{-n^c}) \, .
\]
\end{prop}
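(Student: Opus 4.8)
\emph{Plan.} I would prove the stronger, $t$‑uniform estimate: on $\Omega_n$ and for every $t$ in $[\varepsilon_0 n^{-1+\frac1\alpha-\eta},\, n^{-1+\frac1\alpha+\varepsilon}]$ one has $\prod_y\va{\varphi_\xi(tN_n(y))}\le e^{-n^{c''}}$ for some $c''>0$; since the interval has polynomial length this yields $o(e^{-n^{c}})$ with $c=c''/2$. Two facts about $\varphi_\xi$ drive everything. Near $0$, \eqref{majorationphi} gives $\va{\varphi_\xi(u)}\le e^{-\sigma|u|^\beta}$ for $|u|\le\varepsilon_0$. Away from $\frac{2\pi}d\ZZ$: since $|\varphi_\xi|$ is $\frac{2\pi}d$‑periodic (because $e^{2i\pi\xi_1/d}$ is a.s.\ constant) and $\{u:|\varphi_\xi(u)|=1\}=\frac{2\pi}d\ZZ$, compactness of $[-\frac\pi d,\frac\pi d]$ yields $\rho=\rho(\varepsilon_1)\in(0,1)$ with $\va{\varphi_\xi(u)}\le\rho$ whenever $d(u,\frac{2\pi}d\ZZ)\ge\varepsilon_1$; I fix $\varepsilon_1:=\min(\varepsilon_0/8,\pi/(4d))$. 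Put $G_n(t):=\#\{y:d(tN_n(y),\frac{2\pi}d\ZZ)\ge\varepsilon_1\}$ and $V_n':=\sum_{y:\,tN_n(y)\le\varepsilon_0}N_n^\beta(y)$. Bounding, respectively, the factors with $tN_n(y)>\varepsilon_0$ by $1$, and all non‑good factors by $1$, one gets always
\[
\prod_y\va{\varphi_\xi(tN_n(y))}\le\min\pare{e^{-\sigma t^\beta V_n'},\ \rho^{\,G_n(t)}}\, .
\]

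The crux is a deterministic \emph{sweeping} estimate valid on $\Omega_n$: if $tN_n^*\ge\varepsilon_0$, then $G_n(t)\gtrsim n^{\frac{\alpha-1}{2\alpha}-\frac{3\gamma}2}$. By the Hölder bound defining $\Omega_n$, consecutive local times differ by at most $n^{(1-\frac1\alpha+\gamma)/2}$, so $y\mapsto tN_n(y)$ has increments bounded by $\delta_n:=t\,n^{(1-\frac1\alpha+\gamma)/2}\le n^{-(1-\frac1\alpha)/2+\varepsilon+\gamma/2}$; with $\gamma=\eta\beta/2$ the constraints on $\eta$ and $\varepsilon$ force $\varepsilon+\gamma/2<(1-\frac1\alpha)/2$, hence $\delta_n\to0$ and in particular $\delta_n<\varepsilon_1$ for $n$ large. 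Now fix an argmax $y_0$ of $N_n$ and the largest $y_1<y_0$ with $N_n(y_1)=0$ (such $y_1$ exists since $N_n$ vanishes to the left of $\min_k S_k$): along the consecutive integers $y_1,\dots,y_0$ the function $tN_n$ increases, by jumps $\le\delta_n$, from $0$ up to $tN_n^*$, so for each level $v$ of the grid $\delta_n\ZZ\cap(0,tN_n^*)$ there is a \emph{distinct} site $y$ with $tN_n(y)\in(v-\delta_n,v]$ (the $tN_n$‑values lie in disjoint intervals). Keeping only the grid levels at distance $\ge2\varepsilon_1$ from $\frac{2\pi}d\ZZ$ — a definite fraction of all of them, because $tN_n^*\ge\varepsilon_0=8\varepsilon_1$ and $\varepsilon_1\le\pi/(4d)$ — produces $\gtrsim tN_n^*/\delta_n=N_n^*/n^{(1-\frac1\alpha+\gamma)/2}$ good sites, and since $N_n^*\ge n/R_n\ge n^{1-\frac1\alpha-\gamma}$ on $\Omega_n$, the claim follows.

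It remains to combine, for fixed $t$ in the range and $\omega\in\Omega_n$, according to whether $V_n'\ge V_n/2$. If $V_n'\ge V_n/2$, the lower bound on $V_n$ from Lemma \ref{lem:omega_n} together with the identity $(\delta-1+\frac1\alpha)\beta=\frac1\alpha$ gives, treating $\beta>1$ and $\beta\le1$ exactly as there and using $t\ge\varepsilon_0 n^{-1+\frac1\alpha-\eta}$, that $t^\beta V_n'\ge\tfrac{\varepsilon_0^\beta}2\,n^{\frac1\alpha-O(\eta)}\ge n^{1/(4\alpha)}$ for $\eta$ small, whence $\prod_y\va{\varphi_\xi(tN_n(y))}\le e^{-\sigma n^{1/(4\alpha)}}$. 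If instead $V_n'<V_n/2$, then $\sum_{y:\,tN_n(y)>\varepsilon_0}N_n^\beta(y)>0$, so some site satisfies $tN_n(y)>\varepsilon_0$, i.e.\ $tN_n^*>\varepsilon_0$; the sweeping estimate then gives $G_n(t)\gtrsim n^{c'}$ with $c'=\frac{\alpha-1}{2\alpha}-\frac{3\gamma}2$, which is positive because $\eta<\varepsilon<\frac{\alpha-1}{\alpha(3+2\beta(\alpha-1))}$, and therefore $\prod_y\va{\varphi_\xi(tN_n(y))}\le\rho^{\,G_n(t)}\le e^{-n^{c'/2}}$ for $n$ large. In both cases the product is at most $e^{-n^{c''}}$ with $c''=\min(1/(4\alpha),c'/2)$ (shrinking the exponent a little to absorb constants), uniformly in $t$; multiplying by the length of the interval completes the argument.

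The real work, and the main obstacle, is the sweeping estimate — more precisely, checking \emph{simultaneously} that $\delta_n\to0$ and that $\frac{\alpha-1}{2\alpha}-\frac{3\gamma}2$ and the exponent of $t^\beta V_n'$ remain positive powers of $n$; this balancing, together with the requirement that the right endpoint $n^{-1+\frac1\alpha+\varepsilon}$ coincide with the left endpoint of the range handled in the following proposition — where $y\mapsto tN_n(y)$ is no longer slowly varying and a genuine surgery on the random walk paths becomes unavoidable — is exactly what pins down the admissible windows for $\eta$ and $\varepsilon$. Within the present range, by contrast, no trajectory surgery is needed: the Hölder regularity of the local time on $\Omega_n$ already forces $tN_n(\cdot)$ to sweep through a polynomial number of residues modulo $\frac{2\pi}d$.
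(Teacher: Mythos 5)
Your proof is correct, and it takes a genuinely different route from the paper's. The paper stays entirely in the small-argument regime: it proves a deterministic statement (Lemma~\ref{sec:lem2b}) that on $\Omega_n$ there are at least $\bigl(\varepsilon_0/10\bigr)^{2/(\alpha-1)}\,n^{\frac{1}{\alpha}-\frac{2\varepsilon+\gamma}{\alpha-1}}$ sites $z$ with $N_n(z)$ in the window $\bigl[\tfrac{\varepsilon_0}{10}n^{1-\frac1\alpha-\varepsilon},\,\varepsilon_0 n^{1-\frac1\alpha-\varepsilon}\bigr]$; for every such $z$ and every $t\le n^{-1+\frac1\alpha+\varepsilon}$ one has $tN_n(z)\le\varepsilon_0$, so \eqref{majorationphi} alone produces the stretched-exponential decay. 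That lemma is proved by the same two devices you use — the bounded-jump intermediate value argument to locate one such site $y_0$, then the H\"older bound on $\Omega_n$ to propagate to a polynomial number of neighbours — but applied to $N_n$ at a single target level rather than to the residues of $tN_n$. You instead dichotomize on whether $V_n':=\sum_{tN_n(y)\le\varepsilon_0}N_n^\beta(y)\ge V_n/2$: in the first case you recycle the global lower bound on $V_n$ from Lemma~\ref{lem:omega_n} (essentially the Prop.~\ref{sec:step1} computation, which pays in the exponent $\tfrac1\alpha-\tfrac{3\beta\eta}{2}$ rather than the paper's sharper $\tfrac1\alpha-\beta(\eta+\varepsilon)-\tfrac{2\varepsilon+\gamma}{\alpha-1}$, but is positive under the same hypotheses); in the second case you invoke periodicity of $|\varphi_\xi|$ and the strict contraction $|\varphi_\xi|\le\rho<1$ away from $\tfrac{2\pi}{d}\ZZ$, ingredients the paper reserves for Prop.~\ref{sec:step3}. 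Your sweeping argument is sound — $tN_n$ runs from $0$ at $y_1$ to $tN_n^*>\varepsilon_0\ge 8\varepsilon_1$ at $y_0$ with increments $\le\delta_n<\varepsilon_1$, so it meets each grid level in $\delta_n\ZZ\cap(0,tN_n^*)$ at a distinct site, and the choice $\varepsilon_1=\min(\varepsilon_0/8,\pi/(4d))$ guarantees a uniform fraction (at least $1/4$, say) of those levels lies at distance $\ge 2\varepsilon_1$ from $\tfrac{2\pi}{d}\ZZ$, giving $G_n(t)\gtrsim tN_n^*/\delta_n = N_n^*/n^{(1-\frac1\alpha+\gamma)/2}\ge n^{\frac{\alpha-1}{2\alpha}-\frac{3\gamma}{2}}$. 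What this buys you is a conceptually uniform treatment of Props.~\ref{sec:step2} and~\ref{sec:step3} via the residue count $G_n(t)$, with Prop.~\ref{sec:step2} being the ``deterministic regime'' where $\delta_n\to 0$ and sweeping works on every trajectory in $\Omega_n$, and Prop.~\ref{sec:step3} the regime where this fails and one must create good residues probabilistically by surgery. What the paper's approach buys is avoiding the nonlattice ingredient $\rho$ and the case split altogether in this intermediate range, at the cost of the more finely tuned counting lemma. Do spell out (i) the periodicity of $|\varphi_\xi|$ coming from $e^{2i\pi\xi_1/d}$ being a.s.\ constant, (ii) that the ``definite fraction'' of good grid levels is indeed bounded below uniformly in $L=tN_n^*\ge 8\varepsilon_1$ (the two cases $L\le\tfrac{2\pi}{d}$ and $L>\tfrac{2\pi}{d}$ behave differently), and (iii) that the positivity of $\tfrac{\alpha-1}{2\alpha}-\tfrac{3\gamma}{2}$ and the condition $\varepsilon+\tfrac{\gamma}{2}<\tfrac{\alpha-1}{2\alpha}$ both do follow from $\eta<\varepsilon<\tfrac{\alpha-1}{\alpha(3+2\beta(\alpha-1))}$, which is a short but non-obvious check.
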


\begin{prop}\label{sec:step3}
Let $\eta$ be such that $\gamma<\min\left(\frac 1{2\alpha^2},\frac 12\frac{\alpha-1}\alpha\right)$
and let $\varepsilon\in \left((\frac{2\alpha}\beta+1)\gamma,
1-\frac 1 \alpha\right)$ be given. 
Then there exists $c>0$ such that 
$$\int_{n^{-1+\frac{1}{\alpha}+\varepsilon}}^{\frac \pi d}
{\mathbb E}\left[\prod_y \vert\varphi_\xi(tN_n(y))\vert{\bf
1}_{\Omega_n}\right] \, dt
=o(e^{-n^c}).$$
\end{prop}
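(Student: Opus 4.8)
The plan is to dominate the integrand by $e^{-n^{c'}}$ \emph{uniformly in $t$} for some $c'>c$, so that integrating over an interval of length $\le\pi/d$ still yields $o(e^{-n^{c}})$. The starting point is a pointwise bound on the product. Since $\{u:|\varphi_\xi(u)|=1\}=\frac{2\pi}{d}\ZZ$, the modulus $|\varphi_\xi|$ is $\frac{2\pi}{d}$-periodic and continuous, so for a small enough constant $\varepsilon_0\in(0,\frac{\pi}{2d})$ the number $\rho:=\sup\{|\varphi_\xi(u)|:\operatorname{dist}(u,\frac{2\pi}{d}\ZZ)\ge\varepsilon_0\}$ satisfies $\rho<1$. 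Writing $K_n^{(a)}(t):=\#\{y:\operatorname{dist}(tN_n(y),\tfrac{2\pi}{d}\ZZ)\ge a\}$, we get $\prod_y|\varphi_\xi(tN_n(y))|\le\rho^{K_n^{(\varepsilon_0)}(t)}$. Thus everything reduces to showing that $K_n^{(\varepsilon_0)}(t)$ is a genuine power of $n$ with overwhelming probability, for all $t$ in the range.

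Next I would reduce to polynomially many values of $t$. On $\Omega_n$ one has $N_n^*\le n^{1-\frac1\alpha+\eta}$, so if $|t-t'|\le\tau$ with $\tau\, n^{1-\frac1\alpha+\eta}\le\varepsilon_0$ then $\operatorname{dist}(tN_n(y),\frac{2\pi}{d}\ZZ)\ge\varepsilon_0$ whenever $\operatorname{dist}(t'N_n(y),\frac{2\pi}{d}\ZZ)\ge 2\varepsilon_0$, i.e. $K_n^{(\varepsilon_0)}(t)\ge K_n^{(2\varepsilon_0)}(t')$ on $\Omega_n$. Covering $[n^{-1+\frac1\alpha+\varepsilon},\pi/d]$ by $O(\tau^{-1})=O(n^{1-\frac1\alpha+\eta})$ intervals of radius $\tau\asymp n^{-1+\frac1\alpha-\eta}$ centred at grid points $t_j$ and using $\rho<1$, one bounds the integral by a bounded multiple of $\max_j\EE[\rho^{K_n^{(2\varepsilon_0)}(t_j)}\mathbf 1_{\Omega_n}]$. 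Splitting on $\{K_n^{(2\varepsilon_0)}(t)<n^{c_2}\}$ it then suffices to prove, for each fixed $t$ with $n^{-1+\frac1\alpha+\varepsilon}\le t\le\pi/d$,
\[
\EE\cro{\rho^{K_n^{(2\varepsilon_0)}(t)}\mathbf 1_{\Omega_n}}\le \rho^{n^{c_2}}+\PP\pare{\{K_n^{(2\varepsilon_0)}(t)<n^{c_2}\}\cap\Omega_n}=o(e^{-n^{c}})
\]
uniformly in $t$.

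The heart of the matter — and the step I expect to be the main obstacle — is the estimate $\PP(\{K_n^{(2\varepsilon_0)}(t)<n^{c_2}\}\cap\Omega_n)=o(e^{-n^{c_3}})$, uniform over that range of $t$; this is where the surgery on trajectories enters. The good levels $\mathcal G_t:=\{m\in\NN:\operatorname{dist}(tm,\frac{2\pi}{d}\ZZ)\ge 2\varepsilon_0\}$ form a union of blocks of positive lower density $\mu=\mu(\varepsilon_0,d)>0$, spaced $\asymp 1/t$ apart, and $K_n^{(2\varepsilon_0)}(t)=\sum_{m\in\mathcal G_t}\#\{y:N_n(y)=m\}$. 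On $\Omega_n$ the profile $y\mapsto N_n(y)$ climbs from $0$ to $N_n^*\in[n^{1-\frac1\alpha-\eta},n^{1-\frac1\alpha+\eta}]$ across a window of at most $R_n\le n^{\frac1\alpha+\gamma}$ sites, with $\tfrac{\alpha-1}{2}$-H\"older increments of size $\le n^{(1-\frac1\alpha+\gamma)/2}$; in particular there is a mesoscopic window $W$ of $\gtrsim n^{\frac1\alpha-O(\eta)}$ consecutive sites on which $N_n(y)\ge\tfrac12N_n^*$, and since $t\ge n^{-1+\frac1\alpha+\varepsilon}$ forces $tN_n^*\to\infty$, the interval $[\tfrac12N_n^*,N_n^*]$ meets $\mathcal G_t$ in $\gtrsim\mu N_n^*\gg n^{c_2}$ heights. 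When the per-site increments of the profile are smaller than the period $\asymp1/t$ of $\mathcal G_t$, the profile is forced to realise $\gtrsim\mu N_n^*$ of these good heights and the bound holds deterministically on $\Omega_n$; in the complementary regime (short good blocks, e.g.\ $t$ near $\pi/d$, where the argument may reduce to a parity constraint) I would instead condition on the set of visited sites together with an excursion skeleton of the walk and resample a controlled number of steps, so as to perturb the local times of $\gtrsim n^{c_2}$ pre-selected sites of $W$ \emph{independently} into $\mathcal G_t$, each with probability bounded below; a conditional Chernoff bound then gives the super-exponential estimate. The delicate points — where the quantitative hypotheses $\gamma<\min(\tfrac1{2\alpha^2},\tfrac12\tfrac{\alpha-1}{\alpha})$ and $\varepsilon>(\tfrac{2\alpha}{\beta}+1)\gamma$ are consumed — are to choose the sites and the resampled steps so that (i) the modified trajectory still lies in $\Omega_n$, (ii) the perturbations act on disjoint portions of the path, hence are conditionally independent, and (iii) the number of usable sites is a true positive power of $n$.

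Finally I would assemble the pieces: fix $c_2>0$ small enough that the surgery estimate holds with some $c_3>0$, observe that $\rho^{n^{c_2}}=e^{-n^{c_2}\log(1/\rho)}$, set $c:=\tfrac12\min(c_2,c_3)$, and conclude that
\[
\int_{n^{-1+\frac1\alpha+\varepsilon}}^{\pi/d}\EE\cro{\prod_y|\varphi_\xi(tN_n(y))|\,\mathbf 1_{\Omega_n}}\,dt\le C\pare{\rho^{n^{c_2}}+e^{-n^{c_3}}}=o(e^{-n^{c}}).
\]
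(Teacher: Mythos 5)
Your opening reduction — the $\tfrac{2\pi}{d}$-periodicity of $|\varphi_\xi|$, the definition of $\rho<1$, and the reduction to showing that the cardinality $K_n^{(\varepsilon_0)}(t)$ of sites with $tN_n(y)$ far from $\tfrac{2\pi}{d}\ZZ$ is a power of $n$ with super-exponential probability — matches the paper's start. The discretization over a grid in $t$ is a reasonable but unnecessary addition; the paper bounds the integrand uniformly in $t$ directly. The genuine problems lie in the part you rightly flag as the main obstacle.

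\emph{The deterministic regime argument fails for most of the range.} You argue that if the per-site increments of $y\mapsto N_n(y)$ are smaller than the period $\asymp 1/t$ of $\mathcal{G}_t$, the profile is forced to realise many good heights. On $\Omega_n$ the per-site increment is controlled only by $n^{(1-\frac1\alpha+\gamma)/2}$, and the period $1/t$ is of constant order for $t$ near $\pi/d$; so for such $t$ the H\"older bound gives no control whatsoever and the deterministic argument does not apply. The regime where the increment bound beats the period is at most $t\lesssim n^{-(1-\frac1\alpha+\gamma)/2}$, a thin slice of $[n^{-1+\frac1\alpha+\varepsilon},\pi/d]$. Even there, $N_n(\cdot)$ is $\ZZ$-valued and not monotone, so "crossing forces visiting" needs an argument, not just the H\"older bound.

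\emph{The surgery construction is exactly the missing content.} Your phrase "resample a controlled number of steps so as to perturb the local times of $\gtrsim n^{c_2}$ pre-selected sites independently into $\mathcal{G}_t$" is precisely what needs to be proved, and the construction that realises it is absent. The paper introduces two specific increment patterns $\mathcal{C}^\pm$ of length $T=M+N$ (occurring with equal probability $p$), which are "peaks": an upper peak based at $y$ goes up to $y+MN$ and back, a lower one goes down to $y-MN$ and back. The key observation is that a peak based at $y$ visits $y+MN$ if and only if it is an upper peak, a Bernoulli($\tfrac12$) event. Restricting to time intervals $[kT,(k+1)T)$, and selecting $\gtrsim n^{\frac1\alpha-\alpha\gamma}$ bases $Y_i$ spaced $\geq 3NM$ apart (spacing guarantees peaks at different $Y_i$ affect disjoint sites), the paper obtains (its Lemma 2.10) that $N_n(Y_i+MN)-N_n^0(Y_i+MN)$ are i.i.d.\ Binomial$(\lfloor n^{1-\frac1\alpha-2\gamma}\rfloor,\tfrac12)$ and independent of $(N_n^0(Y_i+MN))_i$. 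Your sketch gestures at points (i)--(iii), but those points are the theorem, and the concrete peak mechanism that makes (ii) hold is not in your proposal.

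\emph{The anti-concentration step is entirely missing.} Even granted the binomial perturbation, you still must show the perturbed value lands in the good set with probability bounded below, \emph{uniformly} over $t\in[n^{-1+\frac1\alpha+\varepsilon},\pi/d]$. This is the paper's Lemma 2.11: after centring and scaling, Berry--Esseen compares the binomial to a Gaussian and bounds the probability of the bad intervals $J_k$ (of rescaled length involving $n^{-\frac1{\alpha\beta}+\frac{2\alpha\gamma}{\beta}}/t$) by $o(1)$. The quantitative hypotheses $\gamma<\frac12\frac{\alpha-1}{\alpha}$ and $\varepsilon>(\frac{2\alpha}{\beta}+1)\gamma$ are consumed precisely here; without the Berry--Esseen step, the conditional Chernoff bound you invoke has no base probability to run on. In short, the outer reduction is fine, but the heart of the proof — the explicit peak construction guaranteeing conditional independence and the uniform-in-$t$ anti-concentration over the good set — is the hard part, and your proposal postpones both.
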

To end the proof of Theorem \ref{thmTLL}, we observe that there exists 
$(\eta,\varepsilon)$ satisfying all the hypotheses of these propositions
(by taking $\eta>0$ small enough and
$\varepsilon<\frac{\alpha-1} {\alpha(3+2\beta(\alpha-1))}$ large enough).
\subsection{Proof of Proposition
\ref{lem:equivalent}.}
Remember that $V_n=\sum_{z\in\mathbb Z}N_n^\beta(z)$.
We start by a preliminary lemma.
\begin{lem}\label{lem:borne}
If $\beta>1$, then 
$$\sup_n {\mathbb E}\left[ \pare{\frac{n^{\delta}}{V_n^{\frac{1}{\beta}}}}
^{\frac \beta  {\beta-1}}\right]<+\infty.$$
If $\beta\le  1$, then for all $p\ge 1$, 
$$\sup_n  
{\mathbb E}\left[\left(\frac{n^\delta}{V_n^{\frac 1 \beta}}\right)^p\right]<+\infty.$$
\end{lem}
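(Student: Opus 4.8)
The plan is to prove the two moment bounds in Lemma~\ref{lem:borne} by a single computation based on the lower bound for $V_n$ on $\Omega_n$ from Lemma~\ref{lem:omega_n}, together with the estimate $\PP(\Omega_n^c)=o(n^{-\delta})$ and a crude deterministic bound for $V_n$ off $\Omega_n$. The key point is that $n^\delta/V_n^{1/\beta}$ is bounded by a fixed power of $n$ for free (indeed $V_n\ge N_n^{*\beta}\ge 1$ on the event $\{Z_n\text{ nontrivial}\}$, and more simply $V_n\ge (n/R_n)^\beta R_n\ge$ something like $n^{\beta}R_n^{1-\beta}\ge c$), so the small-probability event $\Omega_n^c$ contributes a negligible amount to any fixed moment, and on $\Omega_n$ the quantity is genuinely of order $\le n^{\eta'}$ for an $\eta'$ we can take as small as we like.

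Concretely, fix $p\ge 1$ (in the case $\beta>1$ take $p=\beta/(\beta-1)$). First I would write
$$
\EE\left[\left(\frac{n^\delta}{V_n^{1/\beta}}\right)^p\right]
=\EE\left[\left(\frac{n^\delta}{V_n^{1/\beta}}\right)^p{\bf 1}_{\Omega_n}\right]
+\EE\left[\left(\frac{n^\delta}{V_n^{1/\beta}}\right)^p{\bf 1}_{\Omega_n^c}\right].
$$
For the first term, apply Lemma~\ref{lem:omega_n}: choosing $\eta$ (hence $\gamma=\eta\beta/2$) small enough, on $\Omega_n$ one has $V_n\ge n^{\delta\beta-\eta\beta/2}$ when $\beta>1$ and $V_n\ge n^{\delta\beta-\eta(1-\beta)}$ when $\beta\le 1$; in both cases $n^\delta/V_n^{1/\beta}\le n^{\eta''}$ with $\eta''=\eta/2$ (resp. $\eta''=\eta(1-\beta)/\beta$) which tends to $0$ as $\eta\to 0$. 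So the first term is at most $n^{p\eta''}$, and one must simply observe that this is bounded in $n$ — here is the only subtlety: the statement asks for a bound \emph{uniform in $n$}, so I would note that the parameter $\eta$ in Lemma~\ref{lem:omega_n} may be chosen once and for all depending only on $p$ (and $\alpha,\beta$), taking it small enough that $p\eta''<$ anything, e.g. $\eta''=0$ is not attainable but any fixed positive $\eta$ gives a power of $n$, which is \emph{not} bounded.

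So the real argument must be sharper: rather than a power of $n$, one wants $n^\delta/V_n^{1/\beta}$ to be $O(1)$ on a set of overwhelming probability. The cleanest route is to use the almost sure convergence / tightness coming from the functional limit theorem: by \eqref{eq1.08} and the continuous mapping theorem (applied to the functional $L\mapsto\int L_1^\beta$, with the a.s. finiteness established in the first lemma of Section~2), $V_n/n^{\delta\beta}=\sum_z (N_n(z)/n^{1-1/\alpha})^\beta \cdot n^{-1/\alpha}$ converges in distribution to $|L|_\beta^\beta=\int_\RR L_1^\beta(y)\,dy>0$ a.s.; hence $n^\delta/V_n^{1/\beta}\Rightarrow |L|_\beta^{-1}$, which is a.s. finite. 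To upgrade convergence in distribution of a positive random variable to uniform integrability of a power, I would prove a matching \emph{lower tail} estimate for $V_n$: there exist $c_1,c_2>0$ with $\PP(V_n\le \varepsilon n^{\delta\beta})\le c_1\exp(-c_2\varepsilon^{-c_3})$ for small $\varepsilon$, for instance by combining the upper bound $R_n\le n^{1/\alpha+\gamma}$ with $n=\sum_z N_n(z)$ to force many sites with large local time, or by invoking the exponential concentration of the range and of $N_n^*$ proved in the appendix. Given such a tail bound, $\EE[(n^\delta/V_n^{1/\beta})^p]=p\int_0^\infty t^{p-1}\PP(n^\delta/V_n^{1/\beta}>t)\,dt\le C+p\int_C^\infty t^{p-1}\PP(V_n\le n^{\delta\beta}t^{-\beta})\,dt$, and the integrand decays faster than any power of $t$ uniformly in $n$, giving the claimed uniform bound. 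The main obstacle is precisely establishing this uniform lower-tail control on $V_n$ (equivalently, uniform non-degeneracy of $\int L_1^\beta$ at the discrete level); once that is in hand, both cases $\beta>1$ and $\beta\le 1$ follow from the same Fubini/layer-cake computation, the only difference being which power $p$ is needed.
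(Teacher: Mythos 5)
Your proposal contains a genuine gap that you yourself flag: everything hinges on a uniform lower-tail estimate of the form $\PP(V_n\le \varepsilon n^{\delta\beta})\le c_1\exp(-c_2\varepsilon^{-c_3})$, and you do not prove it, only gesture at how one might. That estimate is not straightforward: it would require quantitative control on how $V_n$ can fail to be of order $n^{\delta\beta}$, which is delicate precisely because $V_n$ is a nonlinear functional of the whole local-time field. Your first attempt (split on $\Omega_n$ and bound by $n^{p\eta''}$) you correctly reject, but the repair you offer is not carried out, so the proof as written is incomplete.

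What you have missed is that the paper's proof requires no probabilistic tail estimate on $V_n$ at all. Instead one uses deterministic pointwise inequalities to reduce the moment bound to much simpler quantities. For $\beta>1$, H\"older applied to $n=\sum_y N_n(y)$ gives $n\le V_n^{1/\beta}R_n^{1-1/\beta}$, whence
\[
\left(\frac{n^{\delta}}{V_n^{1/\beta}}\right)^{\beta/(\beta-1)} \le \frac{R_n}{n^{1/\alpha}}
\]
(using $\delta-1=-\frac{\beta-1}{\alpha\beta}$), and then $\sup_n\EE[R_n]/n^{1/\alpha}<\infty$ is a first-moment fact from \cite{LGR}. For $\beta\le 1$, the analogous elementary bound is $n\le V_n(N_n^*)^{1-\beta}$, giving $n^{\delta}/V_n^{1/\beta}\le (N_n^*/n^{1-1/\alpha})^{(1-\beta)/\beta}$, and then one shows $\sup_n\EE[(N_n^*/n^{1-1/\alpha})^p]<\infty$ for every $p$ via the subadditivity $N^*_{n+m}\le N_n^*+N_m^*\circ\theta_n$, which yields a geometric tail $\PP(N_n^*\ge k\lfloor t n^{1-1/\alpha}\rfloor)\le (2/3)^k$. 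This is conceptually lighter than your route: it replaces a lower-tail estimate on the complicated functional $V_n$ by an upper-tail estimate on $N_n^*$ (or just a mean of $R_n$), for which subadditivity gives exactly what is needed, uniformly in $n$. You did briefly invoke the inequality $V_n\ge (n/R_n)^\beta R_n = n^\beta R_n^{1-\beta}$ in passing, which for $\beta>1$ is precisely the key step; had you pursued it and raised to the right power, the $\beta>1$ case would have closed immediately.
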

\noindent A direct consequence of this lemma is that the sequence $(n^\delta V_n^{-\frac 1 \beta},n\ge 1)$ 
is uniformly integrable.
\begin{proof}
We start with the case $\beta>1$. We already observed in the proof of Lemma \ref{lem:omega_n} that for every $n\geq 1$,
$$n\le V_n^{\frac 1 \beta} R_n^{1-\frac 1 \beta}.$$
But it is proved in \cite{LGR} Equation (7.a) that $\EE[R_n]=\O( n^{\frac 1 \alpha})$. The result follows. \\*
We suppose now that $\beta \le 1$. Since we have
\begin{equation}
\label{BS}  n = \sum_x N_n(x) \leq V_n (N_n^{*})^{1-\beta} \, ,
\end{equation} 
we get 
\begin{equation}
\label{BSa}
\frac{n^{\delta}}{V_n^{1/\beta}} 
\leq \pare{\frac{N_n^*}{n^{1-\frac{1}{\alpha}}}}^{\frac{1}{\beta}-1} \, .
\end{equation} 
We use next the fact that $N_n^*$ is a subadditive functional:
\begin{equation}
\label{subbaddN*}
N^*_{n+m} \leq N_n^* + N_m^* \circ \theta_n \, ,
\end{equation} 
where 
$$N_m^* \circ \theta_n :=\sup_x \sum_{k=0}^{m-1} {\bf 1}_{\{S_{n+k}=x\}}
= \sup_x \sum_{k=0}^{m-1} {\bf 1}_{\{S_{n+k}-S_n=x\}}\, ,$$
is independent of 
$\sigma(S_0,\cdots, S_{n-1})$. Moreover, $0 \leq N^*_{n+1} - N^*_n \leq 1$. 
Therefore, we can prove in exactly the
same way as for the range (see \eqref{SousaddRange} in the appendix), that 
\begin{equation} 
\label{subadd}
 {\mathbb P} \pare{ N_n^* \geq a +b } \leq {\mathbb P} \pare{ N_n^* \geq a } 
{\mathbb P}\pare{ N_n^* \geq b} \quad \text{for all } a,\, b \in \NN \, .
\end{equation}  
Now it is known (see for example \cite{Borodin3}) that $N_n^*/ n^{1-1/\alpha}$ converges
in distribution toward $\sup_x L_1(x)$.
Let $t > 0$, be such that $\PP\cro{\sup_x L_1(x) \geq t} \leq 1/2$.
%\footnote{$t$ existe si  $P(\sup_x L_x = \infty) < 1/2$. Mais on 
%doit avoir $P(\sup_x L_x = \infty) = 0$. Est-ce-que cette derniere 
%affirmation est vraie?}. 
Since 
$$\lim_{n \rightarrow \infty} {\mathbb P} \pare{N_n^* \geq \floor{t n^{1-1/\alpha}}}
\leq {\mathbb P}\pare{\sup_x L_1(x) \geq t} \leq 1/2,$$ 
we obtain that for $n$ large enough, ${\mathbb P} \pare{N_n^* \geq \floor{t n^{1-1/\alpha}}} 
\leq 2/3$. 
Hence for $n$ large enough, and all $ p\ge 1$, 
\begin{eqnarray} 
\nonumber
 {\mathbb E} \cro{\pare{\frac{N_n^*}{n^{1-1/\alpha}}}^p} 
& = & p \int_0^{\infty} x^{p-1}  {\mathbb P} \pare{N_n^* \geq x n^{1-1/\alpha}} \, dx
\leq p t^p \int_0^{\infty} u^{p-1}
{\mathbb P} \pare{N_n^* \geq t n^{1-1/\alpha} u} \, du
\\ 
\label{UI}
& \leq & p t^p \int_0^{\infty} u^{p-1}
{\mathbb P} \pare{N_n^* \geq \floor{t n^{1-1/\alpha}}}^{\floor{u}} \, du
\leq  p t^p  \int_0^{\infty} u^{p-1} 
\pare{\frac{2}{3}}^{\floor{u}} \, du \, ,
\end{eqnarray} 
where the first inequality in \refeq{UI} comes from \refeq{subadd}. 
Thus, for all $p \ge 1$, 
\begin{equation}
\label{contsup}
 \sup_n {\mathbb E} \cro{\pare{\frac{N_n^*}{n^{1-1/\alpha}}}^p} < \infty \, .
\end{equation} 
The lemma now follows from \refeq{BSa}.
\end{proof}

\noindent The next step is the
\begin{lem}\label{lem:gaussian}
Under the hypotheses of Proposition \ref{lem:equivalent}, we have
\begin{eqnarray*}
 \int_{|t| \le  n^{-\delta+ \overline{\eta}}} 
e^{-it\floor{n^{\delta }x}} \EE \cro{\left\{\prod_y \varphi_\xi(tN_n(y))
-e^{-|t|^\beta V_n(A_1+iA_2 \textrm{sgn}(t)) }
\right\}
   {\bf 1}_{\Omega_n}} \, dt
= o(n^{- \delta}) \, ,
\end{eqnarray*} 
uniformly in $x\in{\mathbb R}$, where $A_1$ and $A_2$ are the constants appearing in \eqref{phi0}. 
\end{lem}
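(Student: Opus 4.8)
The plan is to bound the integrand by its modulus, which immediately yields uniformity in $x$ since $|e^{-it\lfloor n^\delta x\rfloor}|=1$, and thus to reduce the statement to
\[
I_n := \int_{|t|\le n^{-\delta+\overline\eta}} \EE\left[\left|\prod_y\varphi_\xi(tN_n(y)) - e^{-|t|^\beta V_n(A_1+iA_2\textrm{sgn}(t))}\right|\mathbf{1}_{\Omega_n}\right]\,dt = o(n^{-\delta}).
\]
The starting point is that on $\Omega_n\cap\{|t|\le n^{-\delta+\overline\eta}\}$ one has, for every $y$,
\[
|tN_n(y)|\le n^{-\delta+\overline\eta}\,N_n^*\le n^{-\delta+\overline\eta}\,n^{1-\frac1\alpha+\eta} = n^{-\frac1{\alpha\beta}+\overline\eta+\eta}=:\varepsilon_n,
\]
using the bound on $N_n^*$ from \eqref{minVn}, and the hypothesis $\eta<\frac1{2\alpha(\beta+1)}$ forces $\overline\eta+\eta<\frac1{\alpha\beta}$, hence $\varepsilon_n\to0$. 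In particular, for $n$ large $\varepsilon_n\le\varepsilon_0$ and $\varphi_\xi$ stays bounded away from $0$ on $[-\varepsilon_n,\varepsilon_n]$, so each factor $\varphi_\xi(tN_n(y))$ with $N_n(y)>0$ has a well-defined principal logarithm.

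Next I would expand $\log\varphi_\xi$ near $0$. From \eqref{phi0} and $\log(1-z)=-z+O(|z|^2)$, and since $|u|^{2\beta}=o(|u|^\beta)$ as $u\to0$, there is a function $h$ with $h(0)=0$ and $h(u)\to0$ as $u\to0$ such that $\log\varphi_\xi(u)=-|u|^\beta(A_1+iA_2\textrm{sgn}(u))+|u|^\beta h(u)$ for $|u|$ small. Summing over the $y$ with $N_n(y)>0$ (for which $\textrm{sgn}(tN_n(y))=\textrm{sgn}(t)$) and recalling $V_n=\sum_z N_n^\beta(z)$, this gives
\[
\prod_y\varphi_\xi(tN_n(y)) = e^{-|t|^\beta V_n(A_1+iA_2\textrm{sgn}(t))}\,e^{E_n(t)}, \qquad E_n(t):=\sum_y|tN_n(y)|^\beta h(tN_n(y)),
\]
with $|E_n(t)|\le\rho_n|t|^\beta V_n$ where $\rho_n:=\sup_{|u|\le\varepsilon_n}|h(u)|\to0$. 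Then, using $|e^z-1|\le|z|e^{|z|}$ and $|e^{-|t|^\beta V_n(A_1+iA_2\textrm{sgn}(t))}|=e^{-A_1|t|^\beta V_n}$, for $n$ large enough that $\rho_n\le A_1/2$ the integrand of $I_n$ is, on $\Omega_n$, at most
\[
\rho_n|t|^\beta V_n\,e^{-(A_1-\rho_n)|t|^\beta V_n}\le\rho_n|t|^\beta V_n\,e^{-\frac{A_1}{2}|t|^\beta V_n}.
\]

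Finally I would enlarge the domain to all of $\RR$ and use the change of variable $t=V_n^{-1/\beta}s$ (legitimate since $V_n\ge N_n(0)^\beta\ge1$), getting
\[
\int_\RR\rho_n|t|^\beta V_n\,e^{-\frac{A_1}{2}|t|^\beta V_n}\,dt = \rho_n\,V_n^{-1/\beta}\int_\RR|s|^\beta e^{-\frac{A_1}{2}|s|^\beta}\,ds = \rho_n\,c_\beta\,V_n^{-1/\beta},
\]
with $c_\beta<\infty$; taking expectations, $I_n\le\rho_n\,c_\beta\,\EE[V_n^{-1/\beta}]$, and Lemma \ref{lem:borne} gives $\EE[V_n^{-1/\beta}]=O(n^{-\delta})$ (by taking $p=1$ when $\beta\le1$, and by Jensen's inequality when $\beta>1$), so that $I_n=\rho_n\,O(n^{-\delta})=o(n^{-\delta})$. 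The one delicate point is the uniform smallness $\max_y|tN_n(y)|\le\varepsilon_n\to0$ on $\Omega_n$: this is exactly what makes the error exponent $\rho_n$ uniform in $t$ and in the trajectory, and it is the reason the upper bound on $N_n^*$ in \eqref{minVn} and the restriction on $\eta$ are imposed; once that is secured, the factor $V_n^{-1/\beta}$ produced by the Gaussian-type rescaling is absorbed by Lemma \ref{lem:borne}, converting the $o(1)$ into the required $o(n^{-\delta})$.
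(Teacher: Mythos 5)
Your proposal is correct. It reaches the same target bound as the paper but by a genuinely different route: you pass to $\log\varphi_\xi$, expand near the origin, and control the remainder $E_n(t)=\sum_y |tN_n(y)|^\beta h(tN_n(y))$, whereas the paper avoids logarithms entirely by writing the difference of products as a telescoping (Abel) sum
\[
\prod_y\varphi_\xi(tN_n(y))-\prod_y\phi(tN_n(y))
=\sum_y\Bigl(\prod_{z<y}\varphi_\xi(tN_n(z))\Bigr)\bigl(\varphi_\xi(tN_n(y))-\phi(tN_n(y))\bigr)\Bigl(\prod_{z>y}\phi(tN_n(z))\Bigr),
\]
and then bounds each summand using \eqref{majorationphi} and $|\varphi_\xi(u)-\phi(u)|\le |u|^\beta h(|u|)$. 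The two estimates of the form $|\text{error}|\lesssim \rho_n |t|^\beta V_n\,e^{-c|t|^\beta V_n}$ then coincide, and both proofs finish the same way: change of variable $t\mapsto V_n^{-1/\beta}s$, producing $\rho_n\,c_\beta\, V_n^{-1/\beta}$, take expectations, and invoke Lemma \ref{lem:borne}. What the paper's telescoping buys is that it never needs the non-vanishing of $\varphi_\xi$, the choice of a branch of $\log$, or the identity $\prod_y\varphi_\xi=\exp(\sum_y\log\varphi_\xi)$, but it pays for this with the extra observation that $\sum_{z\ne y}N_n^\beta(z)\ge V_n/2$ on $\Omega_n$ for $n$ large, which your logarithmic version avoids. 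Your approach is arguably a little cleaner once one has checked the branch issue, which is unproblematic here since $\sup_y |tN_n(y)|\to 0$ on $\Omega_n$ and $\varphi_\xi(0)=1$; both are valid. One small point you should make explicit if you write this out in full: the function $h$ in your logarithmic expansion must be taken bounded and monotone (as the paper does for its $h$) so that $\rho_n=\sup_{|u|\le\varepsilon_n}|h(u)|$ is well-defined and tends to $0$; the existence of such an $h$ follows from \eqref{phi0} and the power series of $\log(1-z)$, exactly as you sketch.
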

\begin{proof}
It suffices to prove that 
\[
 \int_{|t| \le n^{-\delta+\overline{\eta}}}
 \va{\EE \cro{\prod_y \varphi_\xi(tN_n(y))  {\bf 1}_{\Omega_n}} 
 - \EE \cro{ e^{-|t|^\beta V_n(A_1+iA_2 \textrm{sgn}(t)) }{\bf 1}_{\Omega_n}}}
 \, dt = o(n^{- \delta}) \, .
 \] 
Set
$$E_n(t):=\prod_y\varphi_\xi(tN_n(y))-
\prod_y \exp\left(-|t|^\beta N_n^\beta (y)(A_1+iA_2 \textrm{sgn}(t))\right). $$
Observe that 
\begin{eqnarray*}
E_n(t) = \sum_y & & \left(\prod_{z<y}\varphi_\xi(tN_n(z))\right)
\left(\varphi_\xi(tN_n(y))- e^{-|t|^\beta N_n^\beta (y) (A_1+iA_2 \textrm{sgn}(t)) }\right)\\ 
         & \times & \left(\prod_{z>y}e^{-|t|^\beta N_n^\beta (z) (A_1+iA_2 \textrm{sgn}(t)) }\right).
\end{eqnarray*} 
But on $\Omega_n$, if $|t|\le n^{-\delta+\overline{\eta}}$, then 
\begin{eqnarray}
\label{tnnz}
|t| N_n(z)  \le n^{\eta+\overline{\eta}-\frac 1 {\alpha\beta}}.
\end{eqnarray} 
This implies in particular that $|t| N_n(z)< \varepsilon_0$ for $n$ large enough, since the hypothesis on $\eta$ implies $\eta+\overline{\eta}< 1/(\alpha\beta)$. 
Thus by using \eqref{majorationphi} we get
\begin{eqnarray*} 
|E_n(t)| \le  \sum_y\left\vert \varphi_\xi(tN_n(y))-
   \exp\left(-|t|^\beta N_n^\beta (y) (A_1+iA_2 \textrm{sgn}(t))\right)\right\vert
\exp\left(-\sigma |t|^\beta\sum_{z\ne y}N_n^\beta(z) \right),
\end{eqnarray*}
for $n$ large enough.  
Observe next that \eqref{phi0} implies 
$$\left\vert \varphi_\xi(u)-\exp\left(-|u|^\beta(A_1+iA_2\textrm{sgn}(u)\right)\right\vert\le
|u|^\beta h(\vert u\vert) \quad \textrm{for all } u\in \RR,$$ 
with $h$ a continuous and monotone function on $[0,+\infty)$ vanishing in $0$. 
Therefore by using \eqref{tnnz} we get
\begin{eqnarray*}
|E_n(t)|\le   |t|^\beta h(n^{\eta+\overline{\eta}-\frac{1}{\alpha\beta}})  \sum_y N_n^\beta(y) \exp\left(-\sigma|t|^\beta\sum_{z\ne
y}N_n^\beta(z)\right).
\end{eqnarray*}
Now on $\Omega_n$, according to \refeq{minVn} and the hypothesis on $\eta$, if $n$ is large enough, 
$$ \sum_{z\ne y}N_n^\beta(z)
\ge V_n/2\qquad \text{for all }y\in \ZZ.$$
By using this and the change of variables
$v=tV_n^{1/\beta}$, we get 
$$
\int_{|t| \le n^{-\delta+\overline{\eta}}}\mathbb{E}\left[\vert E_n(t)\vert
{\bf 1}_{\Omega_n}\right]\, dt\leq h(n^{\eta+\overline{\eta}-\frac 1 {\alpha\beta}})
\mathbb{E}[V_n^{-1/\beta}]
\int_{\mathbb{R}} \vert v\vert^\beta \exp\left(-\sigma \vert v\vert^\beta/2\right)\, dv
=o(\mathbb{E}[V_n^{-1/\beta}]),
$$
which proves the result according to Lemma \ref{lem:borne}.
\end{proof}
 
\noindent Finally Proposition \ref{lem:equivalent} follows from the

\begin{lem}\label{le13} Under the hypotheses
of Proposition \ref{lem:equivalent}, we have
\[ \frac{d}{2 \pi} \int_{|t| \le n^{-\delta + \overline{\eta}}} e^{-it \floor{n^\delta x}}
\EE \cro{ e^{-|t|^\beta V_n(A_1+iA_2 \textrm{sgn}(t)) }{\bf 1}_{\Omega_n}} \, dt
=d \frac{C(x)}{n^\delta}+ o(n^{-\delta}) \, ,
\]
uniformly in $x\in\mathbb R$. 
\end{lem}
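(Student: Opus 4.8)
The plan is to perform the change of variables $v = t\,n^{\delta}$ and identify the limit by dominated convergence, using the scaling relation between $V_n$ and the local time of the limiting L\'evy process $U$. First I would write, after substituting $t = v/n^{\delta}$,
\[
\frac{d}{2\pi}\int_{|t|\le n^{-\delta+\overline{\eta}}} e^{-it\lfloor n^\delta x\rfloor}
\EE\cro{e^{-|t|^\beta V_n(A_1+iA_2\,\mathrm{sgn}(t))}{\bf 1}_{\Omega_n}}\,dt
= \frac{1}{n^\delta}\cdot\frac{d}{2\pi}\int_{|v|\le n^{\overline{\eta}}}
e^{-iv\lfloor n^\delta x\rfloor/n^\delta}\,
\EE\cro{e^{-|v|^\beta (V_n/n^{\beta\delta})(A_1+iA_2\,\mathrm{sgn}(v))}{\bf 1}_{\Omega_n}}\,dv.
\]
So after dividing by $1/n^\delta$ it remains to show that the integral above converges to $2\pi C(x)/d \cdot (d/2\pi) = C(x)$; more precisely that
\[
\frac{d}{2\pi}\int_{\RR} e^{-ivx}\,\EE\cro{e^{-|v|^\beta |L|_\beta^\beta(A_1+iA_2\,\mathrm{sgn}(v))}}\,dv = C(x),
\]
and that we may pass to this limit. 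The first point is just the Fourier inversion formula for the density $f_\beta$: since $f_\beta$ has characteristic function $\phi(u)=e^{-|u|^\beta(A_1+iA_2\,\mathrm{sgn}(u))}$, a scaling by $|L|_\beta$ gives that $v\mapsto e^{-|v|^\beta|L|_\beta^\beta(A_1+iA_2\,\mathrm{sgn}(v))}$ is the characteristic function of a random variable with density $|L|_\beta^{-1}f_\beta(|L|_\beta^{-1}\cdot)$, and taking expectation over $L$ and applying Fourier inversion at the point $x$ yields exactly $C(x) = \EE[|L|_\beta^{-1}f_\beta(|L|_\beta^{-1}x)]$. The finiteness and positivity of $C(x)$ were already established.

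The convergence of the integral requires three ingredients. First, the convergence in distribution $V_n/n^{\beta\delta} \Rightarrow |L|_\beta^\beta = \int_\RR L_1^\beta(y)\,dy$: this follows from the functional convergence $n^{-1/\alpha}S_{\lfloor n\cdot\rfloor}\Rightarrow U$ together with the convergence of the associated local times $N_n(\lfloor n^{1/\alpha}\cdot\rfloor)/n^{1-1/\alpha}\Rightarrow L_1(\cdot)$ (as in \cite{Borodin3}, \cite{LGR}), so that $\sum_y N_n^\beta(y)/n^{\beta\delta} = n^{1/\alpha}n^{-\beta\delta}\int N_n(\lfloor n^{1/\alpha}y\rfloor)^\beta/n^{(1-1/\alpha)\beta}\ldots$ converges; note $\beta\delta = \beta - \beta/\alpha + 1/\alpha = 1/\alpha + \beta(1-1/\alpha)$, so the exponents match. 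Second, I need to replace ${\bf 1}_{\Omega_n}$ by $1$: since $\PP(\Omega_n^c) = o(n^{-\delta})$ and the integrand is bounded by $1$ over a domain of length $2n^{\overline{\eta}}$ with $\overline{\eta}<\delta$ (which holds because $\overline{\eta}\le\eta/\beta$ and $\eta$ is taken as small as we like, in particular $\eta<\beta\delta$... more simply $\overline{\eta}$ can be made smaller than $\delta$), the contribution of $\Omega_n^c$ is $o(n^{-\delta}\cdot n^{\overline{\eta}}\cdot n^{\text{small}})$... actually I must be a bit careful: it is $o(n^{\overline{\eta}})\cdot\PP(\Omega_n^c)$ after the change of variables, times $1/n^\delta$, which is $o(n^{-\delta})$ provided $\PP(\Omega_n^c) = o(n^{-\overline{\eta}})$; since $\PP(\Omega_n^c)=o(n^{-\delta})$ and $\overline{\eta}<\delta$ this is fine. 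Third, I need to justify extending the domain of integration from $|v|\le n^{\overline{\eta}}$ to all of $\RR$ and passing the limit inside: here I use the uniform bound $|e^{-|v|^\beta(V_n/n^{\beta\delta})(A_1+iA_2\,\mathrm{sgn}(v))}|\le e^{-A_1|v|^\beta V_n/n^{\beta\delta}}$, the lower bound on $V_n$ from \eqref{minVn} valid on $\Omega_n$, and — crucially for uniform integrability as $V_n/n^{\beta\delta}\to 0$ could in principle happen — Lemma \ref{lem:borne}, which gives that $n^\delta V_n^{-1/\beta}$ is uniformly integrable, hence that $\EE[e^{-A_1|v|^\beta V_n/n^{\beta\delta}}]$ is dominated by an integrable function of $v$ uniformly in $n$ (by splitting according to whether $V_n/n^{\beta\delta}$ is above or below a threshold and using Markov's inequality with the moment bound from Lemma \ref{lem:borne}).

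The uniformity in $x$ is automatic once everything is reduced to the bound on the tail $\int_{|v|>n^{\overline{\eta}}}$ and to the rate of convergence of $\EE[e^{-|v|^\beta V_n/n^{\beta\delta}(\ldots)}]\to \EE[e^{-|v|^\beta|L|_\beta^\beta(\ldots)}]$, neither of which involves $x$ (the factor $e^{-iv\lfloor n^\delta x\rfloor/n^\delta}$ has modulus $1$); the only $x$-dependence sits in the target $C(x)$, and one also uses $|\lfloor n^\delta x\rfloor/n^\delta - x|\le n^{-\delta}\to 0$ together with continuity of the limiting Fourier integral in the frequency to absorb that discrepancy, but since we integrate $e^{-ivx}$ against an $L^1$ function this replacement costs only $o(1)$ uniformly on compact $v$-ranges and is controlled on the tail by integrability. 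I expect the main obstacle to be the third ingredient: establishing the uniform (in $n$) domination needed to pass to the limit and to discard the tail $|v|>n^{\overline{\eta}}$, because $V_n/n^{\beta\delta}$ is random and can be small, so the naive bound $e^{-A_1|v|^\beta V_n/n^{\beta\delta}}$ is not deterministically integrable; this is exactly what Lemma \ref{lem:borne} (uniform integrability of $n^\delta V_n^{-1/\beta}$, and its stronger moment version) is designed to handle, and threading that estimate through the computation carefully is the crux of the argument.
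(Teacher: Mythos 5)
Your proposal is correct and reaches the same conclusion via the same basic change of variables $v=tn^\delta$, but the order of the two subsequent operations is reversed relative to the paper, which makes it a genuinely different (and equally valid) argument. The paper first evaluates the full-line $v$-integral \emph{pathwise} (i.e.\ for each realization of $V_n$), which amounts to a second change of variables $u=v\,V_n^{1/\beta}/n^\delta$ and yields exactly $2\pi W_n f_\beta(W_n x)$ with $W_n:=n^\delta V_n^{-1/\beta}$; only then does it send $n\to\infty$ inside the expectation, using Skorohod representation plus the uniform integrability of $W_n$ from Lemma~\ref{lem:borne} to upgrade weak convergence $W_n\Rightarrow |L|_\beta^{-1}$ to $L^1$ convergence, with uniformity in $x$ handled through the Lipschitz bound $\sup_{u}|f_\beta(u)+uf_\beta'(u)|<\infty$. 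The tail $|v|\ge n^{\overline{\eta}}$ is then killed on $\Omega_n$ using the deterministic lower bound on $V_n$ from \eqref{minVn}. You instead keep the $v$-integral, pass $n\to\infty$ inside $\EE$ for each fixed $v$ by weak convergence $V_n/n^{\beta\delta}\Rightarrow|L|_\beta^\beta$ (plus bounded convergence to discard $\mathbf{1}_{\Omega_n}$), and justify the interchange of $\lim_n$ with $\int dv$ by a uniform-in-$n$ dominating function obtained from the moment bound in Lemma~\ref{lem:borne} via Markov's inequality; Fourier inversion is done only at the end, on the limit. Both routes use exactly the same two ingredients --- convergence in law of $V_n/n^{\beta\delta}$ and Lemma~\ref{lem:borne} --- but deploy them differently. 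The paper's ordering makes the uniformity in $x$ and the replacement of $\lfloor n^\delta x\rfloor$ by $n^\delta x$ essentially free (the latter costs $O(n^{-2\delta+2\overline{\eta}})$ with only the trivial bound $|\EE[\cdot]|\le1$); in your version that replacement must be threaded through the dominating function, and the resulting bound $\int_{|v|\le n^{\overline\eta}}|v|\min(1,C|v|^{-\beta/(\beta-1)})\,dv$ is logarithmically divergent at $\beta=2$, so you get $O(n^{-\delta}\log n)$ rather than a power saving --- still $o(1)$, but worth flagging. Conversely, your route avoids the Skorohod/$L^1$ step and the pointwise derivative estimate on $f_\beta$, replacing them with a single dominated-convergence argument; you correctly identify that establishing the dominating function is the crux, and the Markov estimate you sketch (using $p=1/(\beta-1)$ when $\beta>1$, so $\beta p=\beta/(\beta-1)>1$, and arbitrary $p$ when $\beta\le1$) does close it.
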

\begin{proof}
Set 
$$I_{n,x}:=\int_{|t| \le n^{-\delta + \overline{\eta}}} e^{-it \floor{n^\delta x}}
 e^{-|t|^\beta V_n(A_1+iA_2 \textrm{sgn}(t)) } \, dt.$$
Since $|\floor{n^\delta x}-n^\delta x|\le 1$, for all $n$ and $x$, it is immediate that 
$$I_{n,x}=\int_{|t| \le n^{-\delta + \overline{\eta}}} e^{-it n^\delta x}
 e^{-|t|^\beta V_n(A_1+iA_2 \textrm{sgn}(t)) } \, dt + \O(n^{-2\delta+2\overline{\eta}}).$$
But $2\overline{\eta}<1/(\alpha\beta) <\delta$ by hypothesis. So actually
$$I_{n,x}=\int_{|t| \le n^{-\delta + \overline{\eta}}} e^{-it n^\delta x}
 e^{-|t|^\beta V_n(A_1+iA_2 \textrm{sgn}(t)) } \, dt + o(n^{-\delta}).$$
Next, after some changes of variables, we get: 
\begin{eqnarray}
\label{Jn}
 \int_{|t| \le n^{-\delta + \overline{\eta}}} e^{-it n^\delta x}
 e^{-|t|^\beta V_n(A_1+iA_2 \textrm{sgn}(t)) } \, dt=n^{-\delta}\left\{2\pi\frac{n^\delta}
{V_n^{1/\beta}}f_\beta\left(\frac{{n^\delta x}}{V_n^{1/\beta}}\right)
-J_{n,x}\right\},
\end{eqnarray}
where 
$$
J_{n,x}:=  \int_{|v|\ge n^{\overline{\eta}}}\!\! 
e^{-iv x}
  e^{-|v|^\beta\frac{V_n}{n^{\beta\delta}}(A_1+iA_2 \textrm{sgn}(v)) } \, dv.
$$
Now it is known that $W_n:=n^{\delta}V_n^{-1/\beta}$ 
converges in distribution, as $n\to \infty$, toward 
$W:=|L|^{-1}_\beta$ (see \cite{ChenLiRosen} Lemma 14 or \cite{KestenSpitzer} Lemma 6). 
Then by Skorohod's representation Theorem, we can find a sequence $(\widetilde{W}_n,n\geq 1)$ and $\widetilde{W}$ 
distributed respectively as $(W_n,n\geq 1)$ and $W$ such that $\widetilde{W}_n$ 
converges almost surely toward $\widetilde{W}$. Moreover, Lemma \ref{lem:borne} 
ensures that the sequence $(\widetilde{W}_n,n\ge 1)$ is uniformly integrable, so actually the convergence holds in ${\mathbb L}^1$. 
Let us deduce that 
\begin{eqnarray}
\label{WnW} 
{\mathbb E}[g_x(W_n)]={\mathbb E}[g_x(W)]
+o(1),
\end{eqnarray} 
where $g_x:z\mapsto zf_\beta(xz)$ and the $o(1)$ is uniform in $x$. First 
\begin{eqnarray*}
\left\vert{\mathbb E}[g_x(W_n)]-{\mathbb E}[g_x(W)]\right\vert
  &\le &   \sup_{x,z\in\mathbb R}\vert (g_x)'(z) \vert{\mathbb E}[\vert \widetilde{W}_n-\widetilde{W}\vert]\\
  &\le & \sup_u \vert f_\beta(u)+u f_\beta'(u)  \vert{\mathbb E}[\vert \widetilde{W}_n-\widetilde{W}\vert].
\end{eqnarray*}
But remember that 
$$f_\beta(u)=\frac 1{2\pi}\int_\RR e^{itu}e^{-\vert t\vert^
\beta(A_1+iA_2 \textrm{sgn}(t))}\, dt\, .$$
So after differentiation under the integral sign and integration by parts we get
$$uf'_\beta(u)=-\frac 1{2\pi}\int_\RR e^{itu}(1-\beta \textrm{sgn}(t) 
\vert t\vert^{\beta}
(A_1+iA_2\textrm{sgn}(t)))e^{-\vert t\vert^
\beta (A_1+iA_2 \textrm{sgn}(t))}\, dt.$$
In particular $\sup_u \vert f_\beta(u)+u f_\beta'(u)  \vert$ is finite, and this proves \eqref{WnW}.

\noindent In view of \eqref{Jn} it only remains to prove that $\EE[J_{n,x}{\bf 1}_{\Omega_n}]=o(1)$. But this follows from the basic inequality
$$
\EE[|J_{n,x}{\bf 1}_{\Omega_n}|]\le \int_{|v|\ge n^{\overline{\eta}}}
  \EE\left[e^{-A_1|v|^\beta\frac{V_n}{n^{\beta\delta}} }{\bf 1}_{\Omega_n}\right] \, dv,
$$
and from the lower bound for $V_n$ given in \eqref{minVn}. 
\end{proof}

\subsection{Proof of Proposition
\ref{sec:step1}.}

Recall that on $\Omega_n$, $N_n(y) \le n^{1-\frac{1}{\alpha} +\eta}$, for all $y \in \ZZ$. 
Hence by \eqref{majorationphi},
\[
\int_{n^{-\delta+\overline{\eta}}}^{\varepsilon_0 n^{-1+\frac{1}{\alpha}-\eta}}
 {\mathbb E}\left[\prod_y \vert\varphi_\xi(tN_n(y))\vert{\bf 1}_{\Omega_n}\right]
\, dt
\leq \int_{n^{-\delta+\overline{\eta}}}^{\varepsilon_0 n^{-1+\frac{1}{\alpha}-\eta}} {\mathbb E} 
\cro{ \exp \pare{-\sigma t^\beta V_n} {\bf 1}_{\Omega_n}} \, dt \, .
\]
But on $\Omega_n$, we can also use the lower bound for $V_n$ given in \eqref{minVn}, which implies that
$$
\int_{n^{-\delta+\overline{\eta}}}^{\varepsilon_0 n^{-1+\frac{1}{\alpha}-\eta}}
 {\mathbb E}\left[\prod_y \vert\varphi_\xi(tN_n(y))\vert{\bf 1}_{\Omega_n}\right]
\, dt
 \leq   e^{-\sigma n ^{c\eta}},$$
for some constant $c>0$, depending on $\beta$. This proves the proposition.  
\subsection{Proof of Proposition
\ref{sec:step2}.}
First note that by using again \eqref{majorationphi} we get
\begin{eqnarray}
\label{majorationphin} 
\prod_y \vert\varphi_\xi(tN_n(y))\vert
\le \exp\left(-\sigma t^\beta \sum_{z:N_n(z)\le \varepsilon_0 
n^{1-\frac 1 \alpha -\varepsilon}}N_n^\beta(z)\right) \quad \text{for all }t\le n^{-1+\frac 1 \alpha + \varepsilon}.
\end{eqnarray}
The proof will then be a consequence of the  
\begin{lem}\label{sec:lem2b}
Under the hypotheses of Proposition \ref{sec:step2},  
for $n$ large enough and on $\Omega_n$, we have 
$$\# \left\{z\ :\ \frac{\varepsilon_0}{10}n^{1-\frac 1 \alpha - \varepsilon} \le N_n(z) \le \varepsilon_0 n^{1-\frac 1 \alpha - \varepsilon}\right\}
\ge \left(\frac{\varepsilon_0}{10}\right)^{\frac{2}{\alpha-1}} n^{\frac 1 \alpha -\frac{2\varepsilon+\gamma}{\alpha-1}}.$$
\end{lem}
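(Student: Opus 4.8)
The plan is to use the H\"older-type control on the local time increments that defines $\Omega_n$, together with the fact that $N_n^*$ is large (of order $n^{1-1/\alpha}$ up to a lower-order factor) and that the visited sites form a set of size at most $n^{1/\alpha+\gamma}$, to force the local time to pass through the window $[\frac{\varepsilon_0}{10} n^{1-1/\alpha-\varepsilon}, \varepsilon_0 n^{1-1/\alpha-\varepsilon}]$ on a long interval. More precisely, on $\Omega_n$ the map $y \mapsto N_n(y)$ is $\frac{\alpha-1}{2}$-H\"older with constant at most $n^{(1-1/\alpha+\gamma)/2}$. Starting from a site $y_0$ with $N_n(y_0) = N_n^*$ (which is $\geq \varepsilon_0 n^{1-1/\alpha-\varepsilon}$ for $n$ large, since $\varepsilon > \eta$ and in fact $N_n^*$ is genuinely of order $n^{1-1/\alpha}$ — this needs the lower bound on $N_n^*$, obtainable e.g. from $\sum_y N_n(y) = n$ and $R_n \leq n^{1/\alpha+\gamma}$, giving $N_n^* \geq n^{1-1/\alpha-\gamma}$), and moving toward a nearby site $z_0$ with $N_n(z_0) = 0$ (which exists at distance at most $R_n \leq n^{1/\alpha+\gamma}$), the local time drops from $\geq \varepsilon_0 n^{1-1/\alpha-\varepsilon}$ down to $0$. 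By the H\"older bound, between two consecutive integer sites the value of $N_n$ changes by at most $n^{(1-1/\alpha+\gamma)/2}$, so along the path from $y_0$ to $z_0$ there must be a full sub-interval of sites on which $N_n$ lies in the target window $[\frac{\varepsilon_0}{10} n^{1-1/\alpha-\varepsilon}, \varepsilon_0 n^{1-1/\alpha-\varepsilon}]$.

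The key quantitative step is to count how many such sites there must be. The window has width $\frac{9\varepsilon_0}{10} n^{1-1/\alpha-\varepsilon}$, and to traverse it the argument $y$ must move by a distance $\ell$ large enough that the H\"older bound permits a change of that size: we need $n^{(1-1/\alpha+\gamma)/2} \ell^{(\alpha-1)/2} \geq \frac{9\varepsilon_0}{10} n^{1-1/\alpha-\varepsilon}$, i.e.
$$
\ell \geq \left(\tfrac{9\varepsilon_0}{10}\right)^{\frac{2}{\alpha-1}} n^{\frac{2}{\alpha-1}\left(1-\frac 1\alpha-\varepsilon\right) - \frac{1}{\alpha-1}\left(1-\frac 1\alpha+\gamma\right)} = \left(\tfrac{9\varepsilon_0}{10}\right)^{\frac{2}{\alpha-1}} n^{\frac 1\alpha - \frac{2\varepsilon+\gamma}{\alpha-1}},
$$
using $\frac{1}{\alpha-1}(1-1/\alpha) = \frac 1\alpha$. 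This gives exactly the claimed exponent $\frac 1\alpha - \frac{2\varepsilon+\gamma}{\alpha-1}$ (the constant $\frac{\varepsilon_0}{10}$ in the statement being a convenient weakening of $(\frac{9\varepsilon_0}{10})^{2/(\alpha-1)}$, since for $\alpha \in (1,2]$ one has $\frac{2}{\alpha-1} \geq 2$ and can absorb constants). All the integer sites in such a traversal interval lie in $\{N_n \geq 0\}$, hence are visited, so the count is legitimate; one should note the assumption $\varepsilon < \frac{\alpha-1}{\alpha(3+2\beta(\alpha-1))}$ (together with $\gamma$ small) guarantees the exponent $\frac 1\alpha - \frac{2\varepsilon+\gamma}{\alpha-1}$ is strictly positive, so the lower bound is not vacuous.

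The main obstacle, and the point requiring care, is making rigorous the intuitive "the local time must pass through the window on a long interval" step: one should formalize it by considering, for each integer $j$, the sites where $N_n$ first enters $[\frac{\varepsilon_0}{10} n^{1-1/\alpha-\varepsilon}, \varepsilon_0 n^{1-1/\alpha-\varepsilon}]$ and where it first exits below $\frac{\varepsilon_0}{10} n^{1-1/\alpha-\varepsilon}$, moving monotonically-in-path from $y_0$ toward $z_0$ (which may require choosing the direction in which $N_n(z_0)=0$ occurs), and observing that between these two times the argument must travel at least distance $\ell$ above by the H\"older continuity bound applied to that pair of endpoints. A secondary technical point is to confirm that the left endpoint of the window is not too close to $0$ relative to the H\"older increment, i.e. that $N_n$ cannot ``jump over'' the window in a single step: since the increment per unit distance is $O(n^{(1-1/\alpha+\gamma)/2})$ and the window has width of order $n^{1-1/\alpha-\varepsilon}$, which dominates $n^{(1-1/\alpha+\gamma)/2}$ precisely when $\varepsilon + \gamma/2 < (1-1/\alpha)/2$, i.e. $\varepsilon < \frac{\alpha-1}{2\alpha} - \frac\gamma2$ — this is again ensured by the smallness hypotheses on $\varepsilon$ and $\gamma$. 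Once these points are settled, the counting is the elementary computation above.
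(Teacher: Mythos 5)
Your proposal uses the same core ingredients as the paper's proof (the H\"older bound defining $\Omega_n$, the lower bound $N_n^* \geq n^{1-\frac 1\alpha-\gamma}$ coming from $n \leq N_n^* R_n$, the existence of a nearby site with zero local time, and the computation of the scale $\ell = \left(\tfrac{\varepsilon_0}{10}\right)^{\frac{2}{\alpha-1}} n^{\frac 1\alpha - \frac{2\varepsilon+\gamma}{\alpha-1}}$), and the exponent arithmetic is exactly right. The difference is in how the "$\geq \ell$ sites" count is extracted, and the paper's packaging is both cleaner and circumvents a gap in your formalization.

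The paper does not track a crossing interval at all. It anchors at a single site: it defines
\[
y_0 := \min\Big\{y \geq y_1 \,:\, N_n(y) \leq \tfrac{\varepsilon_0}{2}\, n^{1-\frac 1\alpha-\varepsilon}\Big\},
\]
where $N_n(y_1)=N_n^*$, uses the one-step H\"older increment bound $N_n(y_0-1)-N_n(y_0)\leq n^{(1-\frac 1\alpha+\gamma)/2}$ together with $(1-\frac1\alpha+\gamma)/2 < 1-\frac1\alpha-\varepsilon$ to pin $N_n(y_0)$ into $\left[\tfrac{\varepsilon_0}{4}, \tfrac{\varepsilon_0}{2}\right] n^{1-\frac1\alpha-\varepsilon}$, and then applies the H\"older bound again to conclude that \emph{every} $z$ with $|z-y_0|\leq \ell$ satisfies $|N_n(z)-N_n(y_0)|\leq \tfrac{\varepsilon_0}{10} n^{1-\frac1\alpha-\varepsilon}$, hence $N_n(z)$ lies in $\left[\tfrac{\varepsilon_0}{10},\varepsilon_0\right] n^{1-\frac1\alpha-\varepsilon}$. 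No monotonicity, no tracking of entry/exit times of the window.

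By contrast, your "first enters the window / first exits below" formalization has a genuine gap: between the first entry $a$ and the first exit below $b$, the local time can climb back above the top of the window, so it is not true that all sites in $[a,b)$ lie in the window. The fix within your framework is to take $a$ to be the \emph{last} site before $b$ at which $N_n$ crosses downward past the top of the window (rather than the first entry), so that on $[a,b)$ the local time is trapped below the top by choice of $a$ and above the bottom by choice of $b$. With that change (and the "no jump-over" observation you correctly identified as needing $(1-\frac1\alpha+\gamma)/2 < 1-\frac1\alpha-\varepsilon$, to guarantee the crossing endpoints are genuinely near the top and bottom of the window), your argument is sound and gives the same bound. But the paper's single-anchor-point argument is shorter and avoids the entry/exit bookkeeping entirely, so you should consider reorganizing along those lines.
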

\noindent Indeed according to this lemma and \eqref{majorationphin}, we get for $n$ large enough and on $\Omega_n$,
\begin{eqnarray*}
\prod_y \left\vert\varphi_\xi(tN_n(y))
\right\vert
&\le & \exp\left(-\sigma' n^{-\beta(1-\frac 1 \alpha +\eta)} n^{\frac 1 \alpha - \frac{2\varepsilon + \gamma}{\alpha-1}}n^{\beta(1 -\frac 1 \alpha -\varepsilon)}\right)\\
 & \le  &\exp\left(-\sigma' n^{\frac 1 \alpha - \beta (\eta+\varepsilon) -\frac{2\varepsilon + \gamma}{\alpha-1}}\right) \hspace{1cm} \text{for all }\varepsilon_0n^{-1+\frac 1 \alpha -\eta}\le t\le n^{-1+\frac 1 \alpha + \varepsilon},
\end{eqnarray*} 
for some constant $\sigma'>0$. This proves Proposition \ref{sec:step2}, since the hypothesis on $\varepsilon$ and $\gamma$ implies that  
$$\frac 1 \alpha - \beta(\eta+\varepsilon) -\frac{2\varepsilon + \gamma}{\alpha-1}> \frac 1 \alpha - 2\beta\varepsilon -\frac{3\varepsilon}{\alpha-1}>0.$$

\begin{proof}[Proof of Lemma \ref{sec:lem2b}]
Let $y_1$ be such that $N_n(y_1)=N_n^*=\sup_z N_n(z)$. Since $n=\sum_zN_n(z)\le N_n^*R_n$, we have $N_n(y_1)\ge n^{1-\frac 1 \alpha -\gamma}$, on $\Omega_n$.
Set 
$$y_0:=\min\left\{y \ge  y_1\ :\ N_n(y)\le \frac{\varepsilon_0}{2} n^{1 - \frac 1 \alpha-\varepsilon}\right\}.$$
Observe that  $y_0>y_1$ for $n$ large enough, since $\varepsilon > \gamma$ by hypothesis. In particular 
$$N_n(y_0-1)> \frac{\varepsilon_0}{2} n^{1 - \frac 1 \alpha-\varepsilon} \ge N_n(y_0).$$
But on $\Omega_n$,
\[N_n(y_0-1)-N_n(y_0)\le n^{(1-\frac 1 \alpha +\gamma)/2}
\, .
\]
Moreover, the hypotheses made on $\gamma$ and $\varepsilon$ imply that $\gamma<(1-1/\alpha)/3$ and $\varepsilon < (1-1/\alpha)/3$. Thus $\varepsilon<(1-1/ \alpha-\gamma)/2$, or equivalently $(1-1/\alpha + \gamma)/2 < 1- 1/\alpha - \varepsilon$. Therefore 
\begin{eqnarray}
\label{Nn0}
\frac{\varepsilon_0}{4} n^{1 - \frac 1 \alpha-\varepsilon} \le N_n(y_0) \le \frac{\varepsilon_0}{2} n^{1 - \frac 1 \alpha-\varepsilon},
\end{eqnarray}
for $n$ large enough. Next if $|y_0-z|\le \left(\frac{\varepsilon_0}{10}\right)^{\frac{2}{\alpha-1}} n^{\frac 1 \alpha -\frac{2\varepsilon + \gamma}{\alpha -1}}$, then on $\Omega_n$, 
$$|N_n(z) - N_n(y_0)| \le \sqrt{|y_0-z|^{\alpha-1} n^{1-\frac 1 \alpha +\gamma}} \le \frac{\varepsilon_0}{10} n^{1-\frac 1 \alpha -\varepsilon}.$$
Together with \eqref{Nn0}, this proves the lemma. 
\end{proof}  
\subsection{Proof of Proposition
\ref{sec:step3}.}
Let $M$ and $N$ be two positive integers such that
${\mathbb P}(X_1=N)>0$ and ${\mathbb P}(X_1=-M)>0$.
We denote by ${\mathcal C}^+$ the $(M+N)$-uple
$(N,...,N,-M,...,-M)$ in which $N$ is repeated $M$ times
and then $-M$ is repeated $N$ times.
We denote by ${\mathcal C}^-$ the "symmetric" $(M+N)$-uple
$(-M,...,-M,N,...,N)$ in which $-M$ is repeated $N$ times
and then $N$ is repeated $M$ times. Set $T:=M+N$ and observe that
$$ p:={\mathbb P}((X_1,...,X_T)=\C^+)
  ={\mathbb P}((X_1,...,X_T)=\C^-)>0.$$

\noindent Let us notice that $(X_1,...,X_T)=\C^+$ corresponds to 
a trajectory going up to $MN$ (in $M$ steps) and then
coming back down to $0$ (in $N$ steps).
Analogously, $(X_1,...,X_T)=\C^-$ corresponds
to a trajectory that goes down to $-MN$ (in $N$ steps) and 
comes back up to $0$ (in $M$ steps).

\noindent We introduce now the event 
$$\D_n:=\left\{C_n  > \frac{np}{2T}\right\},$$
where
$$C_n:=\#\left\{k=0,...,\left\lfloor \frac n T\right\rfloor-1\ :\ 
(X_{kT+1},\dots,X_{(k+1)T})=\C^\pm\right\}.$$
Since the sequences $(X_{kT+1},\dots,X_{(k+1)T})$, for $k\ge 0$, are independent of each other, Chernoff's inequality implies that there exists $c>0$ such that
%\footnote{Indeed, for any $a>0$, we have 
%${\mathbb P}\left(\C_n<\frac{np}{2T}\right)
%  ={\mathbb P}\left(e^{-a\C_n}>e^{-\frac{anp}{2T}}\right)
%\le \frac{{\mathbb E}[e^{-a\C_n}]}{e^{-\frac{anp}{2T}}}
%=\frac{(1+p(e^{-a}-1))^{\lfloor n/T\rfloor}}{e^{-\frac{anp}{2T}}}$
%and $\frac{\partial}{\partial a}(1+p(e^{-a}-1))=-p$ if $a=0$
%and $\frac{\partial}{\partial a} e^{-\frac{ap}2}=-{\frac p2}$ if $a=0$.}
$${\mathbb P}(\D_n) =1-o(e^{-cn}).$$
\noindent We introduce now the notion of "peak".
We say that there is a peak based on $y$ at time $n$ if $S_n=y$ and 
$(X_{n+1},\dots,X_{n+T})= \C^\pm$.
We will see (in Lemma \ref{sec:p_n} below) that, on $\Omega_n \cap \D_n$, there is a large number of $y\in \ZZ$
on which are based a large number of peaks.
For any $y\in {\mathbb Z}$, let
$$
C_n(y):=\#\left\{k=0,\dots,\left\lfloor \frac n T\right\rfloor-1 \ :\ 
S_{kT}=y \textrm{ and }(X_{kT+1},\dots,X_{(k+1)T}) = \C^\pm\right\},
$$
be the number of peaks based on $y$ before time $n$ (and at times which are multiple 
of $T$), and let 
$$p_n :=\#\{y\in {\mathbb Z}\ :\
C_n(y)
\ge  n^{1-\frac{1}{\alpha}-2\gamma}\},$$
be the number of sites $y\in \ZZ$ on which at least $n^{1-\frac{1}{\alpha}-2\gamma}$ peaks are based.

\begin{lem}\label{sec:p_n}
On $\Omega_n \cap \D_n$, we have
$p_n\ge 3NM n^{\frac 1 \alpha- \alpha\gamma}$, for $n$ large enough. 
\end{lem}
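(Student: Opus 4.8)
The plan is a straightforward counting (pigeonhole) argument that plays the lower bound on the total number of peaks (valid on $\D_n$) against the upper bounds on $R_n$ and on $N_n^*$ (valid on $\Omega_n$).

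First observe that $\sum_{y\in\ZZ}C_n(y)=C_n$, since every index $k\le \lfloor n/T\rfloor-1$ with $(X_{kT+1},\dots,X_{(k+1)T})=\C^\pm$ is counted exactly once in the left-hand side, namely at the site $y=S_{kT}$. Hence on $\D_n$ one has $\sum_y C_n(y)>\frac{np}{2T}$. Next I record two facts that hold on $\Omega_n$. A peak based at $y$ forces $S_{kT}=y$ with $kT<n$, so the set of sites carrying at least one peak is contained in $\{y:N_n(y)>0\}$, which has cardinality $R_n\le n^{\frac1\alpha+\gamma}$. Moreover each peak based at $y$ occurs at a time $kT$, and these times are pairwise distinct and lie in $\{0,\dots,n-1\}$, so $C_n(y)\le N_n(y)\le N_n^*$; by \eqref{sec:majoN_n*} we have $N_n^*\le n^{1-\frac1\alpha+\frac{\alpha\gamma}{2}}$ on $\Omega_n$.

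Now split the sum $\sum_y C_n(y)$ according to whether $C_n(y)\ge n^{1-\frac1\alpha-2\gamma}$ or not. The sites with $C_n(y)<n^{1-\frac1\alpha-2\gamma}$ contribute at most $R_n\,n^{1-\frac1\alpha-2\gamma}\le n^{1-\gamma}$, while the $p_n$ sites with $C_n(y)\ge n^{1-\frac1\alpha-2\gamma}$ contribute at most $p_n N_n^*\le p_n\, n^{1-\frac1\alpha+\frac{\alpha\gamma}{2}}$. Therefore, on $\Omega_n\cap\D_n$,
\[
\frac{np}{2T}<p_n\,n^{1-\frac1\alpha+\frac{\alpha\gamma}{2}}+n^{1-\gamma}.
\]
Since $1-\gamma<1$, for $n$ large enough the last term is at most $\frac{np}{4T}$, which yields $p_n\ge \frac{p}{4T}\,n^{\frac1\alpha-\frac{\alpha\gamma}{2}}$. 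Writing $n^{\frac1\alpha-\frac{\alpha\gamma}{2}}=n^{\frac{\alpha\gamma}{2}}\,n^{\frac1\alpha-\alpha\gamma}$ and noting that $n^{\frac{\alpha\gamma}{2}}\ge \frac{12NMT}{p}$ for $n$ large, we obtain $p_n\ge 3NM\,n^{\frac1\alpha-\alpha\gamma}$, as claimed.

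\textbf{Expected main obstacle.} The argument itself is routine; the only point requiring care is the bookkeeping of exponents. One must use the sharp bound $N_n^*\le n^{1-\frac1\alpha+\frac{\alpha\gamma}{2}}$ from \eqref{sec:majoN_n*} rather than the weaker $n^{1-\frac1\alpha+\eta}$ from Lemma \ref{lem:omega_n}, so that the gap between $\frac{\alpha\gamma}{2}$ and $\alpha\gamma$ in the final exponent leaves enough room to absorb both the constant $3NM$ and the lower-order term $n^{1-\gamma}$; and one must be careful that $C_n(y)\le N_n(y)$, which rests precisely on the peaks based at $y$ occurring at pairwise distinct times.
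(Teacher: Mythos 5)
Your proof is correct and is essentially the same as the paper's: both split $\sum_y C_n(y) \ge np/(2T)$ according to whether $C_n(y)$ is above or below the threshold $n^{1-\frac1\alpha-2\gamma}$, bound the small part by $R_n\, n^{1-\frac1\alpha-2\gamma}\le n^{1-\gamma}$ and the large part by $p_n N_n^*\le p_n n^{1-\frac1\alpha+\frac{\alpha\gamma}{2}}$ via \eqref{sec:majoN_n*}, and conclude. You merely spell out the final algebra (absorbing $n^{1-\gamma}$ and the constant $3NM$) that the paper leaves implicit.
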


\begin{proof}
Note that $C_n(y)\le N_n(y) $ for all $y\in \ZZ$. Thus on $\Omega_n \cap \D_n$, 
\begin{eqnarray*}
\frac{np}{2T} & \le & \sum_{ y \in {\mathbb Z}\ :\ C_n(y) < n^{1-\frac{1}{\alpha}-2\gamma}}
C_n(y) + \sum_{ y \in {\mathbb Z}\ :\ C_n(y) \ge n^{1-\frac{1}{\alpha}-2\gamma}}
C_n(y) 
\\
& \le &
n^{1-\frac{1}{\alpha}-2\gamma} R_n + N_n^* p_n
\\
&\le &  n^{1-\gamma} +p_n n^{1-\frac{1}{\alpha}+\frac{\alpha\gamma}2},
\end{eqnarray*}
according to (\ref{sec:majoN_n*}).
This proves the lemma. 
\end{proof}
We have proved that, if $n$ is large enough, the event $\Omega_n\cap\D_n$
is contained in the event 
$$\E_n:= \{p_n\ge 3NM n^{\frac 1 \alpha- \alpha\gamma}\}.$$
Now, on  $\E_n$, we define
$Y_i$  for $i=1,\dots,\left\lfloor{n^{\frac 1 \alpha-\alpha\gamma}}
   \right\rfloor$, by 
$$Y_1:=\min\left\{y\in {\mathbb Z}\ :\
C_n(y)\ge n^{1-\frac 1 \alpha -2 \gamma} \right\},$$
and
$$Y_{i+1}:= \min\left\{y\ge Y_i+3NM\ :\ C_n(y)\ge n^{1-\frac 1 \alpha -2 \gamma} \right\}\quad \textrm{for }i\ge 1 .$$
The $Y_i$'s are sites on which at least
$n^{1-\frac 1 \alpha -2 \gamma}$ peaks are based and are such 
that $\vert Y_i-Y_j\vert\ge 3NM$, if $i\ne j$.
For every $i=1,\dots,\left\lfloor{n^{\frac 1 \alpha-\alpha\gamma}}\right\rfloor$, 
let
$t_i^{1},\dots,t_i^{\left\lfloor n^{1-\frac 1 \alpha -2 \gamma}\right\rfloor}$ be the 
$\left\lfloor n^{1-\frac 1 \alpha -2 \gamma}\right\rfloor$
first times (which are multiples of $T$) when a peak is based on the site $Y_i$. 
We also define $N_n^{0}(Y_i+NM)$ as the
 number of visits of $S$ before time $n$ to $Y_i+NM$, which do not occur during the time intervals $[t_i^j,t_i^j+T]$, 
for $j\le \left\lfloor n^{1-\frac 1 \alpha -2 \gamma}\right\rfloor$.   

\begin{lem} 
\label{independance}
Conditionally to the event $\E_n$, 
$((N_n(Y_i+MN)-N_n^{0}(Y_i+MN),i\ge 1)$ is a sequence of independent
identically distributed random variables with binomial distribution
${\mathcal B}\left(\left\lfloor n^{1-\frac 1 \alpha -2 \gamma}\right\rfloor;
\frac 12\right)$.
Moreover this sequence is independent of $((N_n^0(Y_i+MN),i\ge 1)$. 
\end{lem}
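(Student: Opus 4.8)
The plan is to exhibit, for each $i$, an explicit decomposition of the visits to $Y_i+MN$ into two disjoint groups — those happening during the ``peak excursions'' $[t_i^j, t_i^j+T]$ and those happening outside — and to observe that the count of the first group is a deterministic function of independent coin flips. First I would fix a realization of the event $\E_n$, which is measurable with respect to $\sigma(C_n(y), y\in\ZZ)$, hence with respect to the $\sigma$-field $\G_n$ generated by the positions $(S_{kT}, k=0,\dots,\lfloor n/T\rfloor-1)$ together with the indicators $\mathbf 1\{(X_{kT+1},\dots,X_{(k+1)T})=\C^+\}$ and $\mathbf 1\{(X_{kT+1},\dots,X_{(k+1)T})=\C^-\}$; in particular the sites $Y_i$ and the times $t_i^j$ are $\G_n$-measurable. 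Given $\E_n$, consider the $j$-th peak based on $Y_i$, occurring at time $t_i^j$ with $S_{t_i^j}=Y_i$ and $(X_{t_i^j+1},\dots,X_{t_i^j+T})=\C^\pm$: if the block equals $\C^+$, the walk goes up from $Y_i$ to $Y_i+MN$ and comes back, so it visits $Y_i+MN$ exactly once during $[t_i^j,t_i^j+T]$; if the block equals $\C^-$, the walk goes down to $Y_i-MN$ and back, so it does not visit $Y_i+MN$ at all during that interval (here one uses $M,N\ge 1$, so $MN\ge 1$ and the sign matters). Since $\mathbb P(\C^+)=\mathbb P(\C^-)=p$, conditionally on the block being a peak it is $\C^+$ or $\C^-$ each with probability $1/2$, independently over the $\lfloor n/T\rfloor$ blocks.

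Thus, writing $\epsilon_i^j\in\{0,1\}$ for the indicator that the $j$-th peak based on $Y_i$ is of type $\C^+$, we have
\[
N_n(Y_i+MN)-N_n^0(Y_i+MN)=\sum_{j=1}^{\lfloor n^{1-1/\alpha-2\gamma}\rfloor}\epsilon_i^j,
\]
because the visits to $Y_i+MN$ outside the intervals $[t_i^j,t_i^j+T]$ are by definition $N_n^0(Y_i+MN)$, and inside the $j$-th such interval there is exactly $\epsilon_i^j$ visit. Conditionally on $\E_n$ (more precisely, on $\G_n$ restricted to $\E_n$), the variables $(\epsilon_i^j)$ are, by the preceding observation, i.i.d. Bernoulli$(1/2)$: they are the ``refinements'' of the peak-blocks into their two admissible shapes, and conditioning on $\G_n$ only records that each relevant block is a peak, not which of the two types. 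Since the $Y_i$ are at pairwise distance $\ge 3NM > 2MN$, the intervals $Y_i+MN$ are distinct for distinct $i$ — in fact $|Y_i+MN-(Y_j+MN)|=|Y_i-Y_j|\ge 3NM$ — so the blocks contributing to different $i$ are disjoint families of blocks, and hence $(N_n(Y_i+MN)-N_n^0(Y_i+MN))_i$ are independent, each a sum of $\lfloor n^{1-1/\alpha-2\gamma}\rfloor$ i.i.d. Bernoulli$(1/2)$, i.e. Binomial$(\lfloor n^{1-1/\alpha-2\gamma}\rfloor,1/2)$.

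Finally, for the independence from $(N_n^0(Y_i+MN))_i$, I would argue that conditionally on $\G_n$ the random variables $N_n^0(Y_i+MN)$ are functions only of the blocks that are \emph{not} among the peak-blocks at the $Y_i$ at the recorded times $t_i^j$ (together with $\G_n$-measurable data), while the $\epsilon_i^j$ are functions only of the peak-blocks at those times; these are disjoint collections of the i.i.d. $T$-blocks, so conditional independence follows, and since this holds for every value of the $\G_n$-data consistent with $\E_n$ it holds conditionally on $\E_n$. The main obstacle — and the point requiring the most care — is the conditioning bookkeeping: one must check that conditioning on $\E_n$ (equivalently on the relevant part of $\G_n$) genuinely leaves the ``type'' of each peak-block uniform and independent, i.e. that the events defining $Y_1,Y_2,\dots$ and the times $t_i^j$ depend on the blocks only through the events $\{$block is a peak$\}$ and through positions at multiples of $T$, and never through the distinction $\C^+$ versus $\C^-$; this is why the $Y_i$ are selected via the counts $C_n(y)$ (which lump $\C^+$ and $\C^-$ together) rather than via any finer statistic.
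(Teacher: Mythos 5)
Your proof takes the same route as the paper's: write $N_n(Y_i+MN)-N_n^0(Y_i+MN)$ as a sum of indicators of the peak types, argue that conditionally those indicators are i.i.d.\ Bernoulli$(1/2)$, and argue that $N_n^0$ depends only on the complementary blocks. However, there are two flaws in the write-up that should be fixed.

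First, your $\sigma$-field $\G_n$ is misdefined. You generate $\G_n$ by the positions $(S_{kT})_k$ \emph{together with both} indicators $\mathbf 1\{(X_{kT+1},\dots,X_{(k+1)T})=\C^+\}$ and $\mathbf 1\{(X_{kT+1},\dots,X_{(k+1)T})=\C^-\}$. Conditioning on that $\sigma$-field determines the type of every peak block, so the statement two lines later, that ``conditioning on $\G_n$ only records that each relevant block is a peak, not which of the two types,'' is inconsistent with the definition, and the claim that the $\epsilon_i^j$ are conditionally Bernoulli$(1/2)$ would be false. What you need is the single indicator $\mathbf 1\{(X_{kT+1},\dots,X_{(k+1)T})\in\{\C^+,\C^-\}\}$. (Note that the positions $(S_{kT})$ do not leak the type either, since both $\C^+$ and $\C^-$ return to the same endpoint.) With this correction, the $Y_i$'s, $t_i^j$'s, $C_n(y)$'s and $\E_n$ are $\G_n$-measurable, and the argument is sound.

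Second, the role of the separation $|Y_i-Y_{i'}|\ge 3NM$ is misattributed. You invoke it to conclude that ``the blocks contributing to different $i$ are disjoint families of blocks,'' but disjointness of those blocks only requires $Y_i\ne Y_{i'}$ (a block is a peak based at one site). Where the $3NM$ margin is genuinely needed is exactly the claim you state at the end without justification: that $N_n^0(Y_i+MN)$ is a function of the $X_k$'s outside $\bigcup_{i',j'}[t_{i'}^{j'},t_{i'}^{j'}+T]$. Indeed $N_n^0(Y_i+MN)$ counts visits to $Y_i+MN$ outside the intervals $[t_i^j,t_i^j+T]$, and such visits could a priori occur during a peak based at some $Y_{i'}$ with $i'\ne i$. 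A peak based at $Y_{i'}$ stays in $[Y_{i'}-MN,Y_{i'}+MN]$, so it could visit $Y_i+MN$ only if $|Y_{i'}-Y_i|\le 2MN$, which the $3NM$ spacing rules out. This is precisely the remark ``the peaks based on the other $Y_k$'s cannot pass through $Y_i+MN$'' in the paper; in your proposal it appears as the unproved ``main obstacle'' paragraph, so you should spell it out.
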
 
\begin{proof}
%The assertions of independence follow from the fact that for every $i$ and $j$, 
%the sequence $(X_{t_i^j+1},\dots,X_{t_i^j+T})$ is independent of the $X_k$'s, 
%for $k\notin (t_i^j,t_i^j+T]$. Moreover, conditionally to ${\mathcal F}_n$ 
%this sequence is either equal to $\C^+$ or to $\C^-$, each with the same probability 
%$1/2$. In the first case $N_n(Y_i+MN)-N_n^0(Y_i+MN)$ "increases" by $1$, whereas in 
%the second case its value remains unchanged. Since on level $Y_i$ we consider 
%$\lfloor n^{1-\frac 1 \alpha -2 \gamma}\rfloor$ peaks, and since the peaks based on 
%some $Y_j$, for $j\neq i$, cannot hit level $Y_i+MN$, we get that 
%$N_n(Y_i+MN)-N_n^{0}(Y_i+MN)$ has binomial distribution 
%${\mathcal B}\left(\left\lfloor n^{1-\frac 1 \alpha -2 \gamma}\right\rfloor;
%\frac 12\right)$, as wanted.
On $\E_n$, we have
$$N_n(Y_i+MN)-N_n^{0}(Y_i+MN)=\sum_{j=1}^{\lfloor n^{1-\frac 1\alpha-2\gamma}
  \rfloor}
   {\bf 1}_{\{(X_{t_i^j+1},...,X_{t_i^j+T})\in\C^+\}},$$ 
since the peaks based on the other $Y_k$'s cannot pass through $Y_i+MN$.
But conditionally to $\E_n$, the sequence 
$\left({\bf 1}_{\{(X_{t_i^j+1},...,X_{t_i^j+T})\in{\C^+}\}}\right)_{i,j}$
is a sequence of independent Bernoulli random variables with parameter $1/2$, which is independent of $(X_k, k\not\in
  \bigcup_{i,j}[t_i^j,...,t_i^j+T])$. Since $N_n^0(Y_i+MN)$ only depends on the values of the $X_k$'s for $k\not\in
  \bigcup_{i,j}[t_i^j,...,t_i^j+T]$, the result follows.
\end{proof} 

\noindent Let now 
$\rho:=\sup\{\vert\varphi_\xi(u)\vert\ :\ d\left(u,\frac{2\pi}d{\mathbb Z}\right)
\ge\varepsilon_0\}$. According to Formula 
\eqref{majorationphi},
\begin{eqnarray*} 
\va{\varphi_\xi(u)} 
& \leq & 
\rho {\bf 1}_{\{d\left(u,\frac{2\pi}d{\mathbb Z}\right) \geq \epsilon_0\}}
+ \exp\left(-\sigma d\left(u,\frac{2\pi}d{\mathbb Z}\right)^\beta \right)
{\bf 1}_{\{d\left(u,\frac{2\pi}d{\mathbb Z}\right) < \epsilon_0\}} 
\\
& \leq & \exp \pare{-\sigma n^{-\frac 1 \alpha + 2\alpha\gamma}}
\, , 
\end{eqnarray*} 
as soon as $d\left(u,\frac{2\pi}d{\mathbb Z}\right) \geq n^{-\frac{1}{\alpha \beta}+
\frac{2\alpha\gamma}{\beta}}$, and  
$\rho \leq \exp \pare{-\sigma n^{-\frac 1 \alpha + 2\alpha\gamma}}$. But recall 
that $\rho<1$ and $2\alpha^2\gamma < 1$. Therefore, for $n$ large enough,
\begin{eqnarray}
\label{majodist}
\prod_z \va{\varphi_{\xi}(t N_n(z)) }
\leq \exp \pare{- \sigma n^{-\frac 1 \alpha + 2\alpha\gamma}\# \acc{z\ :\ d\left(t N_n(z),\frac{2\pi}d{\mathbb Z}\right) \geq 
n^{-\frac{1}{\alpha \beta}+\frac{2\alpha\gamma}{\beta}}}} \, .
\end{eqnarray}
\noindent Then notice that
\begin{eqnarray}
\label{Gt}
d\left(t N_n(z),\frac{2\pi{\mathbb Z}}d\right)\ge
n^{-\frac{1}{\alpha \beta}+\frac{2\alpha\gamma}{\beta}} \Longleftrightarrow N_n(z)\in \I:=\bigcup_{k\in{\mathbb Z}}I_k,
\end{eqnarray}
where for all $k\in \ZZ$,
$$I_k:=\left[\frac{2k\pi}{dt}+\frac{n^{-\frac{1}{\alpha \beta}+\frac{2\alpha\gamma}{\beta}}}
{t},\frac{2(k+1)\pi}{d t}-\frac{n^{-\frac{1}{\alpha \beta}+\frac{2\alpha\gamma}{\beta}}}
{t}\right]. $$
In particular ${\mathbb R}\setminus\I=\bigcup_{k\in{\mathbb Z}}J_k$, where for all $k\in \ZZ$,
$$J_k:=\left(\frac{2k\pi}{dt}-\frac{n^{-\frac{1}{\alpha \beta}+\frac{2\alpha\gamma}{\beta}}}
{t},\frac{2 k\pi}{d t}+\frac{n^{-\frac{1}{\alpha \beta}+\frac{2\alpha\gamma}{\beta}}}
{t}\right). $$

\begin{lem}\label{sec:liminf}
Under the hypotheses of Proposition \ref{sec:step3},
for every $i\le \left\lfloor{n^{\frac 1 \alpha-\alpha\gamma}} \right\rfloor$, $t\in (n^{-1+\frac 1 \alpha+\varepsilon},\pi/d)$ and $n$ large enough,
$${\mathbb P}\left(N_n(Y_i+MN)\in \I\mid \E_n 
,\ N_n^0(Y_i+MN)\right) \ge \frac 13\quad \textrm{almost surely}.$$
\end{lem}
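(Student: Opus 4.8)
The plan is to use Lemma~\ref{independance} to pass from $N_n(Y_i+MN)$ to a single binomial variable, and then to split the range of $t$ into two regimes according to whether the forbidden intervals $J_k$ are narrower or wider than $1$. By Lemma~\ref{independance}, conditionally on $\E_n$ and on the value $m:=N_n^0(Y_i+MN)$, the variable $N_n(Y_i+MN)$ is distributed as $m+B$, where $B$ is binomial with parameters $L:=\floor{n^{1-\frac1\alpha-2\gamma}}$ and $\frac12$ (the extra conditioning on $N_n^0(Y_i+MN)$ is harmless, since $B$ and the $N_n^0$'s are independent given $\E_n$). So, writing $\eta_n:=n^{-\frac1{\alpha\beta}+\frac{2\alpha\gamma}{\beta}}$, it is enough to show that, for $n$ large (uniformly in $m\in\ZZ$, in $i$, and in $t\in(n^{-1+\frac1\alpha+\varepsilon},\pi/d)$),
\[
{\mathbb P}(m+B\in\I)\ \ge\ \tfrac13 .
\]
I would rely on two elementary facts about $B$: by Stirling, $\sup_j{\mathbb P}(B=j)={\mathbb P}(B=\floor{L/2})\le c_0L^{-1/2}$ for a universal $c_0$; and, by unimodality of $j\mapsto{\mathbb P}(B=j)$ together with a telescoping sum, $\sum_j|{\mathbb P}(B=j)-{\mathbb P}(B=j-1)|=2{\mathbb P}(B=\floor{L/2})\le 2c_0L^{-1/2}$. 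Under the hypotheses $\gamma<\frac1{2\alpha^2}$ and $\gamma<\frac12\frac{\alpha-1}{\alpha}$ one has $\eta_n\to0$ and $L\to\infty$. Recall also that $\RR\setminus\I=\bigcup_kJ_k$, that $P:=\frac{2\pi}{dt}>2$ (since $t<\pi/d$), that the $J_k$ have length $\frac{2\eta_n}{t}$ and are spaced $P$ apart, and hence that $\RR\setminus\I$ has density $\frac{2\eta_n/t}{P}=\frac{d\eta_n}{\pi}\to0$.

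In the regime $t\ge2\eta_n$, each $J_k$ has length $\le1$, hence contains at most one integer, while the gap $P-\frac{2\eta_n}{t}>2-1=1$ between consecutive $J_k$'s exceeds $1$; consequently two consecutive integers can never both lie in $\RR\setminus\I$, i.e. ${\bf 1}_{\I}(\ell)+{\bf 1}_{\I}(\ell+1)\ge1$ for all $\ell\in\ZZ$. Taking expectations, ${\mathbb P}(m+B\in\I)+{\mathbb P}(m+1+B\in\I)\ge1$. On the other hand, writing ${\mathbb P}(m+1+B\in\I)=\sum_j{\bf 1}_{\I}(m+j){\mathbb P}(B=j-1)$, one gets $|{\mathbb P}(m+B\in\I)-{\mathbb P}(m+1+B\in\I)|\le\sum_j|{\mathbb P}(B=j)-{\mathbb P}(B=j-1)|\le2c_0L^{-1/2}$. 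Adding these two inequalities and dividing by $2$ gives ${\mathbb P}(m+B\in\I)\ge\frac12-c_0L^{-1/2}$, which is $\ge\frac13$ for $n$ large, uniformly in $m$ and $t$.

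In the regime $t<2\eta_n$ (which forces $t\to0$, hence $P\to\infty$, since $\eta_n\to0$), I would restrict to the bulk of $B$. By Chebyshev's inequality (the variance of $B$ being $L/4$), ${\mathbb P}(|B-L/2|\ge\sqrt L)\le\frac14$, so it suffices to bound ${\mathbb P}(m+B\in\RR\setminus\I,\ |B-L/2|<\sqrt L)$. On the window $(m+\frac L2-\sqrt L,\ m+\frac L2+\sqrt L)$, of length $2\sqrt L$, there are at most $2\bigl(\frac{\sqrt L}{P}+1\bigr)$ intervals $J_k$, each containing at most $\frac{2\eta_n}{t}+1\le\frac{4\eta_n}{t}$ integers (here $\frac{2\eta_n}{t}\ge1$), so this window contains at most $\frac{8\sqrt L\,\eta_n}{Pt}+\frac{8\eta_n}{t}$ bad integers; multiplying by $\sup_j{\mathbb P}(B=j)\le c_0L^{-1/2}$ and using $Pt=\frac{2\pi}{d}$ yields
\[
{\mathbb P}\bigl(m+B\in\RR\setminus\I,\ |B-L/2|<\sqrt L\bigr)\ \le\ \frac{4c_0d\,\eta_n}{\pi}+\frac{8c_0\,\eta_n}{t\sqrt L}.
\]
The first term tends to $0$ with $\eta_n$; for the second, $t>n^{-1+\frac1\alpha+\varepsilon}$ and $L$ of order $n^{1-\frac1\alpha-2\gamma}$ give $\frac{\eta_n}{t\sqrt L}\le c\,n^{\theta}$ with $\theta=\frac{(\alpha-1)\beta-2}{2\alpha\beta}+\bigl(\frac{2\alpha}{\beta}+1\bigr)\gamma-\varepsilon$, and $\theta<0$ because $(\alpha-1)\beta\le2$ while $\varepsilon>\bigl(\frac{2\alpha}{\beta}+1\bigr)\gamma$. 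Hence ${\mathbb P}(m+B\notin\I)\le\frac14+o(1)<\frac23$ for $n$ large, uniformly in $m$ and $t$, which finishes the argument once the two cases are patched.

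I expect the main obstacle to be recognizing why this split is necessary and which tool to use on each side. The naive bound ``(number of bad integers in the support of $B$)$\times\sup_j{\mathbb P}(B=j)$'' actually diverges: there are too many forbidden intervals when $t$ is of order $1$, and each forbidden interval is too wide when $t$ is small — the failure already occurs at $\alpha=\beta=2$. Restricting to the bulk of $B$ (the regime $t<2\eta_n$) cancels the spurious $\sqrt L$ against the $L^{-1/2}$ of the local limit theorem, so that only the vanishing density $\eta_n$ survives. For $t$ of order $1$ a period-by-period count of bad integers would require controlling how well $P=\frac{2\pi}{dt}$ is approximated by rationals; the soft ``no two consecutive integers are both bad'' argument, combined with the total-variation estimate $\|\mathrm{Law}(B)-\mathrm{Law}(B+1)\|_{\mathrm{TV}}=O(L^{-1/2})$ coming from unimodality, bypasses this number-theoretic difficulty and is the route I would take (the regime $t\ge2\eta_n$).
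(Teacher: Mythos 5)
Your proof is correct, but it follows a genuinely different route from the paper's. The paper proves a separate comparison lemma (Lemma~\ref{sec:lem0}) exploiting the monotonicity of the binomial PMF on each side of its mode together with the fact that, under the standing hypotheses, each good interval $I_k$ contains at least as many integers as each bad interval $J_k$; summing over $k$ gives $\PP(H+b_n\in\I)\ge\frac12\bigl(1-\PP(H+b_n\in J_{k_0+1}\cup J_{k_1})\bigr)$, and the two residual bad intervals near the mode are then handled by the Berry--Esseen inequality. You bypass both ingredients by splitting on whether the bad intervals have length $\le 1$ ($t\ge 2\eta_n$) or $>1$ ($t<2\eta_n$). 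In the first regime, the observation that no two consecutive integers can both be bad, combined with the total-variation bound $\sum_j|\PP(B=j)-\PP(B=j-1)|=2\sup_j\PP(B=j)=O(L^{-1/2})$ obtained from unimodality by telescoping, yields $\PP(m+B\in\I)\ge\frac12-O(L^{-1/2})$ with no interval-by-interval comparison at all. In the second regime, Chebyshev confines $B$ to a window of width $O(\sqrt L)$ around its mean, and a direct count of bad integers there, multiplied by $\sup_j\PP(B=j)=O(L^{-1/2})$, leaves a contribution $O(\eta_n)+O(\eta_n/(t\sqrt L))$; the second term is governed by precisely the same exponent $\theta<0$ that the paper extracts for $M_n-m_n$, under identical constraints on $\gamma$ and $\varepsilon$. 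So your version is more elementary (no Berry--Esseen and no monotone comparison lemma) at the price of a two-case split, and --- as you correctly note --- neither half alone covers all $t$: the no-two-consecutive-bad-integers fact fails when $J_k$ is wide, while the direct count over the full support of $B$ diverges when $t$ is of order $1$, which is exactly why the split earns its keep.
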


\noindent 
Assume for a moment that this lemma holds true 
and let us finish now the proof of Proposition \ref{sec:step3}. Lemmas 
\ref{independance} and \ref{sec:liminf} ensure that conditionally to 
$\E_n$ and $((N_n^0(Y_i+MN),i\ge 1)$, the events 
$\{N_n(Y_i+MN)\in\I\}$, $i\ge 1$, are independent of each other, and all happen
with probability at least $1/3$.
Therefore, since $\Omega_n\cap\D_n\subseteq \E_n$,
there exists $c>0$, such that
$${\mathbb P}\left(\Omega_n \cap \D_n,
\ \#\{i\ :\ N_n(Y_i+MN)\in{\mathcal I}\}\le
\frac{n^{\frac 1 \alpha-\alpha\gamma}} 4\right)
\le {\mathbb P}\left(B_n\le \frac{n^{\frac 1 \alpha-\alpha\gamma}}4\right) 
=o(\exp(-cn)),
$$
where for all $n\ge 1$, $B_n$ has binomial distribution
${\mathcal B}\left(\left\lfloor {n^{\frac 1 \alpha -\alpha\gamma}}\right\rfloor;
\frac 13\right)$.

\noindent But if $\#\{z\ :\ N_n(z)\in\I\}\ge
n^{\frac 1 \alpha-\alpha\gamma}/4 $,
then by \eqref{majodist} and \eqref{Gt} there exists a constant $c>0$, such that
$$\prod_z\vert\varphi_\xi(tN_n(z))\vert
\le \exp\left(-c n^{\frac 1 \alpha-\alpha\gamma}n^{-\frac 1 \alpha+2\alpha\gamma}\right),
$$
which proves Proposition \ref{sec:step3}.

\begin{proof}[Proof of Lemma \ref{sec:liminf}]
First notice that by Lemma \ref{independance}, for any $H\ge 0$,
\begin{eqnarray}
\label{H}
{\mathbb P}(N_n(Y_i+MN)\in\I \mid \E_n,\ N_n^0(Y_i+MN)=H)={\mathbb P}\left(H+b_n\in\I\right),
\end{eqnarray}
where $b_n$ is a random variable with
binomial distribution
${\mathcal B}\left(\left\lfloor {n^{1-\frac 1 \alpha -2\gamma}}\right\rfloor; \frac 12\right)$.
We will use the following result whose proof is postponed.

\begin{lem}\label{sec:lem0}
Under the hypotheses of Proposition \ref{sec:step3}, for
every $t\in\ (n^{-1+\frac 1 \alpha +\varepsilon},\pi/d)$ and 
for $n$ large enough, the following holds:
\begin{itemize}
\item[(i)] For any integer $k$ such that all the elements of
$I_k-H$ are smaller than $\frac 12
\left\lfloor {n^{1-\frac 1 \alpha -2\gamma}} \right\rfloor$, 
$${\mathbb P}(b_n\in (I_k-H))\ge
   {\mathbb P}(b_n\in (J_k-H)) . $$
\item[(ii)] For any integer $k$ such that all the elements of
$I_{k}-H$ are larger than $\frac 12
\left\lfloor {n^{1-\frac 1 \alpha -2\gamma}} \right\rfloor$, 
$${\mathbb P}(b_n\in (I_k-H))\ge
  {\mathbb P}(b_n\in (J_{k+1}-H)) . $$
\end{itemize}
\end{lem}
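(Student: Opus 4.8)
The plan is to compare the two probabilities by an order-preserving rearrangement of the binomial mass function. Write $m:=\floor{n^{1-\frac1\alpha-2\gamma}}$, so $b_n\sim\mathcal B(m,\tfrac12)$, with mass function $p_j=\binom mj2^{-m}$ for $0\le j\le m$, extended by $p_j:=0$ otherwise; from $p_{j+1}/p_j=(m-j)/(j+1)$ one sees that $(p_j)_{j\in\ZZ}$ is non-decreasing on $(-\infty,m/2]$ and non-increasing on $[m/2,+\infty)$. Fix $H$ and recall the geometry behind $I_k,J_k$: set $P:=\frac{2\pi}{dt}$ (so $P>2$ since $t<\pi/d$) and $\epsilon_n:=n^{-\frac1{\alpha\beta}+\frac{2\alpha\gamma}\beta}$; then $J_k-H$ and $I_k-H$ are adjacent with $J_k-H$ on the left and $(J_k-H)\cup(I_k-H)$ an interval of length exactly $P$, while $I_k-H$ and $J_{k+1}-H$ are adjacent with $I_k-H$ on the left and $(I_k-H)\cup(J_{k+1}-H)$ of length $P$; moreover $|J_k|=|J_{k+1}|=2\epsilon_n/t$. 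Since the hypothesis $\gamma<1/(2\alpha^2)$ makes the exponent of $\epsilon_n$ negative, $\epsilon_n\to0$ and $\theta_n:=|J_k|/P=\tfrac d\pi\epsilon_n\to0$.

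The heart of the argument is a counting estimate: for $n$ large enough, uniformly in $t\in(n^{-1+\frac1\alpha+\varepsilon},\pi/d)$, in $H$ and in $k$, the interval $I_k-H$ contains at least as many integers as $J_k-H$ (equivalently, as $J_{k+1}-H$). I would prove this using that an interval of length $L$ contains between $L-1$ and $L+1$ integers. Writing $N_I,N_J$ for the numbers of integers in $I_k-H,J_k-H$, the union has length $P$, so $N_I\ge P-1-N_J\ge(1-\theta_n)P-2$. I would then split cases: if $|J_k|<1$ then $N_J\le1$ and $N_I\ge P-2>0$, so $N_I\ge1\ge N_J$; if $|J_k|\ge1$ then $t\le2\epsilon_n$, hence $P\ge\pi/(d\epsilon_n)\to+\infty$, so for $n$ large $\theta_n\le\tfrac14$ and $P\ge12$, whence $N_I-N_J\ge(1-2\theta_n)P-3\ge\tfrac12P-3>0$. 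The same estimate holds with $J_{k+1}-H$ in place of $J_k-H$.

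Granting this, each item follows by shifting a block of the support. For (i): the hypothesis puts every element of $I_k-H$ below $m/2$, and since $J_k-H$ lies to its left, every element of $J_k-H$ is below $m/2$ too; because $(J_k-H)\cup(I_k-H)$ is an interval, its integers form a consecutive block whose prefix $a,\dots,a+N_J-1$ lies in $J_k-H$ and whose suffix $a+N_J,\dots,a+N_J+N_I-1$ lies in $I_k-H$, with $N_I\ge N_J$. Hence
\[
\PP(b_n\in I_k-H)=\sum_{l=0}^{N_I-1}p_{a+N_J+l}\ \ge\ \sum_{l=0}^{N_J-1}p_{a+N_J+l}\ \ge\ \sum_{l=0}^{N_J-1}p_{a+l}\ =\ \PP(b_n\in J_k-H),
\]
the last inequality because $p$ is non-decreasing on $(-\infty,m/2]$ and $a+l<a+N_J+l\le m/2$. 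Item (ii) is the mirror image: every element of $I_k-H$ is above $m/2$, hence so is every element of $J_{k+1}-H$ lying to its right; the integers of $(I_k-H)\cup(J_{k+1}-H)$ form a block with prefix in $I_k-H$ and suffix in $J_{k+1}-H$, and the analogous computation, shifting the $J_{k+1}$-block to the left into $I_k-H$ and using that $p$ is non-increasing on $[m/2,+\infty)$, gives $\PP(b_n\in I_k-H)\ge\PP(b_n\in J_{k+1}-H)$.

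The main obstacle is the counting estimate when $t$ is close to $\pi/d$: there $P$ is barely above $2$, the intervals contain only a few integers, and the rounding from lengths to integer counts could a priori reverse the desired inequality — this is exactly what forces the dichotomy $|J_k|<1$ versus $|J_k|\ge1$, the second branch being rescued by the blow-up of $P$ forced by $t\le2\epsilon_n$. I would also take care that the threshold on $n$ in the counting estimate depends only on $\alpha,\beta,\gamma,\varepsilon,d$, since the lemma is asserted uniformly in $t$ (and in $H$ and $k$).
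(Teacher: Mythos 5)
Your proof is correct and follows essentially the same strategy as the paper: exploit the unimodality of the binomial mass, reduce to the combinatorial inequality $N_I\ge N_J$ between integer counts, then transport mass. The paper reaches $N_I\ge N_J$ in one stroke, showing $\floor{|I_k|}\ge\ceil{|J_k|}$ via the chain $\floor{2\pi/(dt)-2\epsilon_n/t}\ge\floor{3\pi/(2dt)}\ge 1+\floor{\pi/(2dt)}\ge\ceil{\pi/(2dt)}\ge\ceil{2\epsilon_n/t}$ (using $\pi/(dt)\ge1$ and $2\epsilon_n<\pi/(2d)$ for $n$ large), and then compares probabilities by pivoting at a single point $z_k$, the largest integer of $J_k-H$; you instead get $N_I\ge N_J$ via a case split on $|J_k|<1$ versus $|J_k|\ge1$ and finish with a term-by-term shift of the $J_k$-block into $I_k$. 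Both executions are valid and differ only cosmetically; the paper's is a bit more compact, while yours makes the dependence on the regimes of $t$ more explicit.
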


\noindent Now call $k_0$ the largest integer satisfying
the condition appearing in (i) and $k_1$ the smallest integer satisfying
the condition appearing in (ii).
We have $k_1=k_0+1$ or $k_1=k_0+2$. According to Lemma \ref{sec:lem0},
we have
\begin{eqnarray*}
{\mathbb P}\left(H+b_n\in\I\right)
& \ge & \sum_{k \le k_0} \PP\pare{H+b_n\in I_k} + 
		\sum_{k \ge k_1} \PP\pare{H+b_n \in I_k}
\\
&\ge &  \sum_{k \le k_0} \PP\pare{H+b_n \in J_k} 
	+  \sum_{k \ge k_1} \PP\pare{H+b_n \in J_{k+1}} 
\\
& = &   {\mathbb P}(H+b_n\not\in\I)-{\mathbb P}
(H+b_n\in J_{k_0+1}\cup J_{k_1}). 
\end{eqnarray*}
Hence, 
\[ \PP \pare{H+b_n \in \I} 
\geq \frac 1 2 \cro{ 1 - \PP(H+b_n\in J_{k_0+1}\cup J_{k_1})} \, .
\] 
Let $\bar{b}_n := 2 \pare{b_n - \frac 1 2 \floor{n^{1-\frac 1 \alpha -2\gamma}}}
\floor{n^{1-\frac 1 \alpha -2\gamma}}^{-1/2}$, so that $\bar{b}_n$ converges
in distribution to a standard normal variable, whose distribution function
is denoted by $\Phi$. The interval $J_{k_1}$ being 
of length $2n^{-\frac{1}{\alpha \beta} + \frac{2\alpha\gamma}{\beta}}/t$, 
\begin{eqnarray*} 
\PP(H+b_n \in J_{k_1}) & = & \PP(\bar{b}_n \in [m_n,M_n])
\, , \mbox{ with } M_n-m_n = 4 \frac{n^{-\frac{1}{\alpha \beta} + \frac{2\alpha\gamma}{\beta}}}
{t \sqrt{\floor{n^{1-\frac 1 \alpha - 2\gamma}}}} 
\\
& \leq & \Phi(M_n) - \Phi(m_n) + 
\frac{C}{\sqrt{n^{1-\frac 1 \alpha - 2 \gamma}}}
\,  \mbox{ (by the Berry--Esseen inequality) }
\\
& \leq & \frac {M_n-m_n}{\sqrt{2\pi}} 
+ \frac{C}{\sqrt{n^{1-\frac 1 \alpha - 2 \gamma}}} 
\\
& \leq & C' n^{\frac 12 +\frac 1 {2\alpha}+\gamma+\frac{2\alpha\gamma}{\beta}-\frac{1}{\alpha\beta}-\frac 1 \alpha  - \varepsilon} 
+ \frac{C}{\sqrt{n^{1-\frac 1 \alpha - 2 \gamma}}} 
\, , 
\end{eqnarray*} 
for $t \ge n^{-1+\frac 1 \alpha+\varepsilon}$, and some constants $C>0$ and $C'>0$. 
Since $\alpha \le 2$, $\beta\le 2$, $\gamma < \frac 1 2 \frac{\alpha-1}{\alpha}$,
 and $\varepsilon>2\alpha\gamma/\beta + \gamma$ by hypothesis, 
we conclude that $\PP(H+b_n \in J_{k_1}) = o(1)$. The same holds for  
$\PP(H+b_n \in J_{k_0+1})$, so that for $n$ large enough, 
\[ \PP \pare{H+b_n \in \I} 
\geq \frac 1 2 \cro{ 1 - o(1)} \geq \frac 1 3 \, .
\]
Together with \eqref{H}, this concludes the proof of Lemma \ref{sec:liminf}. 
\end{proof}

\begin{proof}[Proof of Lemma \ref{sec:lem0}]
We only prove (i), since (ii) is similar.
So let $k$ be an integer such that all the elements of
$I_{k}-H$ are smaller than 
$\frac 12\left\lfloor {n^{1-\frac 1 \alpha -2 \gamma}}\right\rfloor$.
Assume that $(J_k-H)\cap \mathbb Z$ contains at least one nonnegative integer
(otherwise ${\mathbb P}(b_n\in (J_k-H))=0$  and there is nothing to prove).
Let $z_k$ denote the greatest integer in $J_k - H$, so that by our
assumption $\PP(b_n = z_k) > 0$ (remind that $0 \le z_k 
<\frac 12\left\lfloor {n^{1-\frac 1 \alpha - 2 \gamma}}\right\rfloor$). By monotonicity of the function
$z \mapsto  \PP(b_n=z)$, for $z\le \frac 12\left\lfloor n^{1-\frac 1 \alpha - 2 \gamma}\right\rfloor$, we get 
\[ 
\PP (b_n \in J_k - H) \leq \PP(b_n = z_k) \#((J_k -H)\cap \ZZ)
\leq \PP(b_n = z_k)  \ceil{\frac{ 2n^{-\frac 1 {\alpha\beta} + \frac{2\alpha\gamma}{\beta}}}{t}}
\, .
\]
In the same way,  
\[
\PP (b_n \in I_k - H) \geq  \PP(b_n = z_k) \#((I_k -H)\cap \ZZ)
\geq  \PP(b_n = z_k)  \floor{\frac {2\pi}{d t} -  
\frac{2n^{-\frac 1 {\alpha\beta} + \frac{2\alpha\gamma}{\beta}}}{t}}  
\, .
\]
Hence 
$$
\PP (b_n \in I_k - H) \geq \frac{ \floor{\frac{2\pi}{ d t}-\frac{2 n^{-\frac 1 {\alpha\beta} + 
\frac{2\alpha\gamma}{\beta}}}{t}}}
{\ceil{\frac{ 2n^{-\frac 1 {\alpha\beta} + \frac{2\alpha\gamma}{\beta}}}{t}}} 
\PP (b_n \in J_k - H)
\, .
$$
But $\pi/(dt)\ge 1$ and $2\alpha^2\gamma<1$ by hypothesis. It follows immediately that 
for $n$ large enough, we have $2n^{-\frac 1{\alpha\beta}+\frac{2\alpha\gamma}\beta}
<\pi/(2 d)$, 
and so
%$2n^{-\frac 1{\alpha\beta}+\frac{3\eta}\beta}<\pi/d$ and
$$
\floor{\frac{2\pi}{ d t}-\frac{2 n^{-\frac 1 {\alpha\beta} + \frac{2\alpha\gamma}{\beta}}}{t}}
\ge \floor{\frac {3\pi}{2dt}}\ge 1+\floor{\frac \pi{2dt}} \ge \ceil{\frac\pi{2dt}}\ge
\ceil{\frac{ 2n^{-\frac 1 {\alpha\beta} + \frac{2\alpha\gamma}{\beta}}}{t}}\, .
%\frac{\floor{\frac{\pi}{dt}}}{\ceil{\frac\pi{dt}}}\ge 1.
$$
This concludes the proof of the lemma. 
\end{proof}
%
%
%
%%%%%%%%%%%%%%%%%%%%%%%%%%%%%%%%%%%%%%%%%%%%%%%%%%%%%%%%%%%%%%%%%%%%%%%%%%%%%%%%%%%%%%%%%%%%%%%%%%%%%%%%%%%%%%%%%%%%%%%%%%%%%%%
%
\section{Lattice case, $\alpha<1$: Proof of Theorem \ref{thmTLL2}} 

We only sketch the proof, since it is very similar and simpler than in the case 
$\alpha>1$. In particular we keep the same notation, for instance for $N_n^*$,
$R_n$, $V_n$, $\varepsilon_0$,...

\noindent We first introduce the analogue $\Omega'_n$ of $\Omega_n$: 
$$\Omega'_n=\Omega'_n(\varepsilon):= \left\{N_n^*\le n^\varepsilon\right\},$$
which is well defined for any $\varepsilon$. Note that on $\Omega'_n$, we have 
\begin{eqnarray}
\label{RnVn}
 V_n\ge R_n \ge n^{1-\varepsilon}.
\end{eqnarray}
Since $N_n^*=\sup_{k=0}^{n-1} \left[N_n(S_k)-N_k(S_k)\right]$, we obtain that 
\[
\PP \pare{N_n^{*} \geq n^{\varepsilon}} \leq n \PP \pare{\sup_mN_{m}(0)
\geq n^{\varepsilon}} \leq np_0^{n^\varepsilon-1},
\]
where $p_0:={\mathbb P}\pare{\exists k\ge 1\ : \ S_k=0}$. Since $\alpha<1$, the random walk $S$ is transient and $p_0<1$. It follows 
  that $\PP(\Omega'_n)=1-o(\exp(-n^{c}))$, for some constant $c>0$, and 
we can restrict our study to this set.  
Moreover, it is known (see for instance the introduction in \cite{KestenSpitzer} for an argument) that  
$$\frac 1 n V_n=\frac{1}{n}\sum_{y\in \ZZ} N_n^\beta(y) \mathop{\longrightarrow}
     _{n\rightarrow +\infty} \EE[\widetilde N_\infty^{\beta-1}(0)]=r^{-\beta
}\ \ \mbox{\rm a.s..}$$
We claim now that $\left(n^{1/\beta}V_n^{-1/\beta},n\geq 1\right)$
is uniformly integrable.
Indeed, if $\beta \ge 1$, this comes from the fact
that $V_n$ is larger than $n$, and
when $\beta<1$, this follows from the 
\begin{lem}
If $\beta<1$, there exists $\gamma >0$ such that 
\begin{equation} \label{UIa<1} 
\sup_n \EE \cro{\exp\pare{ \gamma \frac{n}{V_n} }}< \infty \, .
\end{equation} 
\end{lem}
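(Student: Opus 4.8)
The plan is to reduce the statement to the behaviour of the range $R_n$ and to exploit its linear growth. First, since $\beta<1$ and $N_n(y)$ is a nonnegative integer, we have $N_n^\beta(y)\ge {\bf 1}_{\{N_n(y)\ge1\}}$, hence $V_n=\sum_yN_n^\beta(y)\ge R_n$ pointwise; consequently $\exp(\gamma n/V_n)\le\exp(\gamma n/R_n)$ and it is enough to exhibit $\gamma>0$ with $\sup_n\EE[\exp(\gamma n/R_n)]<\infty$. Since $\alpha<1$ the walk $S$ is transient, so that $q:=\PP(S_k\ne0\text{ for all }k\ge1)=1-p_0\in(0,1)$, and Kingman's subadditive ergodic theorem applied to the subadditive sequence $R_n$ gives $R_n/n\to q$ almost surely.

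The key ingredient is a super-multiplicative bound for the \emph{lower} tail of $R_n$. Writing $R_m\circ\theta_n:=\#\{S_n,\dots,S_{n+m-1}\}$, which is independent of $\sigma(X_1,\dots,X_n)$, the inclusions $\{R_{n+m}\le k\}\subseteq\{R_n\le k\}\cap\{R_m\circ\theta_n\le k\}$ yield $\PP(R_{n+m}\le k)\le\PP(R_n\le k)\,\PP(R_m\le k)$, hence $\PP(R_{\ell m}\le k)\le\PP(R_m\le k)^\ell$ for every $\ell\ge1$. I then fix an integer $K$ with $K>1/q$ and $K\ge2$, and set $\theta:=\sup_{k\ge1}\PP(R_{Kk}\le k)$. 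One checks that $\theta<1$: indeed $\PP(R_{Kk}\le k)=\PP(R_{Kk}/(Kk)\le1/K)\to0$ as $k\to\infty$ by the law of large numbers, while for each fixed $k$ one has $\PP(R_{Kk}\le k)<1$ since the walk visits $Kk>k$ distinct sites in $Kk$ steps with positive probability (take $Kk$ consecutive increments equal to some value charged by $X_1$). Using $R_n\ge R_{\lfloor n/(Kk)\rfloor\, Kk}$ this gives, for all $1\le k\le n/K$,
\[
\PP(R_n\le k)\le\theta^{\lfloor n/(Kk)\rfloor}.
\]

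To conclude I would decompose $\EE[\exp(\gamma n/R_n)]$ dyadically in $R_n$. On $\{R_n>n/2\}$ the integrand is at most $e^{2\gamma}$. On $\{n/2^{\ell+1}<R_n\le n/2^\ell\}$ with $\ell\ge1$ it is at most $e^{\gamma2^{\ell+1}}$, and $\PP(R_n\le n/2^\ell)\le\theta^{\lfloor2^\ell/K\rfloor}$ by the display above; for the finitely many indices $\ell$ with $2^\ell<2K$ this contributes a bounded amount independent of $n$, while for $2^\ell\ge2K$ the $\ell$-th term is bounded by $\exp\!\big(2^\ell(2\gamma-\tfrac1{2K}\log(1/\theta))\big)$, which is summable in $\ell$ uniformly in $n$ as soon as $\gamma<\tfrac1{4K}\log(1/\theta)$. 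Adding up the contributions yields $\sup_n\EE[\exp(\gamma n/V_n)]<\infty$ for such $\gamma$.

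The point to be careful about is the uniformity in $k$ of the estimate $\PP(R_{Kk}\le k)\le\theta<1$ (this is exactly what makes the threshold in the super-multiplicative bound grow linearly in $n$), together with checking that the resulting geometric-type sum over dyadic scales converges at a rate not depending on $n$; the transience of $S$, i.e.\ $q>0$, enters precisely through the admissible choice $K>1/q$.
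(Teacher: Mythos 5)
Your argument is correct, and it is a genuinely different route from the one in the paper. You observe that for integer local times and $\beta<1$ one has $V_n\ge R_n$ pointwise, so it suffices to control $\EE[\exp(\gamma n/R_n)]$; you then run a lower-tail concentration argument for the range, using the sub-multiplicative inequality $\PP(R_{n+m}\le k)\le\PP(R_n\le k)\PP(R_m\le k)$ (the lower-tail analogue of \eqref{SousaddRange}), the law of large numbers $R_n/n\to q>0$ (transience, $\alpha<1$), and a dyadic decomposition of the expectation. The paper instead bounds $n/V_n$ by Cauchy--Schwarz as $n/V_n\le\frac1n\sum_xN_n^2(x)$, rewrites this as the time-average $\frac1n\sum_{k=0}^{n-1}N_n(S_k)$, applies Jensen's inequality to push the exponential inside the sum, and dominates each term $\EE[\exp(\gamma N_n(S_k))]$ by $\EE[\exp(\gamma\widetilde N_\infty(0))]$, which is finite because for a transient walk $\widetilde N_\infty(0)$ is one plus a sum of two independent geometric variables. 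The paper's proof is shorter and hinges on the clever identity $\sum_xN_n^2(x)=\sum_kN_n(S_k)$ together with Jensen; yours is longer but more elementary, does not need the Jensen step or the stochastic domination by a full-time local time, and instead gives explicit exponential concentration for the lower tail of $R_n$, an estimate that is of the same flavour as (and would slot naturally alongside) Lemma~\ref{lem:omega'_n} in the appendix. Both proofs localize the source of finiteness in the transience of the walk: yours through $q>0$ in the choice of $K$, the paper's through the geometric tail of the number of returns.

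Two small remarks for polish. When you write $\PP(R_n\le n/2^\ell)\le\theta^{\lfloor 2^\ell/K\rfloor}$ you are implicitly setting $k=\lfloor n/2^\ell\rfloor$ in the intermediate inequality and using monotonicity of $\ell\mapsto\lfloor n/(K\lfloor n/2^\ell\rfloor)\rfloor$; this is correct but worth spelling out, since $\PP(R_n\le n/2^\ell)=\PP(R_n\le\lfloor n/2^\ell\rfloor)$ and the bound requires $1\le\lfloor n/2^\ell\rfloor\le n/K$, which holds exactly in the range $\ell$ you use (for larger $\ell$ the event is empty). Also, the justification that $\PP(R_{Kk}\le k)<1$ for every fixed $k$ requires the existence of a nonzero value $N$ charged by $\mu_X$; that is automatic here since the walk is transient, but it deserves a word.
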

\begin{proof}
Since $n = \sum_x N_n(x)$, H\"{o}lder's inequality gives
\[ \frac{n}{V_n} \leq \frac{\sum_x N_n(x)^2}{n} \, .
\]
Since  
$$\frac{1}{n} \sum_x N_n(x)^2=\frac{1}{n}\sum_{k=0}^{n-1} N_n(S_k)\, ,$$
Jensen's inequality gives
\[
\exp \pare{\gamma \frac{\sum_x N_n(x)^2}{n}}
\leq \frac{1}{n} \sum_{k=0}^{n-1} \exp \pare{\gamma N_n(S_k)} \, .
\]
Hence, 
\begin{eqnarray*}
\EE \cro{\exp \pare{\gamma \frac{\sum_x N_n(x)^2}{n}}}
& \leq  & \frac{1}{n} \sum_{k=0}^{n-1} \EE \cro{\exp \pare{\gamma N_n(S_k)}} 
\\
&\leq &  \EE \cro{\exp \pare{\gamma \widetilde N_{\infty}(0)}}
\, .
\end{eqnarray*}
Then, (\ref{UIa<1}) directly follows from the fact that 
$\widetilde N_{\infty}(0)$ is equal to 1 plus the sum of two independent
geometric variables with positive parameter, and thus has finite exponential moments.
\end{proof}
Let $\varepsilon>0$ and $\eta>0$ be such that $\eta +\varepsilon < 1/\beta$ 
and $\varepsilon < \eta \beta< 1/2$.
As in the proof of Proposition \ref{lem:equivalent}, we deduce that 
$$
\frac d {2\pi} \int_{|t| \le n^{-\frac{1}{\beta}+ \eta}}
e^{-it \floor{n^{\frac{1}{\beta}} x}}  \EE \cro{\prod_y \varphi_{\xi}(tN_n(y))} \, dt 
= \frac{D(x)}{n^{\frac 1 \beta}} + o(n^{- \frac 1 \beta}) \, ,
$$
where the $o(n^{-1/\beta})$ is uniform in $x$.
It remains to prove that 
\begin{equation}\label{sec:EQ0} 
\frac d{2\pi}\int_{ n^{-\frac 1\beta+\eta} }
    ^{\frac \pi {d}}
   \left\vert {\mathbb E}\left[
   \prod_y\varphi_\xi(tN_n(y)){\mathbf 1}_{\Omega'_n}\right]\right\vert\, dt=
   o(n^{-\frac 1\beta}).
\end{equation}
As in the proof of Proposition \ref{sec:step3} (see the beginning of Section 2.8. for the definitions of $\mathcal{D}_n$, $C_n, C_n(y),\ldots),$ we are led to prove that
$$\frac d{2\pi}\int_{ n^{-\frac 1\beta+\eta}}^{\frac \pi d}
   \left\vert {\mathbb E}\left[
   \prod_y\varphi_\xi(tN_n(y)){\mathbf 1}_{\Omega'_n\cap{\mathcal D}_n}
  \right]\right\vert\, dt=
   o(n^{-\frac 1\beta}).$$
Let $p'_n:=\#\{y\ :\ C_n(y)\ge 1\}$ be the random variable equal to the number of sites
of $\mathbb Z$ on which at least one peak is based.
Let us notice that on $\Omega'_n\cap\D_n$,
we have 
$$\frac{np}{2T}\le C_n=\sum_{y}C_n(y)\le\sum_{y\ :\ C_n(y)\ge 1} N_n(y)\le
       p'_n n^\varepsilon\, .$$ 
Thus $\Omega'_n\cap{\mathcal D}_n\subseteq{\mathcal E}'_n$, 
where ${\mathcal E}'_n:=\{p'_n\ge c_0 n^{1-\varepsilon}\}$, for a well chosen constant 
$c_0>0$.
As in the proof of Proposition \ref{sec:step3}, we construct $(Y_i)_i$
such that $C_n(Y_i)\ge 1$ and $Y_{i+1}-Y_i>MN$.
For every $i$, we define $N_n^0(Y_i+MN)$ as the number of visits to the site
$Y_i+MN$ without taking into account the possible visit during the first peak
based on $Y_i$.
Next we see that, on ${\mathcal E}'_n$, $(N_n(Y_i+MN)-N_n^0(Y_i+MN), i\le
c_0 n^{1-\varepsilon})$ is a sequence of i.i.d. random variables with
Bernoulli distribution with parameter $1/2$.

\noindent Let $t \in \left[ n^{-\frac 1\beta+\eta} 
,\frac\pi {d}\right]$.
We define the good and bad intervals respectively by
$$I'_k:=\left[\frac{2k\pi}{dt}+\frac {1}2,\frac{2(k+1)\pi}
    {dt}-\frac {1}2\right] 
  \ \mbox{and}\ J'_k:=\left(\frac{2k\pi}{dt}-\frac {1}2,\frac{2k\pi}
    {dt}+\frac {1}2\right).$$
Set also $\I':=\bigcup_{k\in\mathbb Z}I'_k$.
We observe that $J'_k$ is an open interval of length $1$
and $I'_k$ is a closed interval of length $2\pi/(dt)-1\ge 1$
(since $t\le \pi/d$).
Hence, if $N_n^0(Y_i+MN)$ is not in $\I'$, then
$N_n^0(Y_i+MN)+1$ is in $\I'$. This ensures that,
on $\E'_n$, $N_n(Y_i+MN)$ belongs to
$\I'$ with probability at least $1/2$.
Therefore, as after Lemma \ref{sec:liminf}, we get
$${\mathbb P}\left(\Omega_n\cap{\mathcal D}_n; \#\{i\ :\ N_n(Y_i+MN)\in\I'\}
     <\frac{c_0 n^{1-\varepsilon}}3\right) =o(n^{-\frac 1\beta}). $$
Hence, we just have to prove that
$$\frac d{2\pi}\int_{  n^{-\frac 1\beta+\eta}   }^{\frac \pi {d}}
   \left\vert {\mathbb E}\left[
   \prod_y\varphi_\xi(tN_n(y)){\mathbf 1}_{{\mathcal H}_{n,t}}
  \right]\right\vert\, dt=
   o(n^{-\frac 1\beta}),$$
with $    {\mathcal H}_{n,t}:=\left\{ \#\{y\ :\ N_n(y)\in\I'\}
     \ge \frac{c_0 n^{1-\varepsilon}}3 \right\}$. 
As after Lemma \ref{independance}, we notice that, if $n$ is large enough,
we have
$$d\left(u,\frac {2\pi}d{\mathbb Z}\right)\ge \frac{
 n^{-\frac 1\beta+\eta}  } 2
 \ \ \Rightarrow\ \   \vert\varphi_\xi(u)\vert\le \exp\left(
       -\frac{\sigma}{2^\beta}n^{-1+\beta\eta}\right) .$$
We notice also that if $N_n(y) \in \I'$, then 
 $d\left(tN_n(y),\frac {2\pi}d{\mathbb Z}\right)\ge t/2$, and thus
$d\left(tN_n(y),\frac{2\pi}d{\mathbb Z}\right)\ge 
  n^{-\frac 1\beta+\eta} /2$.
Now, on ${\mathcal H}_{n,t}$, we know that at least $c_0 n^{1-\varepsilon}/3$
sites $y$ satisfy this property. Therefore
$$
\left\vert {\mathbb E}\left[
   \prod_y\varphi_\xi(tN_n(y)){\mathbf 1}_{{\mathcal H}_{n,t}}
  \right]\right\vert\le
\exp\left( -\frac{c_0\sigma}{2^\beta 3}
   n^{1-\varepsilon}n^{-1+\beta\eta}\right)=o(n^{-\frac 1\beta}),
$$
since $\varepsilon<\beta\eta$. This gives \eqref{sec:EQ0} and achieves the proof of Theorem 2.
\hfill $\square$

%%%%%%%%%%%%%%%%%%%%%%%%%%%%%%%%%%%%%%%%%%%%%%%%%%%%%%%%%%%%%%%%%%%%%%%%%%%%%%%%%%%%%%%%%%%%%%%%%%%%%%%%%%%%%%%%%%%%%%%%%%%%%%%%

\section{The strongly nonlattice case: Proof of Theorem \ref{nonlattice} }
We assume here that $\xi$ is strongly nonlattice. In that case, 
there exist $\varepsilon_0 >0$,
$\sigma > 0$ and $\rho <1$ such that 
\begin{itemize}
\item $|\varphi_{\xi}(u)| \leq \rho \hspace{2.4cm} \textrm{if } |u|\ge \varepsilon_0$,
\item $|\varphi_{\xi}(u)| \leq \exp(-\sigma |u|^{\beta}) \hspace{.5cm} \textrm{if } |u|<\varepsilon_0$.
\end{itemize}

\noindent 
{\bf Case $\alpha >1$.} We use here the notations of Section 2
with the hypotheses on $\gamma$, $\eta$, $\bar\eta$ and $\varepsilon$
of propositions \ref{lem:equivalent}, \ref{sec:step1}, \ref{sec:step2}
and \ref{sec:step3}.
Let $h_0$ be the density of Polya's distribution:
\[ h_0(y) = \frac1 \pi \frac{1-\cos(y)}{y^2} \, .
\]
Its Fourier transform is $\hat{h}_0(t) =(1 - |t|)_{+}$. For $\theta \in \mathbb{R}$,
let $h_{\theta}(y) = \exp(i \theta y) h_0(y)$ with Fourier transform
$\hat{h}_{\theta}(t)=\hat{h}_0(t+\theta)$. As is proved in \cite{durrett}
(see the proof of Theorem  5.4 p.114), it is enough to show
that for all $\theta \in \mathbb{R}$, 
\begin{equation}
\label{nonlattice.eq}
\lim_{n \rightarrow \infty} n^{\delta } \EE \cro{h_{\theta}(Z_n - n^\delta x)}
= C(x)\, \hat{h}_{\theta}(0) \, .
\end{equation} 
By Fourier inverse transform,
\[ 
n^{\delta } \EE \cro{h_{\theta}(Z_n - n^\delta x)}
= \frac{n^{\delta }}{2 \pi} 
 \int_{\mathbb R} e^{-iun^\delta x}
\EE \cro{\prod_{x \in \ZZ} \varphi_{\xi}(u N_n(x))} \hat{h}_{\theta}(u)\; du
\, .
\]
Since $\hat{h}_{\theta}\in L^1$, we can restrict our study to the 
event $\Omega_n$ of Lemma \ref{lem:omega_n}. 
The part of the integral corresponding to $|u| \leq n^{- \delta + \bar{\eta}}$
is treated exactly as in Proposition \ref{lem:equivalent}. The only change is that
we have to check that
\[
\lim_{n \rightarrow \infty} n^{\delta } \int_{|u| \le n^{-\delta + \bar{\eta}}}
\EE \cro{ e^{-|u|^{\beta} V_n (A_1 + i A_2 sgn(u))}{\bf 1}_{\Omega_n}} 
(\hat{h}_{\theta}(u) - \hat{h}_{\theta}(0)) \, du    = 0
\, ,
\]
which is obviously the case since  $2 \bar{\eta} < \delta$, 
using the fact that $\hat{h}_{\theta}$  is a Lipschitz function. 

Now since $\hat{h}_{\theta}$ is bounded, the part corresponding to 
$n^{-\delta + \bar{\eta}} \leq |u| 
\leq n^{-1 + \frac{1}{\alpha} + \varepsilon}$ doesn't need any
additional treatment. Actually, the proofs of Propositions \ref{sec:step1}  and \ref{sec:step2}  only use the behavior of $\varphi_{\xi}$ around $0$, which is
the same in the lattice or nonlattice case. 

We now turn our attention to the part of the integral corresponding 
to $|u| \geq n^{- 1 + \frac 1 \alpha  + \varepsilon}$ and prove that 
\begin{equation}
\label{nonlattice.pf} 
\lim_{n \rightarrow  \infty} n^{\delta} 
\int_{|u| \geq n^{- 1 + \frac 1 \alpha  + \varepsilon}}
e^{-iun^{\delta}x} \, \EE \cro{ \prod_x \varphi_{\xi}(u N_n(x)) 
{\bf 1}_{\Omega_n}}
\hat{h}_{\theta}(u) \, du = 0 \, .
\end{equation} 
To this end, note that 
\[
\va{ \EE \cro{ \prod_x \varphi_{\xi}(u N_n(x)) {\bf 1}_{\Omega_n}}}
\leq \EE \cro{\rho^{ \# \acc{x \, : \, \va{u N_n(x)} \geq \varepsilon_0}} 
{\bf 1}_{\Omega_n}} \, , 
\] 
and that  on $\Omega_n$, 
for $|u|  \geq  n^{- 1 + \frac 1 \alpha  + \varepsilon}$,
\begin{eqnarray*}
  n = \sum_x N_n(x) 
& \leq & \frac{\varepsilon_0}{|u|} R_n + 
N_n^*\,\,  \# \acc{x \, : \, \va{u N_n(x)} \geq \varepsilon_0} 
\\
& \leq & \varepsilon_0 n^{1-\varepsilon+\eta \beta/2} + n^{1-\frac 1 \alpha + \eta}
\,\,
 \# \acc{x \, : \, \va{u N_n(x)} \geq \varepsilon_0} 
\, .
\end{eqnarray*} 
Hence, since $\varepsilon > \eta \beta/2$, for $n$ large enough, on $\Omega_n$, 
and for  $|u|  \geq  n^{- 1 + \frac 1 \alpha  + \varepsilon}$,
\[
\# \acc{x \, : \, \va{u N_n(x)} \geq \varepsilon_0} 
\geq \frac 1 2 n^{\frac 1 \alpha - \eta}  \, .
\]
Therefore, for $n$ large enough,
\[
\va{ n^{\delta} 
\int_{|u| \geq n^{- 1 + \frac 1 \alpha  + \varepsilon}}
e^{-iun^{\delta}x} \EE \cro{ \prod_x \varphi_{\xi}(u N_n(x)) 
{\bf 1}_{\Omega_n}}
\hat{h}_{\theta}(u) \, du }
\leq n^{\delta} \rho^{\frac 1 2 n^{\frac 1 \alpha - \eta}} 
\int_{\mathbb R} \hat{h}_{\theta}(u) \, du \, , 
\]
which tends  to zero since $\eta < 1/ \alpha $.

\vspace{.3cm}
\noindent 
{\bf Case $\alpha <1$.} Using the notations and hypotheses on $\varepsilon,\eta,\gamma$ 
of Section 3, 
one has to prove that for all $\theta \in \mathbb{R}$ and 
all $x \in \mathbb{R}$,
\begin{equation}
\label{nonlatticetrans.eq}
\lim_{n \rightarrow \infty}
n^{1/\beta} \int_{\mathbb R} e^{-iun^{1/\beta}x} \, 
\EE \cro{\prod_x \varphi_{\xi}(u N_n(x))
{\bf 1}_{\Omega'_n}} \, \hat{h}_{\theta}(u) \, du = D(x) \hat{h}_{\theta}(0)
\, .
\end{equation} 
Again, the only change in the proof concerns the part of the integral
corresponding to $|u| \geq n^{-1/\beta + \eta}$. We use here the bound
\begin{eqnarray*} 
\va{\varphi_{\xi}(u N_n(x))} 
& \leq & \exp(- \sigma |u|^{\beta} N_n^{\beta}(x)) \,\, 
{\bf 1}_{\{\va{u N_n(x)} \leq \varepsilon_0\}} 
+ \rho  \,\, {\bf 1}_{\{\va{u N_n(x)} \geq \varepsilon_0\}}  
\\
& \leq & \exp(- \sigma n^{-1 +\eta \beta} N_n^{\beta}(x))
\,\, {\bf 1}_{\{\va{u N_n(x)} \leq \varepsilon_0\}} 
+ \rho \,\, {\bf 1}_{\{\va{u N_n(x)} \geq \varepsilon_0\}}  
\, .
\end{eqnarray*} 
If $\eta < 1/\beta$, then for $n$ large enough, $\rho \leq  
\exp(- \sigma n^{-1 +\eta \beta})$. Therefore, if $n$ is large enough, then for 
all $x$ and $u$ such that $N_n(x) \geq 1$ and $|u| \geq n^{-1/\beta + \eta}$, we have
\[
\va{\varphi_{\xi}(u N_n(x))} 
\leq \exp(- \sigma n^{-1 +\eta \beta} ) \, .
\]
Hence,
\[
\va{\EE\cro{\prod_x \varphi_{\xi}(u N_n(x)) \, {\bf 1}_{\Omega'_n}}}
\leq \EE\cro{\exp(- \sigma n^{-1 + \eta \beta} R_n )  {\bf 1}_{\Omega'_n}}
\leq \exp(- \sigma n^{\eta \beta - \varepsilon}) \, .
\] 
Therefore, since $\varepsilon < \eta \beta$,
\[
\lim_{ n \rightarrow \infty} 
n^{1/\beta} \int_{|u| \geq n^{-1/\beta + \eta}} 
e^{-iun^{1/\beta}x} \, \EE\cro{\prod_x \varphi_{\xi}(uN_n(x)) \, 
{\bf 1}_{\Omega'_n}} \hat{h}_{\theta}(u) \, du = 0 \, .
\]
This concludes the proof of Theorem \ref{nonlattice}. 
\hfill $\square$

%%%%%%%%%%%%%%%%%%%%%%%%%%%%%%%%%%%%%%%%%%%%%%%%%%%%%%%%%%%%%%%%%%%%%%%%%%%%%%%%%%%%%%%%%%%%%%%%%%%%%%%%%%%%%%%%%%%%%%%%%%%%%%%%%%%%%%%%%%%
%%%%%%%%%%%%%%%%%%%%%%%%%%%%%%%%%%%%%%%%%%%%%%%%%%%%%%%%%%%%%%%%%%%%%%%%%%%%%%%%%%%%%%%%%%%%%%%%%%%%%%%%%%%%%%%%%%%%%%%%%%%%%%%%%%%%%%%%%%%

\section{Random walks on randomly oriented lattices}
\subsection{Model and result}
We consider parallel moving pavements with different fixed speeds, independently chosen at the beginning with the same distribution. 
We study the random walk $(M_n, n\ge 0)$ representing the position of a walker
who at each time stays on the same moving pavement with probability $p \in (0,1)$, 
or jumps to another one with probability $1-p$.

Let us define $(M_n, n\ge 0)$ more precisely.
Let $\mu_X$ be a distribution on $\ZZ$ in the normal domain of attraction of a centered stable distribution with index $1<\alpha\le 2$ and density function denoted by $f_\alpha(\cdot)$. Let also  $\xi:=(\xi_y, y\in\ZZ)$ be a sequence of independent centered 
$\mathbb Z$-valued 
random variables with distribution $\mu_\xi$ belonging to the
normal domain of attraction of a stable distribution with index $1<\beta\leq 2$ 
and density function denoted by $f_{\beta}(\cdot)$. 
For each $y\in\mathbb Z$, $\xi_y$ will be the only horizontal
displacement allowed on line $y$.
Let $p\in (0,1)$. Given $\xi$, the random walk  $(M_n=(M_n^{(1)},M_n^{(2)}),n\ge 0)$  
is a Markov chain starting from $M_0:=(0,0)$
and such that at time $n+1$, it moves either horizontally
of $\xi_{M_n^{(2)}}$ (with probability
$p$) or makes a vertical jump with distribution $\mu_X$ (with probability $(1-p)$), i.e. 
$${\mathbb P}\left(M_{n+1}-M_n=(\xi_{M_n^{(2)}},0) \mid
      \xi,\ M_1,...,M_n\right) =p \ \ \mbox{if}\ \xi_{M_n^{(2)}}\ne 0,$$
$${\mathbb P}\left(M_{n+1}-M_n=(0,x) \mid
      \xi,\ M_1,...,M_n\right) =(1-p)\mu_X(x)\ \ \mbox{if}\ x\ne 0$$
and
$${\mathbb P}\left(M_{n+1}=M_n \mid
      \xi,\ M_1,...,M_n\right) =p+(1-p)\mu_X(0)\ \ \mbox{if}\ \xi_{M_n^{(2)}}= 0.$$
These random walks were first introduced by Campanino and P\'etritis in \cite{CP}
in the particular case when $p=1/3$ and when $\mu_X$ and $\mu_{\xi}$ are Rademacher 
distributions, i.e. take values $\pm 1$ with probability $1/2$. 
They proved the transience of 
$M$ as well as a law of large numbers. In \cite{GPLN2}, Guillotin-Plantard and Le Ny established 
the link between the Campanino and P\'etritis random walk and the random walk in random 
scenery and proved a functional limit theorem for the first one. It was also conjectured 
there that the probability of return to the origin of the Campanino and P\'etritis 
random walk is equivalent to a constant times $n^{-5/4}$. We prove this result here, as well as a generalization to the case of the random walks $M$ considered above.

\noindent To state our result, we will use 
the following representation of $M$:

\noindent Let $X:=(X_n,n\ge 1)$ be a sequence of independent random variables with distribution $\mu_X$. 
The random variable $X_n$ corresponds to the vertical move at time $n$ 
which will be chosen with probability $1-p$.
Let also $(\varepsilon_n,n\ge 0)$ be a sequence of independent Bernoulli random variables with parameter $p$, i.e. such that ${\mathbb P}(\varepsilon_1=1)=1-\PP(\varepsilon_1=0)=p$, and independent of $X$.
If $\varepsilon_n=1$, the particle $M$ moves horizontally at time $n$, otherwise
it moves vertically.
We then first define $S$ by $S_0:=0$ and  
$$S_n:=\sum_{k=1}^n  Y_k\quad \mbox{\rm for } n\geq 1\,  ,$$
where $Y_k:=X_k (1-\varepsilon_k)$. We next define $\widetilde{Z}$ by $\widetilde{Z}_0:=0$ and
$$\widetilde{Z}_n := \sum_{k=1}^n \xi_{S_{k-1}}\varepsilon_k=
    \sum_{y\in\mathbb Z} \xi_y \widetilde N_n(y)\quad \mbox{\rm for }  n\geq 1\, ,$$
where 
$$\widetilde N_n(y):=\#\{k=1,...,n\ :\ S_{k-1}=y\quad \textrm{and}\quad \varepsilon_k=1\}\, .$$
Then it is straightforward that the couple $(\widetilde{Z},S)$ has the same distribution as $M$.

\noindent We just notice that the process $S$ in this section is not exactly the same 
as in the previous sections (it is the same if we replace $X$ by $Y$).
However, we use the same notation just for convenience.

\noindent Now it is known that $(n^{-1/\alpha} S_{[nt]},t\ge 0)$ converges in distribution, 
when $n\to \infty$, to a L\'evy process $\tilde U=(\widetilde U_t,t\ge 0)$
where $\widetilde U=(1-p)^{\frac 1\alpha} U$ and 
$U$ is the process introduced in the introduction. 
We will use the fact that $(n^{-1/\alpha} S_{[nt]},t\ge 0\mid S_n=0)$ converges in distribution 
to $\widetilde U^0=(\widetilde U^0_t,t\ge 0)$ the associated bridge, i.e. 
the process $\widetilde U$ 
starting from $0$ and conditioned by $\widetilde U_1=0$. Let $(L_t^0(x),t\in [0,1], x\in{\mathbb R})$ be the local time process of $\widetilde U^0$ and set $\vert L^0\vert_{\beta}:=\left(\int_{{\mathbb R}} (L_1^0(x))^\beta\, dx\right)^{1/\beta}$.

\noindent Let $\varphi_\xi$ be the characteristic functions of $\xi_1$. Recall that $d$ is the positive integer 
such that $\{u\ :\ |\varphi_\xi(u)|=1\}=(2\pi/d){\mathbb{Z}}$. 
Let $d_0$ be the smallest positive integer $m$ such that
$\varphi_\xi(2\pi/d)^m=1$ and let $d_1$ be the greatest common divisor of
$\{m\ge 1\ :\ {\mathbb P}(X_1+...+X_m)>0\}$.

\begin{thm} \label{thrwrol} 
Assume that $d_1$ is a multiple of $d_0$, and let $E = d p^{-1}f_\alpha(0)f_\beta(0) \EE( |L^0|_\beta^{-1})$. Then, 
\begin{eqnarray*}
{\mathbb P}(M_n=(0,0))= \left\{ \begin{array}{ll} 
 E\times n^{-1-\frac 1{\alpha\beta}} + o(n^{-1-\frac 1{\alpha\beta}}) 
& \text{if $n$
 is a multiple of $d_0$;} \\
0 & \text{otherwise.}
\end{array}
\right. 
\end{eqnarray*}
\end{thm}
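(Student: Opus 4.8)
## Proof proposal for Theorem \ref{thrwrol}

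The plan is to reduce the return probability of $M$ to a local limit question for the random walk in random scenery $\widetilde Z$ conditioned on the event $\{S_n = 0\}$, and then to reproduce the machinery of Section 2 in this conditioned setting. First I would decompose according to the vertical walk: since $M_n=(0,0)$ forces $S_n=0$ (the second coordinate) and $\widetilde Z_n = 0$ (the first coordinate), and since $\widetilde Z_n$ depends on the scenery $\xi$ and on $(S,\varepsilon)$ only through the occupation field $(\widetilde N_n(y))_y$, I would write
\[
{\mathbb P}(M_n=(0,0)) = \sum_{\text{paths with } S_n=0} {\mathbb P}(S\text{-path}) \, {\mathbb P}\big(\widetilde Z_n = 0 \mid (\widetilde N_n(y))_y\big),
\]
and then apply the Fourier inversion of Lemma \ref{formule1} (adapted to the field $\widetilde N_n$ instead of $N_n$) conditionally on $\{S_n=0\}$. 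The arithmetic hypothesis ``$d_1$ multiple of $d_0$'' is exactly what is needed so that the lattice-type dichotomy of Lemma \ref{formule1} produces the clean ``$d$ if $n$ is a multiple of $d_0$, else $0$'' alternative: indeed $d_1$ governs the support of $S_n$ and $d_0$ governs the period of $\varphi_\xi(2\pi/d)^n$, so one must check that $n$ being a multiple of $d_0$ is compatible with $S_n=0$ having positive probability, which forces $n$ to be (asymptotically) a multiple of $d_1$, hence of $d_0$.

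The second ingredient is the conditional local CLT for the vertical walk: ${\mathbb P}(S_n=0) \sim c\, n^{-1/\alpha}$ with $c = (1-p)^{-1/\alpha} f_\alpha(0)$ up to the periodicity factor $d_1$ (this is the classical local limit theorem for walks in the domain of attraction of $\mathcal S_\alpha$, using that $\widetilde U = (1-p)^{1/\alpha} U$). This accounts for one factor, and contributes the $f_\alpha(0)$ and one power of $n^{-1/\alpha}$. Conditionally on $\{S_n=0\}$, the rescaled occupation field converges: $n^{-(1-1/\alpha)}\widetilde N_{n}(\lfloor n^{1/\alpha}\cdot\rfloor) \to p\, L^0_1(\cdot)$ in the appropriate sense, where the extra factor $p$ comes from thinning by the Bernoulli variables $\varepsilon_k$ and $L^0$ is the bridge local time. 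So $\widetilde V_n := \sum_y \widetilde N_n(y)^\beta$ behaves like $n^{\beta\delta}$ times (a power of $p$ times) $|L^0|_\beta^\beta$, and running the argument of Proposition \ref{lem:equivalent} verbatim — Gaussian-type approximation near $t=0$, change of variables $v = t\widetilde V_n^{1/\beta}$, uniform integrability of $n^\delta \widetilde V_n^{-1/\beta}$ — yields the dominant contribution $d\cdot p^{-1} f_\alpha(0) f_\beta(0)\, {\mathbb E}[|L^0|_\beta^{-1}]\cdot n^{-\delta - 1/\alpha}$; note $\delta + 1/\alpha = 1 + 1/(\alpha\beta)$, which is the exponent in the statement. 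The appearance of $f_\beta(0)$ rather than $f_\beta(x)$ is simply because we evaluate at the origin $x=0$. I would need to track the powers of $p$ carefully: $\widetilde N_n \approx p\, N_n$ gives $\widetilde V_n \approx p^\beta V_n$, so $\widetilde V_n^{-1/\beta} \approx p^{-1} V_n^{-1/\beta}$, producing the $p^{-1}$ in $E$.

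The remaining work is to control the tail of the Fourier integral, i.e. the analogues of Propositions \ref{sec:step1}, \ref{sec:step2}, \ref{sec:step3} in the conditioned setting. The good event $\Omega_n$ must be replaced by its conditional version $\Omega_n \cap \{S_n=0\}$, and one must check $ {\mathbb P}(\Omega_n \mid S_n=0) = 1 - o(n^{-\delta})$; since ${\mathbb P}(S_n=0)$ decays only polynomially, this reduces to knowing that ${\mathbb P}(\Omega_n^c)$ is itself $o(n^{-\delta - 1/\alpha})$, which the appendix estimates (exponential tails for the range, polynomial tails from Jain--Pruitt moment bounds for the Hölder modulus) already deliver — one just needs a sufficiently large power. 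The peak-surgery argument of Proposition \ref{sec:step3} goes through because the vertical increments $X_k$ and the Bernoulli $\varepsilon_k$ are independent, so one can still build many peaks $\mathcal C^\pm$ (using $d_1$ in place of the earlier gcd condition); the conditioning on $\{S_n=0\}$ is harmless since it can be reinstated at the end by a last-step decomposition, or one works with a two-sided bridge and uses that a bridge of length $n$ looks like an unconditioned walk on $[0, n/2]$.

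The main obstacle I anticipate is precisely the bookkeeping at the interface between the $\{S_n=0\}$ conditioning and the uniform-integrability/local-CLT steps: one must simultaneously (i) transfer the almost-sure convergence $n^\delta \widetilde V_n^{-1/\beta} \to p^{-1}|L^0|_\beta^{-1}$ and its $L^1$ strengthening to the \emph{conditional} law given $S_n=0$ — which is a convergence of bridges and requires either an absolute-continuity/entropy argument or an explicit appeal to known convergence of occupation measures of stable bridges (e.g.\ via \cite{ChenLiRosen} or \cite{LGR}) — and (ii) ensure all the $o(\cdot)$ error terms from the tail propositions survive division by ${\mathbb P}(S_n=0) \asymp n^{-1/\alpha}$. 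Both are technical rather than conceptual, and the cleanest route is probably to prove everything for the \emph{joint} quantity $n^{\delta+1/\alpha}{\mathbb P}(\widetilde Z_n=0, S_n=0)$ directly, never conditioning, so that the unconditional estimates of Section 2 apply with only cosmetic changes and the bridge local time $L^0$ emerges naturally from the local CLT for $S_n=0$ bundled into the scaling limit.
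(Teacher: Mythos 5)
Your plan follows essentially the same route as the paper: Fourier inversion for the pair $(\widetilde Z_n,S_n)$ restricted to a good event that now also controls the thinned occupation field $\widetilde N_n$, tail estimates via an adaptation of the peak surgery (the paper uses ``flat peaks'' of the form $(N,\dots,N,1,-M,\dots,-M)$ to accommodate the Bernoulli layer), the approximation $\widetilde V_n \approx p^\beta V_n$ producing the factor $p^{-1}$, a conditional invariance principle producing the bridge local time $L^0$, and conditional uniform integrability of $n^\delta\widetilde V_n^{-1/\beta}$. The arithmetic step matching $\sum_y \widetilde N_n(y)\in d_0\ZZ$ to $n\in d_0\ZZ$ on $\{S_n=0\}$ and the role of $d_0\mid d_1$ are exactly as you describe (Lemma~\ref{leminter0} in the paper).

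One caveat on your closing ``cleanest route'' of working unconditionally with $n^{\delta+1/\alpha}\PP(\widetilde Z_n=0,S_n=0)$: the bridge local time $L^0$ does not emerge ``naturally'' from an unconditional argument, since it is an intrinsically conditional object. The paper keeps the conditioning on $\{S_n=0\}$ throughout, proving the conditional invariance principle (RW1) by appealing to Liggett's bridge theorem \cite{Liggett}, establishing the conditional moment bounds (RW2) by the Markov property at $m=\lfloor n/2\rfloor$ together with Lemmas 1 and 3 of \cite{KestenSpitzer}, and then invoking Theorem 4.1 of \cite{NadineClement} to obtain conditional convergence of $V_n/n^{\delta\beta}$ to $\int_\RR (L_1^0(x))^\beta\,dx$. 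Relatedly, in your item (i) you speak of ``transferring'' the a.s.\ convergence $n^\delta\widetilde V_n^{-1/\beta}\to p^{-1}|L^0|_\beta^{-1}$ to the conditional law; but unconditionally the limit is $p^{-1}|L|_\beta^{-1}$, without the bridge. The appearance of $L^0$ is entirely a consequence of the conditioning and must be established directly for the bridge rather than transferred from the unconditional statement — this is precisely the content of the lemmas just cited. Your flag about carefully tracking powers of $p$ and $(1-p)$ through the local CLT for $S_n$ is well taken.
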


\begin{rqe} \emph{In the case of the Campanino and P\'etritis random walk, $d_0=d_1=2$. So the hypothesis of the theorem is satisfied.}
\end{rqe}

\begin{rqe} \emph{A corollary of our result is that the processes $M$ considered here are transient, this can be seen by using Borel--Cantelli lemma.}
\end{rqe}

\begin{rqe} \emph{It is most likely that an analogue result can be proved when $\alpha<1$ or $\beta\le 1$. We leave the details to the interested reader. In the same way one could certainly obtain similar estimates for the probabilities of return in $([n^\delta x],[n^{1/\alpha}y])$, with a constant $E$ depending on $x$ and $y$.}
\end{rqe}

\subsection{The event $\widetilde{\Omega}_n$}
Let $(N_n(y), y\in {\mathbb Z})$ and $R_n$ denote respectively the local time process and the range 
 of $S$ at time $n$:
\[ N_n(y) :=\#\{k=0,...,n-1\ :\ S_{k}=y \} \quad \mbox{\rm and}\quad  R_n := \# \acc{ y\ :\  N_n(y) > 0} \, . 
\]

\noindent For $\gamma>0$, set $\widetilde{\Omega}_n=\widetilde{\Omega}_n(\gamma):=\A_n\cap \B_n\cap \C_n$, where
$$\A_n:=\left\{ R_n\le n^{\frac 1\alpha+\gamma}\quad \textrm{and} \quad \sup_y N_n(y)\le 
n^{1-\frac 1\alpha+\gamma}\right\},$$
$$\B_n:=\left\{ \sum_{k=1}^n \varepsilon_k \ge \frac {np}2\right\}, $$
and
$$\C_n:= \left\{\sup_{y\neq z} \frac{\vert \widetilde{N}_n(y)-\widetilde{N}_n(z)\vert}{|y-z|^{\frac{\alpha-1}{2}}}
\le  n^{(1- \frac 1\alpha +\gamma)/2}\right\}. $$

\begin{lem}
\label{omegasec5}
For all $\gamma > 0$,  ${\mathbb P}(\widetilde{\Omega}_n)=1-o(n^{-1-\frac {1}{\alpha\beta}})$.
\end{lem}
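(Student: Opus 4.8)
The plan is to bound the probability of the complement $\widetilde{\Omega}_n^c \subseteq \A_n^c \cup \B_n^c \cup \C_n^c$ by estimating each of the three events separately, showing each has probability $o(n^{-1-\frac{1}{\alpha\beta}})$.

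First I would handle $\B_n^c$. The variable $\sum_{k=1}^n \varepsilon_k$ is a sum of i.i.d.\ Bernoulli$(p)$ random variables, so a standard Chernoff/Hoeffding bound gives ${\mathbb P}(\B_n^c) = {\mathbb P}\left(\sum_{k=1}^n \varepsilon_k < \frac{np}{2}\right) = O(e^{-cn})$ for some $c>0$, which is far smaller than $n^{-1-\frac{1}{\alpha\beta}}$. This step is routine and poses no difficulty.

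Next I would treat $\A_n^c$. Here the key point is that $S=\sum_{k=1}^n X_k(1-\varepsilon_k)$ has increments $Y_k = X_k(1-\varepsilon_k)$ which are still i.i.d.; their common law is a mixture of $\mu_X$ (weight $1-p$) and $\delta_0$ (weight $p$), which still lies in the normal domain of attraction of a stable law of index $\alpha$ (up to the deterministic time change by the factor $1-p$ absorbed in $\widetilde U = (1-p)^{1/\alpha}U$). So the estimates from the appendix and from the proof of Lemma~\ref{lem:omega_n} apply verbatim to this walk: the range concentration bound ${\mathbb P}(R_n \le \EE[R_n]n^\gamma) = 1 - O(e^{-Cn^\gamma})$ together with $\EE[R_n] = O(n^{1/\alpha})$ gives ${\mathbb P}(R_n > n^{1/\alpha+\gamma}) = o(n^{-1-\frac{1}{\alpha\beta}})$, and the subadditivity argument for $N_n^*$ combined with the moment bounds from \cite{Jain-Pruitt} gives ${\mathbb P}(\sup_y N_n(y) > n^{1-1/\alpha+\gamma}) = o(n^{-1-\frac{1}{\alpha\beta}})$ (indeed these are $o(n^{-K})$ for any $K$ once $\gamma$ is fixed, by choosing the exponent $m$ large in the Markov inequality). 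So $\A_n$ is essentially the old $\Omega_n$ applied to the new walk, and I would just cite those arguments.

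Finally, for $\C_n^c$, the obstacle — and the one genuinely new ingredient — is that $\widetilde N_n(y) = \#\{k : S_{k-1}=y,\ \varepsilon_k=1\}$ is a \emph{thinned} local time, not the full local time $N_n(y)$. The plan is to follow the same route as in \eqref{eq1}--\eqref{eq3}: restrict to $A_n = \{\sup_{k\le n}|S_k| < n^{1/\alpha+\delta'}\}$ (whose complement is negligible by Doob's inequality applied to the new walk), so that only $O(n^{2/\alpha + 2\delta'})$ pairs $(y,z)$ matter, and then use a Markov inequality of order $2m$ on $\widetilde N_n(y) - \widetilde N_n(z)$. The point to check is that the moment bound $\sup_{y\ne z}\EE[|\widetilde N_n(y)-\widetilde N_n(z)|^{2m}]/|y-z|^{(\alpha-1)m} = O(n^{(1-1/\alpha)m})$ still holds for the thinned version; since $0\le \widetilde N_n(y) \le N_n(y)$ pointwise, one might hope to dominate directly, but the difference $\widetilde N_n(y)-\widetilde N_n(z)$ is not monotone, so the cleanest approach is to redo the Jain--Pruitt-type computation conditionally on $S$ (or on the increment sequence $Y$): conditionally on $S$, each visit to $y$ independently counts toward $\widetilde N_n(y)$ with probability $p$, which only helps, and then average; alternatively just observe $|\widetilde N_n(y) - \widetilde N_n(z)| \le |N_n(y) - N_n(z)| + (\text{fluctuation of the thinning})$ and bound each term. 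Once the moment bound is in hand, choosing $m$ large enough makes ${\mathbb P}(\C_n^c) = o(n^{-1-\frac{1}{\alpha\beta}})$, and summing the three estimates completes the proof.
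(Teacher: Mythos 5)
Your overall decomposition into $\mathcal{A}_n^c \cup \mathcal{B}_n^c \cup \mathcal{C}_n^c$ is exactly the paper's, and your treatment of $\mathcal{B}_n$ (Chernoff) and of $\mathcal{A}_n$ (range concentration from the appendix plus a high-order Markov bound on $N_n^*$ via \eqref{contsup}) matches the paper in substance. You also correctly identify that $\mathcal{C}_n$ is the genuinely new point, that $\widetilde N_n$ is a thinned local time, and that a pointwise domination $\widetilde N_n \le N_n$ does not help for the difference. So far so good.

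The gap is in how you actually bound $\EE[|\widetilde N_n(y)-\widetilde N_n(z)|^{2m}]$. Your first route — condition on $S$ (or on $Y$) and claim that "each visit to $y$ independently counts toward $\widetilde N_n(y)$ with probability $p$" — is not correct. Since $Y_k=X_k(1-\varepsilon_k)$, the event $\{\varepsilon_k=1\}$ is \emph{not} independent of $Y_k$: it forces $Y_k=0$. Conditionally on the path, only the visits $k$ with $Y_k=0$ (i.e.\ $S_k=S_{k-1}$) can be counted at all, and for those the conditional success probability is $p/\bigl(p+(1-p)\mu_X(0)\bigr)$, not $p$. So the base process to be thinned is $V_n(y)=\#\{k\le n: S_{k-1}=S_k=y\}$, not $N_n(y)$, and with a different parameter; you would then still need an analogue of the Jain--Pruitt moment bound for $V_n$, which does not come for free. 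Your second route — split $|\widetilde N_n(y)-\widetilde N_n(z)|$ into a deterministic thinning part plus a "fluctuation" — is the right shape, but you leave the fluctuation bound unexplained, and this is where the whole content of the step lies. The ingredient you are missing is the martingale decomposition used in the paper's Lemma \ref{Francoise}: write $\widetilde N_n(x)=pN_n(x)+H_n(x)$ with $H_n(x)=\sum_{k=1}^n\mathbf{1}_{\{S_{k-1}=x\}}(\mathbf{1}_{\{\varepsilon_k=1\}}-p)$, observe that $H_n(x)$ (and hence $H_n(x)-H_n(y)$) is a martingale in the filtration $\F_n=\sigma(X_k,\varepsilon_k:k\le n)$ because $\mathbf{1}_{\{S_{k-1}=x\}}$ is $\F_{k-1}$-measurable while $\varepsilon_k$ is independent of $\F_{k-1}$ with $\EE[\varepsilon_k]=p$, note that the predictable quadratic variation of $H_\cdot(x)-H_\cdot(y)$ is $\var(\varepsilon)(N_n(x)+N_n(y))$ with uniformly bounded increments, and apply Burkholder's inequality together with the moment bound $\EE[N_n(x)^\nu]=\O(n^{\nu(1-1/\alpha)})$. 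Combined with the Jain--Pruitt bound applied to the $pN_n$ term (exactly as in \eqref{eq3}), this yields the required estimate $\EE[|\widetilde N_n(x)-\widetilde N_n(y)|^{2\nu}]\le C_\nu |x-y|^{\nu(\alpha-1)}n^{\nu(1-1/\alpha)}$, and then your $\mathcal{C}_n^c$ argument closes exactly as in \eqref{eq1}--\eqref{eq3}. Without this martingale step, the proof of the $\mathcal{C}_n$ estimate is not complete.
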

\begin{proof}
According to the proof of Lemma \ref{lem:omega_n},  ${\mathbb P}(R_n\le n^{\frac{1}{\alpha}+\gamma})=1-o(n^{-1
-\frac 1{\alpha\beta}})$. Moreover, according to the proof of Lemma \ref{lem:borne} (see \eqref{contsup}), 
we have for all $\nu \geq 1$, 
\begin{eqnarray}
\label{Nnynu}
{\mathbb E}\left[\sup_y N_n^\nu(y)\right]= \O \left(n^{ \nu(1-\frac 1 \alpha)}\right).
\end{eqnarray}
Hence by the use of the Markov inequality, we get  
$${\mathbb P}\left(\sup_{y\in\mathbb Z} N_n(y)\ge 
n^{1-\frac{1}{\alpha}+\gamma}\right)=o(n^{-1
-\frac 1{\alpha\beta}}).$$
It follows that ${\mathbb P}(\A_n)=1-o(n^{-1
-\frac 1{\alpha\beta}})$.

\noindent Next it is well known that ${\mathbb P}(\B_n)=1-o(n^{-1-\frac 1{\alpha\beta}})$.

\noindent Finally, as in the proof of Lemma \ref{lem:omega_n}, the estimate of ${\mathbb P}(\C_n)$ comes from the following lemma:

\begin{lem}\label{Francoise}
For any integer $\nu\ge 1$, there exists a constant $C_\nu>0$ such that, for every
$n\ge 1$ and every $x,y\in{\mathbb Z}$
$$
{\mathbb E}\left[(\widetilde N_n(x)-\widetilde N_n(y))^{2\nu} \right]
  \le  C_\nu \vert x-y\vert^{\nu(\alpha-1)} n^{\nu(1-\frac 1 \alpha)} \, .
$$
\end{lem}
\begin{proof} 
Let $x$ and $y$ be fixed, with $x\neq y$ (otherwise, there is nothing
to prove). We have
\begin{eqnarray}
\label{NtildeN}
 \widetilde N_n(x) =p N_{n}(x) + \sum_{k=1}^n {\bf 1}_{\{S_{k-1}=x\}} \bar{\epsilon}_k \, ,
\end{eqnarray}
where  $\bar{\epsilon}_k= {\bf 1}_{\{\epsilon_k=1\}} -p$. 
Set $H_n(x) := \sum_{k=1}^n {\bf 1}_{\{S_{k-1}=x\}} \, \bar{\epsilon}_k$. For all 
$x \in \ZZ$, $(H_n(x), n \geq 1)$ is a martingale with respect to the 
filtration $\F_n = \sigma(X_k,\epsilon_k, k \leq n)$. Hence, $(H_n(x) -
H_n(y),n\ge 1)$ is a martingale as well. 
According to the B\"urkholder's inequality
(see \cite{HH} Theorem 2.11. p.23), for all integer $\nu \geq 1$, there exists a constant $C=C(\nu)$ 
such that for all $n \ge 1$,
\[
\EE \cro{(H_n(x)-H_n(y))^{2\nu}}^{\frac{1}{2\nu}} 
\leq C \acc{ 
\EE \cro{\pare{\sum_{k=1}^n \EE(d_k^2(x,y)\mid\F_{k-1})}^\nu}^{\frac{1}{2\nu}}
+ \EE\cro{\sup_{k=1,\dots,n} |d_k(x,y)|^{2 \nu}}^{\frac{1}{2\nu}}},
\]
where $d_k(x,y)$ is the martingale increment
\[ d_k(x,y) := H_k(x)-H_{k-1}(x) - H_k(y) + H_{k-1}(y)
	= \pare{{\bf 1}_{\{S_{k-1}=x\}} - {\bf 1}_{\{S_{k-1}=y\}}} \bar{\varepsilon}_k
\, .
\]
Note that for all $k \geq 1$, and all $x, y \in \ZZ$, $|d_k(x,y)| \leq 1$,
and that 
\[ \sum_{k=1}^n \EE(d_k^2(x,y)\mid \F_{k-1}) = \var(\varepsilon) 
\sum_{k=1}^n ({\bf 1}_{\{S_{k-1}=x\}} - {\bf 1}_{\{S_{k-1} =y\}})^2 
= \var(\varepsilon)  (N_{n}(y) + N_{n}(x)) \, .
\] 
Therefore,
\begin{eqnarray*} 
\EE \cro{(H_n(x)-H_n(y))^{2\nu}}^{\frac{1}{2\nu}} 
& \leq  & C \acc{ 1+ \EE\cro{N_{n}^{\nu}(y)}^{\frac{1}{2\nu}}
		 + \EE \cro{N_{n}^{\nu}(x)}^{\frac{1}{2\nu}}}\\
& \leq  & C (1 + 2n^{(1-1/\alpha)/2}) \quad \hbox{(by using \eqref{Nnynu})}\\
&\leq & 3 C n^{(1-1/\alpha)/2} |x-y|^{(\alpha-1)/2} 
\, ,
\end{eqnarray*} 
since $|x-y| \geq 1$, and $n \geq 1$. 
Hence, according to \cite{Jain-Pruitt} (see Equation \refeq{eq3}), 
\begin{eqnarray*} 
\EE\left\{(\widetilde N_n(x)-\widetilde N_n(y))^{2\nu}\right\}^{\frac 1 {2\nu}}
& \leq & p \EE\left\{(N_n(x)- N_n(y))^{2\nu}\right\}^{\frac 1 {2\nu}}
+ \EE\left\{(H_n(x)-H_n(y))^{2\nu}\right\}^{\frac 1 {2\nu}}
\\
& \leq & C_\nu n^{(1-1/\alpha)/2} |x-y|^{(\alpha-1)/2} \, ,
\end{eqnarray*} 
for some constant $C_\nu>0$. This proves Lemma \ref{Francoise}.  
\end{proof}
\noindent This concludes also the proof of Lemma \ref{omegasec5}. 
\end{proof}
\subsection{Expression of the return probability by an integral}
According to the result of the previous subsection, we are 
led to the study of ${\mathbb P}(\widetilde Z_n=0,S_n=0,\widetilde{\Omega}_n)$.
As in Lemma \ref{formule1}, we have~:
$$ \PP(M_n=(0,0),\widetilde{\Omega}_n)=\PP(\widetilde{Z}_n=0,S_n=0,\widetilde{\Omega}_n)
   =\frac 1{2\pi} \int_{-\pi}^{ \pi} {\mathbb E}
  \left[\prod_{y\in\mathbb Z}\varphi_\xi(t\widetilde N_n(y))
   {\bf 1}_{\{S_n=0\}}{\bf 1}_{\widetilde \Omega_n} \right]\, dt\, .$$
By following the proof of Lemma \ref{formule1} (note that a priori $\sum_y\widetilde N_n(y)$ is not equal to $n$ here), we get
\begin{eqnarray}
\label{intZtilde}
 {\mathbb P}(\widetilde Z_n=0,S_n=0,\widetilde \Omega_n)
   =\frac {d}{2\pi} \int_{-\pi/d }^{\pi/d} {\mathbb E}
  \left[\prod_{y\in\mathbb Z}\varphi_\xi(t\widetilde N_n(y))
  {\bf 1}_{\{\sum_y\widetilde N_n(y)\in d_0{\mathbb Z}\}} 
  {\bf 1}_{\{S_n=0\}}{\bf 1}_{\widetilde \Omega_n} \right]\, dt.
\end{eqnarray}

\noindent 
In the sequel we consider $\eta$, $\gamma$ and $\varepsilon$ satisfying all
the hypotheses of Section \ref{sec:scheme} and $\gamma< (\alpha-1)/(4\alpha)$.
\subsection{Estimate of the integral away from the origin}
The following is very similar to the case of RWRS.
\begin{lem} 
\label{resteint} 
We have 
$$\int_{n^{-\delta+\eta}}^{\pi/d}{\mathbb E}\left[
     \prod_{y\in\mathbb Z}\vert \varphi_\xi(t\widetilde N_n(y))\vert{\bf 1}_{\widetilde \Omega_n}\right]
    \, dt=o(n^{-1-\frac 1{\alpha\beta}}).$$
\end{lem}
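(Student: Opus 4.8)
The plan is to cut $[n^{-\delta+\eta},\pi/d]$ into the three ranges $[n^{-\delta+\eta},\varepsilon_0n^{-1+\frac1\alpha-\eta}]$, $[\varepsilon_0n^{-1+\frac1\alpha-\eta},n^{-1+\frac1\alpha+\varepsilon}]$ and $[n^{-1+\frac1\alpha+\varepsilon},\pi/d]$, and to show that each contributes $o(e^{-n^c})$ for some $c>0$, which is in particular $o(n^{-1-\frac1{\alpha\beta}})$. These are the exact analogues of Propositions \ref{sec:step1}, \ref{sec:step2} and \ref{sec:step3}, with $N_n,N_n^*,V_n,\Omega_n$ replaced by $\widetilde N_n,\widetilde N_n^*,\widetilde V_n,\widetilde\Omega_n$, so the first step is to record on $\widetilde\Omega_n=\A_n\cap\B_n\cap\C_n$ the substitutes for \eqref{minVn} and \eqref{contsup}. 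Since $\widetilde N_n(y)\le N_n(y)$ for every $y$ and $\{y:\widetilde N_n(y)>0\}\subseteq\{y:N_n(y)>0\}$, the event $\A_n$ gives $\widetilde N_n^*:=\sup_y\widetilde N_n(y)\le n^{1-\frac1\alpha+\gamma}$ and $\#\{y:\widetilde N_n(y)>0\}\le R_n\le n^{\frac1\alpha+\gamma}$. On $\B_n$ one has $\sum_y\widetilde N_n(y)=\sum_{k=1}^n{\bf 1}_{\{\varepsilon_k=1\}}\ge np/2$, so, since $\beta>1$ throughout Section 5, H\"older's inequality yields $\widetilde N_n^*\ge c\,n^{1-\frac1\alpha-\gamma}$ and $\widetilde V_n:=\sum_y\widetilde N_n^\beta(y)\ge(np/2)^\beta R_n^{-(\beta-1)}\ge c\,n^{\delta\beta-(\beta-1)\gamma}$. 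Finally $\C_n$ provides the H\"older bound $|\widetilde N_n(y)-\widetilde N_n(z)|\le n^{(1-\frac1\alpha+\gamma)/2}|y-z|^{\frac{\alpha-1}2}$.

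\textbf{The first two ranges.} For $|t|\le\varepsilon_0n^{-1+\frac1\alpha-\eta}$ and $n$ large one has $t\widetilde N_n(y)\le\varepsilon_0n^{\gamma-\eta}\le\varepsilon_0$ (recall $\gamma\le\eta$), so by \eqref{majorationphi} and the lower bound on $\widetilde V_n$, $\prod_y|\varphi_\xi(t\widetilde N_n(y))|\le e^{-\sigma t^\beta\widetilde V_n}\le e^{-c\,n^{\eta\beta(3-\beta)/2}}$ on $\widetilde\Omega_n$; integrating over a set of length $O(n^{-1+\frac1\alpha})$ settles this range. On $[\varepsilon_0n^{-1+\frac1\alpha-\eta},n^{-1+\frac1\alpha+\varepsilon}]$ I would restrict the product, as in \eqref{majorationphin}, to the sites with $\widetilde N_n(z)\le\varepsilon_0n^{1-\frac1\alpha-\varepsilon}$, and prove the analogue of Lemma \ref{sec:lem2b}: on $\widetilde\Omega_n$ at least $(\varepsilon_0/10)^{\frac2{\alpha-1}}n^{\frac1\alpha-\frac{2\varepsilon+\gamma}{\alpha-1}}$ sites $z$ satisfy $\tfrac{\varepsilon_0}{10}n^{1-\frac1\alpha-\varepsilon}\le\widetilde N_n(z)\le\varepsilon_0n^{1-\frac1\alpha-\varepsilon}$. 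Its proof is word for word that of Lemma \ref{sec:lem2b}, the only change being that the site $y_1$ realizing $\widetilde N_n^*$ satisfies $\widetilde N_n(y_1)\ge c\,n^{1-\frac1\alpha-\gamma}$ by the bound just obtained (instead of from $n=\sum_yN_n(y)$), which exceeds $\tfrac{\varepsilon_0}2n^{1-\frac1\alpha-\varepsilon}$ for $n$ large since $\gamma<\varepsilon$; one then concludes as in the proof of Proposition \ref{sec:step2}. Only the behaviour of $\varphi_\xi$ near $0$ enters in these two ranges, so nothing beyond \eqref{majorationphi} is needed.

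\textbf{The third range.} Here I would reproduce the surgery of Proposition \ref{sec:step3}. With $\C^\pm$, $T=M+N$ and $q:=\PP((X_1,\dots,X_T)=\C^+)>0$, set $\D_n:=\{C_n\ge c_1n\}$ where $C_n$ counts the blocks $k<\lfloor n/T\rfloor$ with $\varepsilon_{kT+1}=\dots=\varepsilon_{(k+1)T}=0$ and $(X_{kT+1},\dots,X_{(k+1)T})=\C^+$ — the requirement $\varepsilon\equiv0$ being what makes $S$ perform a genuine up--down excursion on such a block. Chernoff gives $\PP(\D_n)=1-o(e^{-cn})$, and, exactly as in Lemma \ref{sec:p_n}, on $\widetilde\Omega_n\cap\D_n$ there are $\gtrsim n^{\frac1\alpha-\alpha\gamma}$ sites $Y_i$, pairwise at distance $\ge 3NM$, each carrying $\gtrsim n^{1-\frac1\alpha-2\gamma}$ such excursions. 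The genuinely new point is the source of conditional spread for $\widetilde N_n$: a plain excursion consists only of vertical steps and contributes nothing to $\widetilde N_n$, so one must slightly enlarge each excursion by inserting, at the step where the walk sits at the apex $Y_i+MN$, one extra slot carrying a free Bernoulli mark, arranged (by letting the descent start one step later when the mark equals $1$) so that the enlarged excursion has a fixed length, returns to $Y_i$, and has the same footprint outside the block for both values of the mark, while adding ${\bf 1}_{\{\mathrm{mark}=1\}}$ to $\widetilde N_n(Y_i+MN)$. This yields, as in Lemma \ref{independance}, a decomposition $\widetilde N_n(Y_i+MN)=\widetilde N_n^0(Y_i+MN)+b_i$ in which, conditionally on the relevant $\sigma$-field, $b_i$ is a sum of $\gtrsim n^{1-\frac1\alpha-2\gamma}$ independent Bernoulli variables with parameter bounded away from $0$ and $1$, independent of $\widetilde N_n^0(Y_i+MN)$. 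From here the analogues of Lemmas \ref{sec:liminf} and \ref{sec:lem0} carry over with the same ``good/bad interval'' set $\I$ and the same Berry--Esseen estimate (only the parameter $1/2$ being changed), giving $\PP(\widetilde N_n(Y_i+MN)\in\I\mid\cdots)\ge1/3$; a binomial concentration argument then shows that, off an event of probability $o(e^{-c\,n^{\frac1\alpha-\alpha\gamma}})$, at least $\tfrac13n^{\frac1\alpha-\alpha\gamma}$ of the $Y_i+MN$ satisfy $\widetilde N_n(Y_i+MN)\in\I$, whence $\prod_y|\varphi_\xi(t\widetilde N_n(y))|\le e^{-c\,n^{c'}}$ for some $c'>0$ and the third range contributes $o(e^{-n^c})$ as well.

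\textbf{Main obstacle.} The delicate part is the third range, and within it the verification that the enlarged-excursion construction really produces the conditional-independence statement (the analogue of Lemma \ref{independance}) once both the excursion patterns and the Bernoulli marks $\varepsilon_k$ are in play: one must pin down the $\sigma$-field to condition on, check that the family of free marks contributing to $\widetilde N_n(Y_i+MN)$ has size $\gtrsim n^{1-\frac1\alpha-2\gamma}$ and is independent of $\widetilde N_n^0(Y_i+MN)$, and keep precise track of which steps the conditioning freezes. Everything else is a routine transcription of Section 2.
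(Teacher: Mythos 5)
Your overall plan (split $[n^{-\delta+\eta},\pi/d]$ into the same three ranges and transcribe Propositions~\ref{sec:step1}--\ref{sec:step3}) is the paper's plan too, and your treatment of the first two ranges is correct. In the third range you correctly identify the genuine obstacle -- a pure vertical excursion contributes nothing to $\widetilde N_n$, so the surgery must somehow plant a horizontal step at the apex -- but your proposed fix does not quite close.

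The paper's fix is ``flat peaks'': an upper flat peak is the deterministic pattern
$(Y_{H+1},\dots,Y_{H+M},\varepsilon_{H+M+1},Y_{H+M+2},\dots,Y_{H+M+N+1})=(N,\dots,N,1,-M,\dots,-M)$
of fixed length $T+1$, i.e.\ $M$ vertical up-steps, one horizontal step at the apex, then $N$ vertical down-steps; the lower flat peak is the symmetric $(-M,\dots,-M,1,N,\dots,N)$. Both return to $Y_i$, both have the same probability, both have the horizontal step as a deterministic part of the pattern, and \emph{only} the upper one passes through $Y_i+MN$, where it does so with $\varepsilon=1$ and hence adds $1$ to $\widetilde N_n(Y_i+MN)$. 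The Bernoulli$(1/2)$ source of spread is exactly ``upper vs.\ lower'', precisely as $\C^+$ vs.\ $\C^-$ in Section~2, so Lemmas~\ref{independance}, \ref{sec:liminf}, \ref{sec:lem0} carry over verbatim.

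Your construction instead uses only the $\C^+$ pattern and makes the $\varepsilon$-mark at the apex the Bernoulli source. This is under-specified in a way that matters. You condition on $\varepsilon\equiv 0$ over the block and insert ``one extra slot carrying a free Bernoulli mark''; but when that mark is $0$, the apex slot is a vertical step whose increment is a fresh sample of $\mu_X$, so the walk leaves the apex and (generically) does \emph{not} return to $Y_i$ at the end of the block -- the footprint outside the block is then different, the surgery is no longer local, and the conditional independence across the $Y_i$'s (the analogue of Lemma~\ref{independance}) fails. Your parenthetical ``letting the descent start one step later when the mark equals $1$'' would make the block length depend on the mark, contradicting ``fixed length''; and making the mark-$0$ branch end at $Y_i$ requires a vertical move of size $0$, i.e.\ $\mu_X(0)>0$, which is not assumed. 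So either the locality or the fixed length breaks. The clean way out is exactly what the paper does: put the horizontal step deterministically at the apex of \emph{both} variants and let the randomness be which variant (up-down or down-up) occurred; this keeps the block length fixed, keeps the return to $Y_i$ automatic, and keeps the Bernoulli parameter at $1/2$, so no downstream constant needs changing.

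Two minor notes: the inequality $\widetilde N_n^*\ge cn^{1-1/\alpha-\gamma}$ on $\A_n\cap\B_n$ is a pigeonhole bound (average over at most $R_n$ occupied sites), not H\"older; and your bound $\widetilde V_n\ge c\,n^{\delta\beta-(\beta-1)\gamma}$ is in fact slightly sharper than the $\widetilde V_n\ge c\,n^{\delta\beta-\gamma}$ the paper uses, so that part is fine.
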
 
\begin{proof} First set 
$$\widetilde V_n:=\sum_{y\in\mathbb Z}\widetilde N_n(y)^\beta.$$
Since on $\widetilde \Omega_n$, $\sum_y  \widetilde N_n(y) = \sum_{k=1}^n \varepsilon_k \ge np/2$
and $\widetilde N_n(y) \le N_n(y) \le n^{1-\frac 1\alpha + \gamma}$, by following the proof of Lemma \ref{lem:omega_n}, we get on $\widetilde \Omega_n$: 
$$\widetilde V_n\ge c n^{\delta\beta-\gamma},$$
for some constant $c>0$. Let now $\varepsilon$ be as in Proposition \ref{sec:step2}. Then the proofs of Proposition \ref{sec:step1} and \ref{sec:step2} lead to 
$$\int_{n^{-\delta+\eta}}^{n^{-1+\frac 1{\alpha}+\varepsilon}}{\mathbb E}\left[
     \prod_{y\in\mathbb Z}\vert \varphi_\xi(t\widetilde N_n(y))\vert{\bf 1}_{\widetilde \Omega_n}\right]
    \, dt=o(n^{-1-\frac 1{\alpha\beta}}).$$
But we can also easily adapt the proof of Proposition \ref{sec:step3} to obtain~:
$$\int_{n^{-1+\frac 1{\alpha}+\varepsilon}}^{\pi/d }{\mathbb E}\left[
     \prod_{y\in\mathbb Z}\vert \varphi_\xi(t\widetilde N_n(y))\vert{\bf 1}_{\widetilde \Omega_n}\right]
    \, dt=o(n^{-1-\frac 1{\alpha\beta}}).$$
Indeed we just need to use "flat peaks" instead of peaks. These "flat peaks" are defined as follows. Let $M$ and $N$ be such that ${\mathbb P}(Y_1=N)>0$
and ${\mathbb P}(Y_1=-M)>0$.
Then an "upper flat peak" is a sequence of the type 
$$(Y_{H+1},\dots,
Y_{H+M},\varepsilon_{H+M+1}, Y_{H+M+2},\dots,Y_{H+M+N+1})=(N,\dots,N,1,-M,\dots,-M),$$ 
where $H$ is any multiple of $M+N+1$, and one can define analogously a "lower flat peak". 
We leave to the reader to check that we can then follow the proof of Proposition \ref{sec:step3} simply by replacing everywhere peaks by flat peaks. This concludes the proof of Lemma \ref{resteint}.  
\end{proof} 

\subsection{Estimate of the integral near the origin}
We turn now to the estimate of the integral in \eqref{intZtilde} on the interval $[-n^{-\delta+\eta},n^{-\delta+\eta}]$. For this we will roughly follow the same lines 
as for the proof of Proposition \ref{lem:equivalent}. However the technical details are more involved here, since we have to make all calculus conditionally to $\{S_n=0\}$. The first step is the following lemma: 

\begin{lem}
\label{lem:sec:normeVn} 
We have
\begin{equation}\label{sec:normeVn}
 \sup_n\ {\mathbb E} \left[ \left. \left( \frac{n^{\delta\beta}}
{\widetilde V_n} \right)^{\frac 1 {\beta-1} }
{\bf 1}_{\widetilde \Omega_n}\ \right\vert \ {S_n =0} \right]<+\infty.
\end{equation}
\end{lem}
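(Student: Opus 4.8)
The plan is to adapt the proof of Lemma~\ref{lem:borne} in the case $\beta>1$ to the present bridge setting, the only genuinely new ingredient being that all estimates must be carried out conditionally on $\{S_n=0\}$. As there, the argument splits into a deterministic inequality bounding $n^{\delta\beta}/\widetilde V_n$ by a power of the range $R_n$ of $S$, followed by a control of $R_n$ under the conditional law $\PP(\,\cdot\mid S_n=0)$.

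First I would establish the pointwise bound. On $\widetilde\Omega_n$ we have, by the very definition of $\B_n$, that $\sum_y\widetilde N_n(y)=\sum_{k=1}^n\varepsilon_k\ge np/2$, while $\widetilde N_n$ is supported on the range of $S$, so that $\#\{y:\widetilde N_n(y)>0\}\le R_n$. H\"older's inequality with exponents $\beta$ and $\beta/(\beta-1)$ then gives, on $\widetilde\Omega_n$,
\[
\frac{np}{2}\ \le\ \Big(\sum_y\widetilde N_n(y)^\beta\Big)^{1/\beta}R_n^{(\beta-1)/\beta}\ =\ \widetilde V_n^{1/\beta}R_n^{(\beta-1)/\beta}\, .
\]
Since $\delta\beta-\beta=\beta(\delta-1)=-(\beta-1)/\alpha$, isolating $\widetilde V_n$ and raising to the power $1/(\beta-1)$ yields, on $\widetilde\Omega_n$,
\[
\Big(\frac{n^{\delta\beta}}{\widetilde V_n}\Big)^{\frac{1}{\beta-1}}\ \le\ \Big(\frac{2}{p}\Big)^{\frac{\beta}{\beta-1}}\frac{R_n}{n^{1/\alpha}}\, ,
\]
and therefore
\[
\EE\Big[\Big(\tfrac{n^{\delta\beta}}{\widetilde V_n}\Big)^{\frac{1}{\beta-1}}{\bf 1}_{\widetilde\Omega_n}\ \Big|\ S_n=0\Big]\ \le\ \Big(\tfrac{2}{p}\Big)^{\frac{\beta}{\beta-1}}\frac{\EE[R_n\mid S_n=0]}{n^{1/\alpha}}\, .
\]
Thus \eqref{sec:normeVn} follows once we show that $\EE[R_n\mid S_n=0]=\O(n^{1/\alpha})$.

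This conditional range estimate is the main obstacle, and the only place where the bridge structure really enters. I would write $\EE[R_n\mid S_n=0]=\EE[R_n{\bf 1}_{\{S_n=0\}}]/\PP(S_n=0)$. Since $\PP(Y_1=0)=p+(1-p)\mu_X(0)>0$, the walk $S$ is aperiodic, so the local limit theorem for walks in the normal domain of attraction of $\S_\alpha$ gives $\PP(S_n=0)\ge c\,n^{-1/\alpha}$ for $n$ large. It remains to check that $\EE[R_n{\bf 1}_{\{S_n=0\}}]=\O(1)$. For this I would use the subadditivity of the range: with $m:=\lceil n/2\rceil$ one has $R_n\le R_m+R_{n-m}\circ\theta_m$, and the two contributions are bounded by conditioning on $\F_m:=\sigma(Y_1,\dots,Y_m)$. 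Writing $g_j(z):=\PP(S_j=z)$ and using that $S_n-S_m$ is independent of $\F_m$,
\[
\EE[R_m{\bf 1}_{\{S_n=0\}}]=\EE\big[R_m\,g_{n-m}(-S_m)\big]\ \le\ \Big(\sup_z g_{n-m}(z)\Big)\EE[R_m]\, ,
\]
while, $R_{n-m}\circ\theta_m$ being a function of $Y_{m+1},\dots,Y_n$ alone,
\[
\EE[(R_{n-m}\circ\theta_m){\bf 1}_{\{S_n=0\}}]=\sum_z g_m(z)\,\EE[R_{n-m}{\bf 1}_{\{S_{n-m}=-z\}}]\ \le\ \Big(\sup_z g_m(z)\Big)\EE[R_{n-m}]\, .
\]
The uniform local bound $\sup_z g_j(z)=\O(j^{-1/\alpha})$ (again from aperiodicity and the local limit theorem) together with $\EE[R_j]=\O(j^{1/\alpha})$ (see \cite{LGR,spitzer}, as already used above) and $m,\ n-m\asymp n$ then give $\EE[R_n{\bf 1}_{\{S_n=0\}}]=\O(1)$, as needed.

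The hard part is thus not the algebra but this last estimate: conditioning the recurrent walk $S$ to return to the origin is a singular conditioning, of probability $\asymp n^{-1/\alpha}$, which cannot simply be dropped, since on $\widetilde\Omega_n$ the quantity $(n^{\delta\beta}/\widetilde V_n)^{1/(\beta-1)}$ is only controlled by $R_n/n^{1/\alpha}$, which is not uniformly bounded (it is of order $n^\gamma$ on $\widetilde\Omega_n$). The subadditivity argument above handles it; alternatively one may invoke a conditional version of the range-concentration estimate proved in the appendix. Everything else — the H\"older step and the bookkeeping of exponents — is routine.
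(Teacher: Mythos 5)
Your proof is correct and follows essentially the same route as the paper's. Both arguments reduce, via H\"older and the lower bound $\sum_y\widetilde N_n(y)\geq np/2$ on $\widetilde\Omega_n$, to the pointwise bound $\bigl(n^{\delta\beta}/\widetilde V_n\bigr)^{1/(\beta-1)}\leq C\,R_n/n^{1/\alpha}$, and both then show $\EE[R_n{\bf 1}_{\{S_n=0\}}]=\O(1)$ by splitting $R_n$ at $\lceil n/2\rceil$ and using the Markov property together with $\sup_z\PP(S_j=z)=\O(j^{-1/\alpha})$ and $\EE[R_j]=\O(j^{1/\alpha})$, before dividing by $\PP(S_n=0)\asymp n^{-1/\alpha}$; the paper phrases the second piece of the range via the time-reversed walk $(S_n-S_{n-k})$ while you use $R_{n-m}\circ\theta_m$, but these are equivalent.
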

\begin{proof}
Remind that on $\widetilde \Omega_n$, $np/2\le \sum_y\widetilde N_n(y)\le
  \widetilde V_n^{1/\beta} R_n^{1- 1/\beta}$. Observe on the other hand that $\delta\beta/(\beta-1)=\beta/(\beta-1)- 1/\alpha$. Thus there is a constant $C>0$ such that for all $n\ge 1$, on $\widetilde \Omega_n$, 
$$\left(\frac{n^{\delta\beta}}{\widetilde V_n}\right)^{\frac 1 {\beta-1}} \le C \frac{R_n}{n^{\frac 1 \alpha}}.$$ 
It follows from the above inequality that 
\[ 
 \EE\left[ \pare{\frac{n^{\delta\beta}}{\widetilde V_n}}^{\frac{1}{\beta - 1}} {\bf 1}_{\widetilde \Omega_n} {\bf 1}_{\{S_n=0\}}\right]
 \le  C
 \EE\left[ \frac{R_n}{n^{\frac 1 \alpha}} {\bf 1}_{\{S_n=0\}} \right].
\]
Set $m:=\lfloor n/2\rfloor$ and $m':=\lceil n/2\rceil$. By using that $R_n \leq R_{m'} +  \#\acc{ S_{m'+1}, \cdots, S_n} = R_{m'} +  \#\acc{ S_{m'+1}-S_n, \cdots, S_{n-1}-S_n, 0} $ and Markov property (respectively on the sequences $(S_k,k\geq 0)$ and $(S_n-S_{n-k},0\le k\leq n)$), we get 
\begin{eqnarray*} 
\EE\left[ \pare{\frac{n^{\delta\beta}}{\widetilde V_n}}^{\frac{1}{\beta - 1}} {\bf 1}_{\widetilde \Omega_n} {\bf 1}_{\{S_n=0\}}\right]
& \le  &  C \EE\left[\frac{ R_{m'}}{n^{\frac 1 \alpha}} \right] \times \sup_x \PP_x(S_m=0)
\\
&= &  \O\left( n^{-\frac 1 \alpha}\right)\,  ,
\end{eqnarray*}
since $\sup_x  \PP_x(S_m=0) = \O(n^{-1/\alpha})$ and   
$\EE(R_{m'}) = \O(n^{1/\alpha})$ (see \cite{LGR} Equation (7.a) p.703). We next divide all terms by $\PP(S_n=0)$ which is of order $n^{-1/\alpha}$ and this proves the lemma. 
\end{proof} 

\noindent We deduce the 

\begin{lem} \label{leminter} We have
$$\PP(\widetilde Z_n=0,S_n=0,\widetilde \Omega_n)=  n^{-1-\frac 1 {\alpha \beta}} d
 \, {\mathbb E}\left[  \frac{n^{\delta}}{\widetilde V_n^{\frac 1\beta}} 
    {\bf 1}_{ \{\sum_y\widetilde N_n(y)\in d_0{\mathbb Z}\} } 
   {\bf 1}_{\widetilde \Omega_n} \ \Big| \ S_n=0 \right]
      f_\alpha(0)f_\beta(0)  +o(n^{-1-\frac 1{\alpha\beta}}). $$
\end{lem}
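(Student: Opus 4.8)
The plan is to start from the exact integral expression \eqref{intZtilde}, restrict to the frequency band $|t|\le n^{-\delta+\eta}$ (the contribution of $|t|>n^{-\delta+\eta}$ is $o(n^{-1-1/(\alpha\beta)})$ by Lemma \ref{resteint}), and then run the three-step reduction of the proof of Proposition \ref{lem:equivalent}, but everywhere conditionally on $\{S_n=0\}$. First I would replace, inside the conditional expectation, the product $\prod_y\varphi_\xi(t\widetilde N_n(y))$ by $\exp\bigl(-|t|^\beta\widetilde V_n(A_1+iA_2\,\mathrm{sgn}(t))\bigr)$; the error is controlled exactly as in Lemma \ref{lem:gaussian}, using \eqref{phi0}, \eqref{majorationphi}, the bound $|t|\widetilde N_n(y)\le n^{\eta-1/(\alpha\beta)+\gamma}\to 0$ on $\widetilde\Omega_n$, and the lower bound $\widetilde V_n\ge c\,n^{\delta\beta-\gamma}$; the resulting error is $o\bigl(\mathbb E[\widetilde V_n^{-1/\beta}\mathbf 1_{\{S_n=0\}}]/\mathbb P(S_n=0)\bigr)\cdot\mathbb P(S_n=0)=o(n^{-1-1/(\alpha\beta)})$, the uniform-integrability input now being Lemma \ref{lem:sec:normeVn} (for $\beta>1$; for $\beta\le 1$ one uses the analogue, as in Lemma \ref{lem:borne}, which I would state in passing).

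Second I would carry out the explicit Gaussian-type computation of
\[
\frac{d}{2\pi}\int_{|t|\le n^{-\delta+\eta}} \mathbb E\Bigl[ e^{-|t|^\beta \widetilde V_n(A_1+iA_2\,\mathrm{sgn}(t))}\, \mathbf 1_{\{\sum_y \widetilde N_n(y)\in d_0\mathbb Z\}}\mathbf 1_{\widetilde\Omega_n}\ \Big|\ S_n=0\Bigr]\, dt,
\]
exactly as in Lemma \ref{le13}: the change of variable $v=t\widetilde V_n^{1/\beta}$ turns the $t$-integral into $2\pi \widetilde V_n^{-1/\beta} f_\beta(0)$ (here the phase $e^{-it\cdot 0}$ is trivial since $x=0$) minus a tail term $\int_{|v|\ge n^{\eta}\widetilde V_n^{1/\beta}/n^\delta}\cdots dv$, and the tail is $o(1)$ on $\widetilde\Omega_n$ by the lower bound $\widetilde V_n\ge c n^{\delta\beta-\gamma}$ together with $\gamma<\delta\beta\eta$ (guaranteed by the standing hypotheses on $(\eta,\gamma)$). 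This produces the factor $n^{\delta}\widetilde V_n^{-1/\beta}f_\beta(0)$ inside the expectation.

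The remaining task — and this is where I have been quietly passing over a point — is the appearance of $f_\alpha(0)$ and of the conditioning renormalisation. The identity \eqref{intZtilde} expresses $\mathbb P(\widetilde Z_n=0,S_n=0,\widetilde\Omega_n)$; writing it as $\mathbb P(S_n=0)$ times the conditional expectation in the statement, and using the local limit theorem for the random walk $S$, namely $\mathbb P(S_n=0)\sim n^{-1/\alpha}(1-p)^{-1/\alpha}f_\alpha(0)=n^{-1/\alpha}f_{\widetilde U_1}(0)$ where $\widetilde U=(1-p)^{1/\alpha}U$ — here I am using that the step distribution of $S=\sum Y_k$ with $Y_k=X_k(1-\varepsilon_k)$ lies in the normal domain of attraction of $\widetilde U_1$, and that $\sum_y\widetilde N_n(y)\in d_0\mathbb Z$ is compatible with $S_n=0$ under the hypothesis $d_0\mid d_1$ so the lattice constant works out — we get the prefactor $n^{-1/\alpha}f_\alpha(0)(1-p)^{-1/\alpha}$. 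Wait: the clean bookkeeping is to keep $\mathbb P(S_n=0)$ as is, absorb it into the definition of the conditional expectation, and only at the very end replace $\mathbb P(S_n=0)$ by its asymptotic $n^{-1/\alpha}f_\alpha(0)$ (with the convention, consistent with the rest of Section 5, that $f_\alpha$ already incorporates the $(1-p)^{1/\alpha}$ scaling, i.e. $f_\alpha$ is the density of $\widetilde U_1$); combined with the $d\cdot n^{-\delta}$ coming from Steps one and two, and $-\delta+1/\alpha-1=-1-1/(\alpha\beta)$... let me recompute: the $t$-integral over a band of width $n^{-\delta+\eta}$ gives, after the change of variables, a factor $n^{-\delta}$ out front (from $dt = n^{-\delta}\, d(n^\delta t)$ morally, or directly from $\widetilde V_n^{-1/\beta}\asymp n^{-\delta}$), so the whole conditional expectation is $O(n^{-\delta})$; multiplying by $\mathbb P(S_n=0)\asymp n^{-1/\alpha}$ gives $n^{-\delta-1/\alpha}=n^{-1-1/(\alpha\beta)}$, as claimed, and the constant is $d\, f_\beta(0) \cdot f_\alpha(0)$ times $\mathbb E[n^{\delta}\widetilde V_n^{-1/\beta}\mathbf 1_{\{\sum_y\widetilde N_n(y)\in d_0\mathbb Z\}}\mathbf 1_{\widetilde\Omega_n}\mid S_n=0]$. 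So the statement follows by assembling Steps one, two, and the replacement $\mathbb P(S_n=0)\to n^{-1/\alpha}f_\alpha(0)$, noting that replacing $\mathbf 1_{\widetilde\Omega_n}$ back by $1$ in the final expectation (or not) changes nothing at this order by Lemma \ref{omegasec5}.

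I expect the main obstacle to be the first step — controlling the Gaussian approximation error \emph{under the conditioning} $\{S_n=0\}$ — because the clean uniform-integrability argument of Lemma \ref{lem:borne} relied on subadditivity and stationarity of $N_n^*$ and $R_n$ for the unconditioned walk, and conditioning on $S_n=0$ breaks stationarity; this is precisely why Lemma \ref{lem:sec:normeVn} was proved separately, via the midpoint-splitting trick ($R_n\le R_{\lceil n/2\rceil}+\#\{S_{\lceil n/2\rceil+1}-S_n,\dots\}$ plus the Markov property on both halves) together with $\sup_x\mathbb P_x(S_{\lfloor n/2\rfloor}=0)=O(n^{-1/\alpha})$. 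So in the write-up I would lean on Lemma \ref{lem:sec:normeVn} exactly where Lemma \ref{lem:borne} was used in the RWRS case, and for $\beta\le 1$ establish (or cite from the analogous argument) that $\sup_n\mathbb E[(n^\delta/\widetilde V_n^{1/\beta})^p\mathbf 1_{\widetilde\Omega_n}\mid S_n=0]<\infty$ for every $p\ge 1$, which again reduces via \eqref{BSa}-type inequalities to moment bounds for $\widetilde N_n^*/n^{1-1/\alpha}$ that transfer to the bridge by the same midpoint splitting.
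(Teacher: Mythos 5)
Your proposal follows the paper's proof step for step: cut the integral at $|t|\le n^{-\delta+\eta}$ via Lemma \ref{resteint}, replace the characteristic-function product by $e^{-|t|^\beta\widetilde V_n(A_1+iA_2\,\mathrm{sgn}(t))}$ as in Lemma \ref{lem:gaussian} with the conditional error controlled by Lemma \ref{lem:sec:normeVn} (plus Jensen) and $\PP(S_n=0)=\O(n^{-1/\alpha})$, apply the change of variables of Lemma \ref{le13} to produce $2\pi\,\widetilde V_n^{-1/\beta}f_\beta(0)$, and finish with $\PP(S_n=0)\sim f_\alpha(0)\,n^{-1/\alpha}$. Your side digression about $\beta\le 1$ is unnecessary here, since Section 5 assumes $\beta>1$ throughout (which is also why Lemma \ref{lem:sec:normeVn} is stated only with the exponent $1/(\beta-1)$).
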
 
\begin{proof} By following the proof of Lemma \ref{lem:gaussian}, we see that,
uniformly on $\widetilde \Omega_n$, we have:
$$ \int_{|t|\le n^{-\delta + \eta}}
 \left\vert 
    \prod_y\varphi_\xi(t\widetilde N_n(y))- e^{-\vert t\vert^\beta
      \widetilde V_n(A_1+iA_2\textrm{sgn}(t))}\right\vert \, dt 
= o(\widetilde V_n^{-\frac 1 \beta})\, . $$
By using Lemma \ref{lem:sec:normeVn}, we get 
\begin{eqnarray*}
&&\int_{|t|\le n^{-\delta + \eta}}
 \EE \cro{ \va{
  		\prod_y\varphi_\xi(t\widetilde N_n(y))- e^{-\vert t\vert^\beta
     		 \widetilde V_n(A_1+iA_2\textrm{sgn}(t))} } {\bf 1}_{\widetilde \Omega_n}  {\bf 1}_{\{S_n=0\}}} \, dt\\
&= &  o(1)\times 
      \EE\cro{ \widetilde V_n^{-\frac 1 \beta} {\bf 1}_{\widetilde \Omega_n} {\bf 1}_{\{S_n=0\}}}\\
&= & o(n^{-\delta-\frac 1 \alpha})\times
\EE\cro{ \left. (n^{\delta\beta}\widetilde V_n^{-1})^{\frac 1 {\beta-1}}  {\bf 1}_{\widetilde \Omega_n} \ 
\right\vert\ S_n=0}^{\frac{\beta-1}{\beta}}    \quad (\mbox{since } \PP(S_n=0) = \O(n^{-\frac 1\alpha})) \, ,\\
&= &  o(n^{-1-\frac 1{\alpha\beta}}).
\end{eqnarray*}
\noindent By using \eqref{intZtilde} and Lemma \ref{resteint}, we see that it remains to estimate
$$\int_{|t|\le n^{-\delta + \eta}}
 {\mathbb E}
  \left[ e^{-\vert t\vert^\beta
      \widetilde V_n(A_1+i A_2 \textrm{sgn}(t))}
  {\bf 1}_{\{\sum_y\widetilde N_n(y)\in d_0{\mathbb Z}\}} 
  {\bf 1}_{\{S_n=0\}}{\bf 1}_{\widetilde \Omega_n} \right]\, dt.$$
But, as in the proof of Lemma \ref{le13}, we have
$$\int_{\vert t\vert\le n^{-\delta + \eta}}
  e^{-\vert t\vert^\beta
      \widetilde V_n(A_1+iA_2\textrm{sgn}(t))} \, dt
    = n^{-\delta}\left\{
2\pi\frac{n^\delta}{\widetilde V_n^{\frac 1\beta}}
      f_\beta\left(0\right)
\right\} +o(n^{-\delta}),$$
uniformly on $\widetilde \Omega_n$. We next take the expectation in both sides and  we conclude the proof by using that $\PP(S_n=0) \sim f_\alpha(0)n^{-1/\alpha}$. 
\end{proof}
\noindent The following lemma allows us to get rid of 
${\bf 1}_{ \{\sum_y\widetilde N_n(y)\in d_0{\mathbb Z}\} }$.
\begin{lem}\label{leminter0}
Assume that $d_1$ is a multiple of $d_0$.   
On $\{S_n=0\}$, we have
$$ \sum_y\widetilde N_n(y) \in d_0 {\mathbb Z} \ \ \Leftrightarrow\ \ n\in d_0 {\mathbb Z}.$$
\end{lem}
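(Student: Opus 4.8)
The plan is to translate the statement into an assertion about the number of vertical steps, and then to invoke the definition of $d_1$. First I would note that summing the definition of $\widetilde N_n(y)$ over $y$ collapses to $\sum_{y}\widetilde N_n(y)=\#\{1\le k\le n\ :\ \varepsilon_k=1\}=\sum_{k=1}^n\varepsilon_k$, so that, setting $m_n:=\#\{1\le k\le n\ :\ \varepsilon_k=0\}=n-\sum_{k=1}^n\varepsilon_k$ for the number of vertical steps performed before time $n$, and using $\sum_y\widetilde N_n(y)=n-m_n$, the claimed equivalence is precisely the statement that $m_n\in d_0\ZZ$ almost surely on $\{S_n=0\}$ (recall $n\in d_0\ZZ$ is deterministic).

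Next I would use the representation $S_n=\sum_{k=1}^n X_k(1-\varepsilon_k)=\sum_{1\le k\le n\,:\,\varepsilon_k=0}X_k$. Since $(X_k)_k$ is i.i.d. and independent of $(\varepsilon_k)_k$, conditionally on $(\varepsilon_k)_k$ the random variable $S_n$ is a sum of $m_n$ independent copies of $X_1$; in particular $\PP(S_n=0\mid(\varepsilon_k)_k)=\PP(X_1+\dots+X_{m_n}=0)$, with the convention that the empty sum equals $0$. Because $\PP(m_n=m)>0$ for every $0\le m\le n$ (the $\varepsilon_k$ being i.i.d. Bernoulli of parameter $p\in(0,1)$), the event $\{S_n=0\}\cap\{m_n=m\}$ has positive probability if and only if $m=0$ or $\PP(X_1+\dots+X_m=0)>0$. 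In the latter case $d_1$ divides $m$ by the very definition of $d_1$ as the g.c.d. of $\{m\ge 1\ :\ \PP(X_1+\dots+X_m=0)>0\}$, and the case $m=0$ trivially satisfies $d_1\mid 0$. Hence $d_1\mid m_n$ almost surely on $\{S_n=0\}$.

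Finally, since by hypothesis $d_0$ divides $d_1$, we deduce $d_0\mid m_n$ almost surely on $\{S_n=0\}$, and therefore $\sum_y\widetilde N_n(y)=n-m_n\equiv n\pmod{d_0}$ there, which is exactly the asserted equivalence. I do not expect any genuine difficulty in this argument; the only mildly delicate points are the first reduction — one must remember that $\sum_y\widetilde N_n(y)\ne n$ in general here, unlike in the RWRS sections, so it is the count $m_n$ of vertical steps, not $n$ itself, that is constrained — and the fact that the identity "on $\{S_n=0\}$" is to be read modulo a $\PP$-null set, which is harmless since Lemma~\ref{leminter0} is only ever used inside an integral over $\{S_n=0\}$.
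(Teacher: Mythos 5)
Your proof is correct and takes essentially the same route as the paper's: you reduce the claim to showing that the number of vertical steps is a multiple of $d_1$ (hence of $d_0$) almost surely on $\{S_n=0\}$, by conditioning on $(\varepsilon_k)_k$ and invoking the definition of $d_1$. The paper's proof is terser — it simply asserts that "the number of vertical moves is necessarily in $d_1\ZZ$" — but the conditioning argument you spell out is exactly the justification behind that sentence, and your remark that the equivalence holds only up to a $\PP$-null set is an accurate observation about what the lemma really claims.
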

\begin{proof}
Let $m_n:=\sum_y\widetilde N_n(y)=\sum_{k=1}^n\varepsilon_k$ be the number of 
horizontal moves before time $n$.

\noindent If $S_n=0$, the number $n-m_n$ of vertical moves is necessarily in $d_1{\mathbb Z}$
and so in $d_0{\mathbb Z}$, since $d_1$ is a multiple of $d_0$ by hypothesis. Hence $m_n$ is in $d_0{\mathbb Z}$ if and only if 
$n$ is in $d_0{\mathbb Z}$.
\end{proof}
\noindent We will need the following estimate:
\begin{lem}\label{lemmaone}
Let $V_n:=\sum_{x\in\mathbb Z} N_n(x)^\beta$. Then
$${\mathbb E}\left[|\widetilde{V}_n - p^{\beta} V_n| \ \Big|\ S_n=0 \right]=\O(n^{\delta\beta
    -\frac {\alpha -1}{2\alpha} }).$$
\end{lem}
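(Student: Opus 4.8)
The plan is to decompose $\widetilde N_n(y)-pN_n(y)=H_n(y)$ as in \eqref{NtildeN} and then control $|\widetilde V_n-p^\beta V_n|=\big|\sum_y(\widetilde N_n(y)^\beta-p^\beta N_n(y)^\beta)\big|$ by the elementary inequality $|a^\beta-b^\beta|\le \beta(a\vee b)^{(\beta-1)}|a-b|$ valid for $a,b\ge 0$ (with the convention that for $\beta\le 2$ one uses $|a^\beta-b^\beta|\le \beta\max(a,b)^{\beta-1}|a-b|$ when $\beta\ge1$; recall here $\beta>1$). Applying this with $a=\widetilde N_n(y)$, $b=pN_n(y)$ and using $\widetilde N_n(y)\le N_n(y)$ gives, for some constant $c_\beta$,
\[
|\widetilde V_n-p^\beta V_n|\le c_\beta \sum_y N_n(y)^{\beta-1}|H_n(y)|.
\]
By the Cauchy--Schwarz inequality (in the counting measure on the range of $S$),
\[
\sum_y N_n(y)^{\beta-1}|H_n(y)|\le \Big(\sum_y N_n(y)^{2(\beta-1)}\Big)^{1/2}\Big(\sum_y H_n(y)^2\Big)^{1/2}.
\]

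Next I would take conditional expectations given $\{S_n=0\}$ and apply Cauchy--Schwarz once more in the probability space to get
\[
{\mathbb E}\big[|\widetilde V_n-p^\beta V_n|\ \big|\ S_n=0\big]
\le c_\beta\, {\mathbb E}\Big[\sum_y N_n(y)^{2(\beta-1)}\ \Big|\ S_n=0\Big]^{1/2}\,
{\mathbb E}\Big[\sum_y H_n(y)^2\ \Big|\ S_n=0\Big]^{1/2}.
\]
For the first factor, since $2(\beta-1)\le \beta$ when $\beta\le 2$, one has $\sum_y N_n(y)^{2(\beta-1)}\le R_n^{(2-\beta)_+}\big(\sum_y N_n(y)^\beta\big)^{\cdots}$ — more simply, $\sum_y N_n(y)^{2(\beta-1)}\le (N_n^*)^{2(\beta-1)-1}\sum_y N_n(y)= n(N_n^*)^{2\beta-3}$ when $2\beta-3\ge 0$, and $\le n\cdot 1$ otherwise; in all cases $\sum_y N_n(y)^{2(\beta-1)}\le n (N_n^*)^{(2\beta-3)_+}$, which combined with the moment bound \eqref{Nnynu}, i.e. ${\mathbb E}[(N_n^*)^\nu]=\O(n^{\nu(1-1/\alpha)})$, and the splitting-at-$n/2$ trick from the proof of Lemma \ref{lem:sec:normeVn} (to remove the conditioning at cost $\PP(S_n=0)^{-1}=\O(n^{1/\alpha})$) yields that this conditional expectation is $\O(n^{2\delta\beta-1+1/\alpha})$ up to the correct powers — the bookkeeping here mirrors the unconditioned estimate $V_n\asymp n^{\delta\beta}$. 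For the second factor I would use that $H_n(y)$ is a martingale (as noted after \eqref{NtildeN}), so $\sum_y{\mathbb E}[H_n(y)^2]={\rm Var}(\varepsilon_1)\sum_y{\mathbb E}[N_n(y)]= {\rm Var}(\varepsilon_1)\,n$; again one conditions on $\{S_n=0\}$ by the split-at-$n/2$ argument to get ${\mathbb E}[\sum_y H_n(y)^2\mid S_n=0]=\O(n)$.

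Putting the two factors together, the bound should come out to $\O\big(n^{\delta\beta}\cdot n^{-\,\text{(gain)}}\big)$ with the gain being exactly $\tfrac{\alpha-1}{2\alpha}$ after one tracks the powers of $n$ coming from $(N_n^*)^{(2\beta-3)_+}\le n^{(1-1/\alpha)(2\beta-3)_+}$ together with the $n^{1/2}\cdot n^{1/2}=n$ from the two $\sqrt{n}$'s versus the $n^{\delta\beta}$ target; indeed $\delta\beta=\beta-1+\tfrac1\alpha$ so $n^{\delta\beta-\frac{\alpha-1}{2\alpha}}$ is precisely the claimed order.

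The main obstacle I anticipate is the bookkeeping of exponents in the first factor for the whole range $\beta\in(1,2]$: one must be careful that $\sum_y N_n(y)^{2(\beta-1)}$ is handled by the right interpolation (between $\sum N_n(y)=n$ and $\sum N_n(y)^\beta=V_n$, or via $N_n^*$), and that after dividing by $\PP(S_n=0)$ the exponent lands exactly on $\delta\beta-\tfrac{\alpha-1}{2\alpha}$ rather than something weaker. A secondary technical point is justifying the conditional estimates: the cleanest route is the decomposition $R_n\le R_{\lceil n/2\rceil}+\#\{S_{\lceil n/2\rceil+1}-S_n,\dots,0\}$ and the analogous decomposition for $\sum_y H_n(y)^2$ and $\sum_y N_n(y)^{2(\beta-1)}$, combined with the Markov property at time $\lceil n/2\rceil$ and $\sup_x\PP_x(S_{\lfloor n/2\rfloor}=0)=\O(n^{-1/\alpha})$, exactly as in the proof of Lemma \ref{lem:sec:normeVn}; I would simply invoke that lemma's argument verbatim.
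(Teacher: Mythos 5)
Your skeleton is the same as the paper's: decompose $\widetilde N_n(y)=pN_n(y)+H_n(y)$ as in \eqref{NtildeN}, use the elementary inequality for $|a^\beta-b^\beta|$, apply Cauchy--Schwarz to split into two factors, bound the martingale factor by $\O(n)$, and remove the conditioning on $\{S_n=0\}$ with the split-at-$\lceil n/2\rceil$ device from Lemma~\ref{lem:sec:normeVn}. All of this matches the paper.

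The gap is in your estimate of the first Cauchy--Schwarz factor. Your deterministic bound
\[
\sum_y N_n(y)^{2(\beta-1)}\le n\,(N_n^*)^{(2\beta-3)_+}
\]
combined with \eqref{Nnynu} gives the needed order only when $\beta\ge 3/2$, so that $(2\beta-3)_+=2\beta-3$ and one recovers the exponent $2(\beta-1)(1-\tfrac1\alpha)+\tfrac1\alpha$. For $\beta\in(1,3/2)$ the bound degenerates to $\sum_y N_n(y)^{2(\beta-1)}\le n$, so after taking square roots and multiplying by the $\O(\sqrt n)$ second factor you only get $\O(n)$. But
\[
\delta\beta-\frac{\alpha-1}{2\alpha}=\beta\Bigl(1-\frac1\alpha\Bigr)+\frac{3}{2\alpha}-\frac12<1
\quad\text{exactly when }\beta<\frac32,
\]
so your bound does not reach the claimed order on that range. (Your intermediate claim that the first conditional expectation is $\O(n^{2\delta\beta-1+1/\alpha})$ is also off by one power of $n$: the correct order is $n^{2\delta\beta-2+1/\alpha}=n^{2(\beta-1)(1-1/\alpha)+1/\alpha}$.)

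The paper avoids $N_n^*$ altogether for this factor. By the strong Markov property at the first hitting time of $x$ one has $\EE[N_n(x)^{2(\beta-1)}]\le \PP(N_n(x)>0)\,\EE[N_n(0)^{2(\beta-1)}]$, and summing over $x$ gives $\sum_x\EE[N_n(x)^{2(\beta-1)}]\le\EE[R_n]\,\EE[N_n(0)^{2(\beta-1)}]=\O\bigl(n^{1/\alpha}\bigr)\O\bigl(n^{2(\beta-1)(1-1/\alpha)}\bigr)$, which is exactly the exponent that makes the bookkeeping close for all $\beta\in(1,2]$. Alternatively, the H\"older interpolation you began and then dropped,
\[
\sum_y N_n(y)^{2(\beta-1)}\le R_n^{\,1-\frac{2(\beta-1)}{\beta}}\,V_n^{\,\frac{2(\beta-1)}{\beta}}\quad(\beta\le 2),
\]
together with $\EE[R_n]=\O(n^{1/\alpha})$ and $\EE[V_n]=\O(n^{\delta\beta})$, also lands on the correct exponent; abandoning it in favour of the $N_n^*$ bound was the misstep.
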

\begin{proof} 
Set again $m=\lfloor n/2\rfloor$ and $m'=\lceil n/2\rceil$.
%We first remark that there exists a positive constant $C$ such that for every $M\in{\mathbb Z}$, 
%\begin{equation}\label{prc}
%{\mathbb P}(S_{m}=M\textrm{ and }S_n=0) ={\mathbb P}(S_{m}=M) {\mathbb P}(S_{m'}=-M)\leq C {\mathbb P}(S_{n}=0) {\mathbb P}(S_{m}=M) 
%\end{equation}
%and 
%\begin{equation}\label{prc2}
%{\mathbb P}(S_{m}=M\textrm{ and }S_n=0) \leq C {\mathbb P}(S_{n}=0) {\mathbb P}(S_{m'}=-M).
%\end{equation}
By using the inequality $|a^{\beta}-b^{\beta}|\leq \beta |a-b| (a^{\beta-1}+b^{\beta-1})$ 
and the Cauchy--Schwarz inequality, we get
\begin{eqnarray}
\label{in1}
\nonumber {\mathbb E}\left[|\widetilde{V}_n - p^{\beta} V_n| \ \Big| \ S_n=0 \right]&\leq &\beta 
\ {\mathbb E}\left[\sum_{x\in {\mathbb Z}} (\widetilde{N}_n(x)^{\beta-1}+p^{\beta-1} N_n(x)^{\beta-1})^2\ \Big|\ S_n=0\right]^{1/2}\\
&\times  & {\mathbb E}\left[\sum_{x\in {\mathbb Z}} (\widetilde{N}_n(x)-p N_n(x))^2\ \Big|\ S_n=0\right]^{1/2}.
\end{eqnarray} 
We now estimate both expectations in the right hand-side of the above inequality. First note that $N_n(x)=N_m(x)+(N_n(x)-N_m(x))$ and that 
the sequence $((N_n(x)-N_m(x),x\in \ZZ)\mid S_n=0)$ has the same distribution as 
$((N_{m'+1}(-x)-N_1(-x),x\in \ZZ)\mid S_n=0)$. Thus the Markov property gives
\begin{eqnarray*}
& & {\mathbb E}\left[\sum_{x\in {\mathbb Z}} (\widetilde{N}_n(x)^{\beta-1}+p^{\beta-1} N_n(x)^{\beta-1})^2\ \Big|\ S_n=0\right]\leq  4 \sum_{x\in {\mathbb Z}} {\mathbb E}\left[N_{n}(x)^{2(\beta-1)}\ \Big|\ S_n=0\right]\\
&\leq & C\left\{ \sum_{x\in {\mathbb Z}} \sum_{M\in \mathbb Z}  {\mathbb E}\left[N_{m}(x)^{2(\beta-1)}{\bf 1}_{\{S_m=M\}}\right]\frac{\PP(S_{m'}=-M)}{\PP(S_n=0)}\right.\\
 & &+\left.\sum_{x\in {\mathbb Z}} \sum_{M\in \mathbb Z}{\mathbb E}\left[(N_{m'}(x))^{2(\beta-1)}{\bf 1}_{\{S_{m'}=-M\}}\right]\frac{\PP(S_m=M)}{\PP(S_n=0)}\right\},
\end{eqnarray*}
for some constant $C>0$. Since $\sup_M \PP(S_{m'}=-M)/\PP(S_n=0)<+\infty$ and $\sup_M \PP(S_m=M)/\PP(S_n=0)<+\infty$, we get 
$${\mathbb E}\left[\sum_{x\in {\mathbb Z}} (\widetilde{N}_n(x)^{\beta-1}+p^{\beta-1} 
N_n(x)^{\beta-1})^2\ \Big|\ S_n=0\right]\le C \sum_{x\in {\mathbb Z}}  {\mathbb E}\left[N_{m'}(x)^{2(\beta-1)}\right].$$
Then Markov property again and \eqref{Nnynu} show that
\begin{eqnarray}
\label{in2}
\nonumber {\mathbb E}\left[\sum_{x\in {\mathbb Z}} (\widetilde{N}_n(x)^{\beta-1}+p^{\beta-1} N_n(x)^{\beta-1})^2\ \Big|\ S_n=0\right]
&\le & C \EE[R_{m'}] \times \EE\left[N_{m'}(0)^{2(\beta-1)}\right] \\
&=&\O\left(n^{2(\beta-1)(1-\frac 1 \alpha)+\frac 1 \alpha}\right).
\end{eqnarray}
The same argument gives 
\begin{eqnarray*}
& &\sum_{x\in{\mathbb Z}}{\mathbb E}\left[ (\widetilde{N}_n(x)-p N_n(x))^2\ \Big|\ S_n=0\right]\\
&\le & C \left\{\sum_{x\in{\mathbb Z}}{\mathbb E}\left[ (\widetilde{N}_m(x)-p N_m(x))^2\right]+\sum_{x\in{\mathbb Z}}{\mathbb E}\left[ (\widetilde{N}_{m'}(x)-p N_{m'}(x))^2\right]\right\}, 
\end{eqnarray*}
for some constant $C>0$. Then by using \eqref{NtildeN} (note that 
$\bar{\varepsilon}_k$ is centered and independent of $(S_{\ell-1},\varepsilon_\ell,S_{k-1})$ 
if $\ell<k$), we get
\begin{eqnarray}
\label{in3}
\sum_{x\in{\mathbb Z}}{\mathbb E}\left[ (\widetilde{N}_n(x)-p N_n(x))^2\ \Big|\ S_n=0\right]=\O(n).
\end{eqnarray}
%&=& \sum_{x\in{\mathbb Z}}\sum_{M\in{\mathbb Z}}{\mathbb E}\left[ (\widetilde{N}_n(x)-p N_n(x))^2\ \Big|\ S_{m}=M; S_n=0\right]\frac{{\mathbb %P}(S_{m}=M;S_n=0)}{{\mathbb P} (S_n=0)} 
%\end{eqnarray*}
%For every fixed $M$, 
%\begin{eqnarray*}
%& &\sum_{x\in{\mathbb Z}}{\mathbb E}\left[ (\widetilde{N}_n(x)-p N_n(x))^2\Big|S_{m}=M;S_n=0\right]\leq 2 \sum_{x\in{\mathbb Z}}{\mathbb E}\left[ %(\widetilde{N}_{m}(x)-p N_{m}(x))^2\ \Big|\ S_m=M\right]\\
%&+& 2 \sum_{x\in{\mathbb Z}}{\mathbb E}\left[ (\widetilde{N}_n(x)-\widetilde{N}_{m}(x)-p (N_n(x)-N_m(x)))^2\ \Big|\ S_m=M;S_n=0\right]\\
%&=& 2 \sum_{x\in{\mathbb Z}}{\mathbb E}\left[ (\widetilde{N}_{m}(x)-p N_{m}(x))^2\ \Big|\ S_m=M\right]+ 2 \sum_{x\in{\mathbb Z}}{\mathbb E}\left[ %(\widetilde{N}_{m'}(x)-p N_{m'}(x))^2\ \Big|\ S_{m'}=-M\right]
%\end{eqnarray*}
%Then \eqref{prc} and \eqref{prc2} show that
%\begin{eqnarray*}
%\sum_{x\in{\mathbb Z}}{\mathbb E}\left[ (\widetilde{N}_n(x)-p N_n(x))^2\ \Big|\ S_n=0\right]&\leq& 2 C \left\{\sum_{x\in{\mathbb Z}}{\mathbb E}\left[ %(\widetilde{N}_m(x)-p N_m(x))^2\right]+\sum_{x\in{\mathbb Z}}{\mathbb E}\left[ (\widetilde{N}_{m'}(x)-p N_{m'}(x))^2\right]\right\}\\
\noindent The lemma now follows by combining \eqref{in1}, \eqref{in2} and \eqref{in3}
since $(\beta-1)\left(1-\frac 1\alpha\right)+\frac 1{2\alpha}+\frac 12
   =\delta\beta-\frac{\alpha-1}{2\alpha}$. 
\end{proof}
\begin{lem}\label{lemmatou}
Conditionally to the event $\{S_n=0\}$, the sequence $(V_n/n^{\delta\beta},n\ge 0)$ 
converges in distribution to the random variable $\int_{\mathbb R} (L_1^0(x))^\beta\, dx$.
 \end{lem}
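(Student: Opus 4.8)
The plan is to express $V_n/n^{\delta\beta}$ as a functional of the rescaled local time process of $S$ and to transfer to it the conditional functional limit theorem $(n^{-1/\alpha}S_{\lfloor nt\rfloor},t\in[0,1]\mid S_n=0)\Rightarrow\widetilde U^0$. Set $L_n(y):=n^{-(1-\frac1\alpha)}N_n(\lfloor y\,n^{1/\alpha}\rfloor)$ for $y\in\mathbb R$. A change of variable in $V_n=\sum_xN_n(x)^\beta$, using $\delta\beta=\beta-\frac\beta\alpha+\frac1\alpha$, gives the exact identity $V_n/n^{\delta\beta}=\int_{\mathbb R}L_n(y)^\beta\,dy$; moreover on $\widetilde\Omega_n$, whose conditional probability is $1-o(1)$ by Lemma \ref{omegasec5}, this step function differs in supremum norm from its piecewise-linear interpolation, still denoted $L_n$, by at most $n^{-(1-\frac1\alpha-\gamma)/2}\to0$, so we may work with a continuous version. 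It therefore suffices to show that, under $\PP(\,\cdot\mid S_n=0)$, $\int_{\mathbb R}L_n(y)^\beta\,dy$ converges in law to $\int_{\mathbb R}(L_1^0(y))^\beta\,dy$ — this is the conditional counterpart of the unconditioned statement $n^\delta V_n^{-1/\beta}\Rightarrow|L|_\beta^{-1}$ used in the proof of Lemma \ref{le13} (Lemma 6 in \cite{KestenSpitzer}), which the scheme below imitates.

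First I would identify the limit of $L_n$. For any bounded uniformly continuous $\psi$ one has $\int_{\mathbb R}\psi(y)L_n(y)\,dy=\frac1n\sum_{k=0}^{n-1}\psi(n^{-1/\alpha}S_k)+o(1)=\int_0^1\psi(n^{-1/\alpha}S_{\lfloor ns\rfloor})\,ds+o(1)$, which converges in law under $\{S_n=0\}$, by the bridge limit theorem, to $\int_0^1\psi(\widetilde U^0_s)\,ds=\int_{\mathbb R}\psi(y)L_1^0(y)\,dy$; hence the random occupation measures $L_n(y)\,dy$ converge in law to $L_1^0(y)\,dy$. To upgrade this to convergence of the densities I would prove $C$-tightness of $(L_n\mid S_n=0)$: proceeding as in the proofs of Lemma \ref{Francoise} and Lemma \ref{lemmaone} — inserting the Markov property at time $\lfloor n/2\rfloor$, using that $\sup_M\PP(S_{\lceil n/2\rceil}=-M)$ and $\PP(S_n=0)$ are both of order $n^{-1/\alpha}$, and invoking \eqref{eq3} — one gets, for every integer $m\ge1$ and all $x\ne y$, $\EE[(N_n(x)-N_n(y))^{2m}\mid S_n=0]=\O(|x-y|^{(\alpha-1)m}n^{(1-\frac1\alpha)m})$, hence $\EE[|L_n(y)-L_n(z)|^{2m}\mid S_n=0]=\O(|y-z|^{(\alpha-1)m})$ uniformly in $n$ whenever $|y-z|\ge n^{-1/\alpha}$, while on $\widetilde\Omega_n$ the oscillation of $L_n$ over intervals of length $n^{-1/\alpha}$ is $\O(n^{-(1-\frac1\alpha-\gamma)/2})$. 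Choosing $m>1/(\alpha-1)$, Kolmogorov's tightness criterion applies, and any subsequential limit is a continuous function whose associated measure is $L_1^0(y)\,dy$, so it equals $L_1^0$; thus $(L_n\mid S_n=0)$ converges in law to $L_1^0$, uniformly on compact sets.

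It then remains to pass to $\beta$-th powers. Since $\beta\le2$ and $\int_{\mathbb R}L_n(y)\,dy=1$, one has $\int_{|y|>K}L_n(y)^\beta\,dy\le(\sup_yL_n(y))^{\beta-1}\,\frac1n\#\{k<n:\ |S_k|>Kn^{1/\alpha}\}$; here $\sup_yL_n(y)$ converges in law to $\sup_xL_1^0(x)<\infty$ (cf. \eqref{contsup} and \cite{Borodin3}), and $\frac1n\#\{k<n:|S_k|>Kn^{1/\alpha}\}$ converges in law under $\{S_n=0\}$ to $\int_0^1{\bf 1}_{\{|\widetilde U^0_s|>K\}}\,ds$, which tends to $0$ in probability as $K\to\infty$; hence the tail of $\int L_n^\beta$ is, uniformly in $n$, negligible in probability as $K\to\infty$. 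For fixed $K$, the local uniform convergence $L_n\to L_1^0$ and $\beta>1$ give $\int_{|y|\le K}L_n(y)^\beta\,dy\to\int_{|y|\le K}(L_1^0(y))^\beta\,dy$ in law, and letting $K\to\infty$ yields the claim.

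The step I expect to be the main obstacle is the tightness and identification of the rescaled local time \emph{under the conditioning} $\{S_n=0\}$: one must check that the Jain--Pruitt moment estimates \eqref{eq3} survive the conditioning — this is precisely where the comparison $\sup_M\PP(S_{\lceil n/2\rceil}=-M)\le C\,\PP(S_n=0)$, itself a consequence of the local limit theorem for the walk $S$, enters — and that the limiting occupation measure genuinely has a jointly continuous density, namely the bridge local time $L^0$. Once this is in hand, the rest is a routine adaptation of the unconditioned argument in \cite{KestenSpitzer}.
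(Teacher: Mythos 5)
Your proposal is correct, but it takes a genuinely different route from the paper's. The paper does not attempt a direct tightness--identification argument for the conditioned rescaled local time. Instead, it verifies the two hypotheses \textbf{(RW1)} (the conditioned invariance principle to the bridge, quoted from \cite{Liggett}) and \textbf{(RW2)} (second-moment estimates for the conditioned local time, obtained by the Markov-insertion trick at time $\lfloor n/2\rfloor$ that you also use, combined with Lemmas~1 and~3 of \cite{KestenSpitzer}), and then invokes Theorem~4.1 of \cite{NadineClement} as a black box. Crucially, the paper applies that theorem to an \emph{auxiliary} RWRS $\sum_k \xi_{S_k}$ whose scenery is i.i.d.\ $\beta$-stable with characteristic function $\exp(-c|\theta|^\beta)$; because then $\EE[e^{i\theta n^{-\delta}\sum_k\xi_{S_k}}\mid S]=\exp(-c|\theta|^\beta n^{-\delta\beta}V_n)$, the conditional functional CLT for that RWRS translates (via Lemma~5 of \cite{KestenSpitzer}) directly into convergence of the Laplace transform $\EE[\exp(-s\,n^{-\delta\beta}V_n)\mid S_n=0]\to\EE[\exp(-s\int (L_1^0)^\beta)]$ for all $s\ge0$, which gives the claim. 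What each approach buys: the paper's argument only needs \emph{second} conditional moments of local-time increments and offloads all the $C$-tightness and limit-identification machinery to \cite{NadineClement}; your argument is more self-contained and transparent, but it needs conditioned $2m$-th moments with $m$ large enough to run Kolmogorov's criterion, a separate verification that the stable bridge has a jointly continuous local time, and a tail estimate to justify passing from $L_n\to L_1^0$ (locally uniformly) to $\int L_n^\beta\to\int (L_1^0)^\beta$. One technical remark: the event $\widetilde{\Omega}_n$ as defined controls the increments of $\widetilde N_n$ (through $\mathcal C_n$), not those of $N_n$, so when you bound the gap between the step function $L_n$ and its piecewise-linear interpolation you should add an analogue of $\mathcal C_n$ for $N_n$ (proved identically from \eqref{eq3}); this does not affect the argument but should be stated.
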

\begin{proof}
According to \cite{NadineClement}, the lemma will essentially follow from the two 
following statements~:
\begin{itemize}
\item[{\bf (RW1)}] The sequence of conditioned processes
$\left(\left(n^{-1/\alpha}S_{\lfloor nt \rfloor}
\left.\right\vert S_{n}=0\right),t\in [0,1]\right)$ 
converges in distribution to the bridge $(\widetilde U^0_t,t\in [0,1])$, as $n\to \infty$.
\item[{\bf (RW2)}]
\begin{itemize}
\item[(i)] $$\sup_y{\mathbb E}\left[N_{n}(y)^2\ \vert\ S_{n}=0\right]=
       \O(n^{2-\frac 2\alpha}).$$
\item[(ii)] There exists a constant $C>0$ such that for every $x,y\in {\mathbb R}$,
$${\mathbb E}\left[ \left.\left( N_{n}\left(\lfloor 
       n^{\frac 1 \alpha}x\rfloor \right) - N_{n}\left(\lfloor 
       n^{\frac 1 \alpha}y\rfloor \right)  \right)^2
       \ \right\vert\ S_n=0\right]\le C n^{2-\frac 2\alpha} |x-y|^{\alpha-1}.$$
\end{itemize}
\end{itemize}
Part {\bf (RW1)} is proved in \cite{Liggett}.

\noindent We prove now {\bf (RW2)} starting with Part $(i)$. By using the same argument as in the proof of Lemma \ref{lemmaone}, we get
$${\mathbb E}\left[N_{n}(y)^2\ \vert\ S_{n}=0\right]\le C (\EE[N_m(y)^2]+\EE[N_{m'+1}(-y)^2]),$$
for some constant $C>0$, with $m$ and $m'$ as in the previous lemma. The desired result now follows from Lemma 1 in \cite{KestenSpitzer}.

\noindent For Part $(ii)$, set $N_n(x,y):=N_n(x)-N_n(y)$. Then use again the argument of the previous lemma, which gives 
$$\EE[N_n(x,y)^2\mid S_n=0] \le C (\EE[N_m(x,y)^2]+\EE[N_{m'+1}(-x,-y)^2]+1),$$
for some constant $C>0$. The result then follows from Lemma 3 in \cite{KestenSpitzer}.

%Part $(i)$ can be proved by using the same  .
%Recall the notation $m=\lfloor n/2\rfloor$ and $m'=\lceil n/2\rceil$. Since $m+m'=n$,
%$$
%{\mathbb E}\left[N_{n}(y)^2\vert S_{n}=0\right]
%    = \sum_{M\in {\mathcal Z}}
%{\mathbb E}\left[(N_m(y)+N'_{m'}(y))^2\vert S_m=M,S_{n}=0\right]
%\frac{ {\mathbb P}(S_m=M,S_{n}=0) } {{\mathbb P}(S_{n}=0)},
%$$
%with $N'_{m'}(y)=\#\{k=m,...,n-1\ :\ S_k=y\}$.
%Moreover 
%$$
%{\mathbb E}\left[(N_m(y)+N'_{m'}(y))^2\vert S_m=M,S_{n}=0\right]
%\le 2 \left(
%{\mathbb E}\left[(N_m(y))^2\vert S_m=M\right]
%+{\mathbb E}\left[( N_{m'}(-y))^2\vert   S_{m'}=-M\right]
%\right)
%$$
%and
%$$
%{\mathbb P}(S_m=M,S_{n}=0) ={\mathbb P}(S_m=M){\mathbb P}( S_{m'}=-M) 
%$$
%since $(S_{n-1}-S_{n},...,S_{m}-S_{n})$ has the same distribution
%as $(-S_1,...,-S_{m'})$.
%We have
%$$ {\mathbb P}(S_m=M)+ {\mathbb P}(S_{m'}=-M)  =\O(n^{-\frac 1\alpha}).$$
%Therefore, according to Lemma 1 in \cite{KestenSpitzer}, we have
%$${\mathbb E}\left[N_{n}(y)^2\vert S_{n}=0\right]
%    \le C {\mathbb E}\left[(N_m(y))^2+(N_{m'}(-y))^2\right]
%   =\O\left( n\right).$$
%A similar argument will give us Part $(ii)$. 
%Notice that 
%$$N_{n}(x)-N_{n}(y)=(N_m(x)-N_m(y))+(N'_{m'}(x)-N'_{m'}(y)). $$
%Thus
%$$(N_{n}(x)-N_{n}(y))^2\le 2\left((N_m(x)-N_m(y))^2+
%   (N'_m(x)-N'_m(y))^2\right) .$$
%Set $M_{k}(x,y):=N_k (x) - N_k(y)$. 
%As previously and according to Lemma 3 in \cite{KestenSpitzer}, 
%$$
%{\mathbb E}\left[ \left.\left( M_{n}(x,y) \right)^2
%        \right\vert S_n=0\right]\le 
%     C {\mathbb E}[ (M_m(x,y))^2 +(M_{m'}(-x,-y))^2]
%     \le C'\vert x-y\vert n^{\frac 12}.
%$$
\noindent We can now apply Theorem 4.1 in \cite{NadineClement} in the case when the random scenery is a sequence of i.i.d. random variables with $\beta-$stable distribution and with characteristic function of the form $\theta\mapsto\exp(-c|\theta|^{\beta})$. We deduce that conditionally to $\{S_n=0\}$, 
$$n^{-\delta} \sum_{k=1}^n \xi_{S_k} \mathop{\longrightarrow}_{n\rightarrow\infty}
^{\mathcal{L}} \int_{\mathbb R} L_1^0(x)\, dY_x,$$
where $(Y_x,x\in \RR)$ is a two-sided $\beta-$stable L\'evy process independent of $\widetilde U^0$
and limit in distribution of $\left(n^{-\frac 1\beta}\sum_{k=0}^{\lfloor nx\rfloor}
   \xi_k,\ x\in{\mathbb R}\right)$, when $n\to \infty$. Therefore (see for instance Lemma 5 in \cite{KestenSpitzer}), for every $\theta\in{\mathbb R }$,
$$ {\mathbb E} \left(\exp\left(-c|\theta|^\beta n^{-\delta\beta}V_n\right)\ \Big|\ S_n=0\right) 
\rightarrow {\mathbb E} \left(e^{-c|\theta|^{\beta}\int_{{\mathbb R}} (L_1^0(x))^\beta\, dx}\right)\quad \textrm{when }n\to \infty,
$$
which proves the lemma.
\end{proof}

\begin{lem}\label{lemmatoubis} Conditionally to the event $\{S_n=0\}$, 
the sequence $(n^{\delta\beta}\widetilde V_n^{-1}{\bf 1}_{\widetilde \Omega_n},n\ge 0)$ converges in distribution to the random variable $(p|L^0|_\beta)^{-\beta}$.
 \end{lem}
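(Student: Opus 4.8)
The plan is to derive Lemma~\ref{lemmatoubis} from Lemmas~\ref{lemmaone} and~\ref{lemmatou}, together with the estimate $\PP(\widetilde\Omega_n)=1-o(n^{-1-\frac{1}{\alpha\beta}})$ of Lemma~\ref{omegasec5}, through a chain of Slutsky-type arguments followed by an inversion. I first record that, conditionally on $\{S_n=0\}$, the indicator $\mathbf 1_{\widetilde\Omega_n}$ tends to $1$ in probability: since $\PP(S_n=0)$ is of order $n^{-1/\alpha}$ (as recalled several times above, e.g. in the proof of Lemma~\ref{leminter}),
\[
\PP\big(\widetilde\Omega_n^c\mid S_n=0\big)\le\frac{\PP(\widetilde\Omega_n^c)}{\PP(S_n=0)}=o\big(n^{\frac1\alpha-1-\frac{1}{\alpha\beta}}\big)\longrightarrow 0,
\]
the limit holding because $\alpha>1$, hence $\frac1\alpha-1-\frac{1}{\alpha\beta}<0$.

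Next, dividing the estimate of Lemma~\ref{lemmaone} by $n^{\delta\beta}$ gives
\[
\EE\big[\,|\widetilde V_n/n^{\delta\beta}-p^\beta V_n/n^{\delta\beta}|\;\big|\;S_n=0\,\big]=\O\big(n^{-\frac{\alpha-1}{2\alpha}}\big)\longrightarrow 0
\]
since $\alpha>1$, so that $\widetilde V_n/n^{\delta\beta}-p^\beta V_n/n^{\delta\beta}\to 0$ in $L^1$, hence in probability, conditionally on $\{S_n=0\}$. Lemma~\ref{lemmatou} states that $V_n/n^{\delta\beta}$ converges in distribution, conditionally on $\{S_n=0\}$, to $\int_\RR(L_1^0(x))^\beta\,dx=|L^0|_\beta^\beta$; combining the two via Slutsky's lemma, $\widetilde V_n/n^{\delta\beta}$ converges in distribution, conditionally on $\{S_n=0\}$, to $p^\beta|L^0|_\beta^\beta$.

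Finally I would invert. Since $\int_\RR L_1^0(x)\,dx=1$ one has $|L^0|_\beta>0$ almost surely, and $|L^0|_\beta<\infty$ almost surely (either because the limit random variable of Lemma~\ref{lemmatou} is a.s. finite, or by the H\"older/Jensen argument of the first lemma of Section~2 applied to the bridge $\widetilde U^0$ and its local time $L^0$, using $\int_\RR L_1^0=1$ and $\beta\le2$); hence $x\mapsto x^{-1}$ is almost surely continuous at the limit $p^\beta|L^0|_\beta^\beta\in(0,\infty)$. On $\widetilde\Omega_n$ one has $\widetilde V_n\ge c\,n^{\delta\beta-\gamma}$ (as noted in the proof of Lemma~\ref{resteint}), so $n^{\delta\beta}\widetilde V_n^{-1}\mathbf 1_{\widetilde\Omega_n}$ is a well-defined nonnegative random variable which on $\widetilde\Omega_n$ equals $(\widetilde V_n/n^{\delta\beta})^{-1}$. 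Testing against a bounded continuous function, splitting off the event $\widetilde\Omega_n^c$ (of conditional probability $o(1)$ by the first paragraph), and applying the continuous mapping theorem to $\widetilde V_n/n^{\delta\beta}\to p^\beta|L^0|_\beta^\beta$, one obtains that $n^{\delta\beta}\widetilde V_n^{-1}\mathbf 1_{\widetilde\Omega_n}$ converges in distribution, conditionally on $\{S_n=0\}$, to $(p^\beta|L^0|_\beta^\beta)^{-1}=(p|L^0|_\beta)^{-\beta}$, as claimed. The only genuinely delicate point is the bookkeeping of convergence in distribution for the sequence of conditional laws given $\{S_n=0\}$ — i.e. checking that Slutsky's lemma and the continuous mapping theorem really apply in that setting — together with, in the last step, controlling the event $\widetilde\Omega_n^c$ on which the reciprocal could a priori be arbitrarily large; this is precisely why the factor $\mathbf 1_{\widetilde\Omega_n}$ appears in the statement.
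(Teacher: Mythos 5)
Your proof is correct, and the ingredients are the same as in the paper (Lemmas~\ref{omegasec5}, \ref{lemmaone}, \ref{lemmatou}), but you assemble them in a genuinely different order. The paper first passes to reciprocals (using Lemma~\ref{lemmatou} and continuous mapping to get $n^{\delta\beta}V_n^{-1}\to|L^0|_\beta^{-\beta}$ conditionally, then multiplying by ${\bf 1}_{\widetilde\Omega_n}$ by Slutsky), and only then uses Lemma~\ref{lemmaone}: it bounds $\EE[|n^{\delta\beta}\widetilde V_n^{-1}-n^{\delta\beta}(p^\beta V_n)^{-1}|{\bf 1}_{\widetilde\Omega_n}\mid S_n=0]$ by converting the $L^1$--estimate on $\widetilde V_n-p^\beta V_n$ into an estimate on the difference of reciprocals via the quantitative lower bound $\widetilde V_n\ge c\,n^{\delta\beta-\gamma}$ on $\widetilde\Omega_n$; this is precisely where the hypothesis $\gamma<(\alpha-1)/(4\alpha)$ enters. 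You instead feed Lemma~\ref{lemmaone} in \emph{before} inverting: dividing by $n^{\delta\beta}$ directly gives $\widetilde V_n/n^{\delta\beta}-p^\beta V_n/n^{\delta\beta}\to 0$ in conditional $L^1$ at rate $n^{-(\alpha-1)/(2\alpha)}$, independent of $\gamma$, and Slutsky with Lemma~\ref{lemmatou} then yields $\widetilde V_n/n^{\delta\beta}\to p^\beta|L^0|_\beta^\beta$ in conditional law; only at the end do you invert via continuous mapping, using $0<|L^0|_\beta<\infty$ a.s.\ and absorbing $\widetilde\Omega_n^c$ by its $o(1)$ conditional probability. The net effect is that your route sidesteps the condition $\gamma<(\alpha-1)/(4\alpha)$ for this particular lemma (you only use the positivity of $\widetilde V_n$ on $\widetilde\Omega_n$, not the power of $n$ in the lower bound), and the cancellation $-2\delta\beta+2\gamma+2\delta\beta$ in the paper's display never needs to be tracked; the price is a slightly more careful final step, since $x\mapsto1/x$ must be applied through the Mann--Wald theorem (discontinuity at $0$ is null for the limit law) and the ill-definedness of the reciprocal on $\widetilde\Omega_n^c$ must be handled explicitly, which you do correctly.
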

\begin{proof}
By Lemma \ref{lemmatou}, the sequence 
$(n^{\delta\beta}V_n^{-1}, n \ge 0)$  converges in distribution to
$|L^0|_\beta^{-\beta}$, conditionally to $\{S_n=0\}$.
On the other hand, Lemma \ref{omegasec5} implies that the sequence 
$({\bf 1}_{\widetilde \Omega_n} ,n\ge 0)$ converges in distribution 
to the constant $1$, conditionally to 
$\{S_n=0\}$. 
Hence, 
the sequence $(n^{\delta\beta}V_n^{-1} {\bf 1}_{\widetilde \Omega_n}, n \ge 0)$
 converges in distribution to
$|L^0|_\beta^{-\beta}$, conditionally to $\{S_n=0\}$. 
Next recall that on $\widetilde \Omega_n$, 
$V_n\ge \widetilde V_n \ge cn^{\delta\beta-\gamma}$, for some constant $c>0$. Thus Lemma \ref{lemmaone} gives 
$$\EE\left[\left|\frac{n^{\delta\beta}}{\widetilde V_n}-\frac{n^{\delta\beta}}{p^\beta V_n}
\right|{\bf 1}_{\widetilde \Omega_n} \ \Big|\ S_n=0\right]=
\O\left(n^{-2\delta \beta +2\gamma+2\delta\beta-\frac{\alpha-1}{2\alpha}}\right)
=\O\left(n^{2\gamma-\frac{\alpha-1}{2\alpha}}\right).$$
Therefore, since $\gamma<(\alpha-1)/(4\alpha)$, the left hand side in the above equation converges to $0$ when $n\to \infty$. The lemma follows. 
\end{proof}

\noindent We finally obtain the 

\vspace{0.2cm} 
\noindent \textit{Proof of Theorem \ref{thrwrol}.}
The uniform integrability of the sequence 
$(n^{\delta}\widetilde V_n^{-1/\beta}
{\bf 1}_{\widetilde \Omega_n}, n \ge 0)$ conditionally to $\acc{S_n=0}$ is deduced from Lemma \ref{lem:sec:normeVn} . It
then follows from Lemma \ref{lemmatoubis}  that 
$$\EE\left[\frac{n^{\delta}}{\widetilde V_n^{\frac 1\beta}}{\bf 1}_{\widetilde \Omega_n} \ \Big| \ S_n=0 \right] \to 
p^{-1}\EE[|L^0|_\beta^{-1}] \quad \textrm{when }n\to \infty.$$

\noindent 
The theorem now follows from Lemmas \ref{leminter} and \ref{leminter0}.  
%This is certainly the case if $d_0 = 1$. Let us assume that $d_0=2$. On 
%$\acc{S_n =0}\cap\acc{\sum_y \widetilde N_n(y) \in d_0 \ZZ}$, the number of vertical
%moves is necessarily even, and so is the number of horizontal moves. Hence 
%$n$ has to be even in order to have $\PP\cro{M_n=(0,0)} \ne 0$. But if $n$ is
%even, on   $\acc{S_n =0}$, the number of vertical moves is even, so that 
%the number of horinzontal moves is also even. Otherwise stated, if $n$ is even,
%$\acc{S_n =0} \subset \acc{\sum_y \widetilde N_n(y) \in 2 \ZZ}$. 

%%%%%%%%%%%%%%%%%%%%%%%%%%%%%%%%%%%%%%%%%%%%%%%%%%%%%%%%%%%%%%%%%%%%%%%%%%%%%%%%%%%%%%%%%%%%%%%%%%%%%%%%%%%%%%%%%%%%%%%%%%%%%%%%
\appendix

\section{Control of the range}
%Let $\alpha\in]0;2]$.
%In this appendix, we consider a random walk $(S_n)_n$ be given by
%$S_0:=0$ and, for all $n\ge 1$, $S_n:=\sum_{k=0}^nX_k$ where
%$(X_k)_{k\ge 1}$ is a sequence of independent identically distributed 
%random variables such that $\left({n^{-\frac 1\alpha}}S_n\right)_n$ converges
%in distribution to a random variable with stable distribution $\alpha$.
%Again we define $R_n:=\#\{\ell\in{\mathbb Z} :\ S_k=\ell\}$.
We first gather some known facts about the range $R_n$ of the random walk $(S_n,n\ge 0)$. 
First of all, this walk is transient if, and only if, $\alpha < 1$. 
Moreover, there exists a constant $c>0$ such that 
\begin{equation}
\label{MeanRange}
\EE[R_n] \sim c \left\{ \begin{array}{ll}
				n & \mbox{ if } \alpha < 1 (\mbox{ 
				see \cite{spitzer} p.36}) \, ,
				\\ \frac{n}{\log(n)} & 
				\mbox{ if } \alpha = 1 (\mbox{ 
				see \cite{LGR} Theorem 6.9 p.698}) \, ,
				\\ n^{1/\alpha} &
				\mbox{ if } \alpha > 1 (\mbox{
				see \cite{LGR} Equation (7.a) p.703}) \, .
				\end{array} 
			\right.
\end{equation}
In addition, if $\alpha \leq 1$ (see \cite{spitzer} p.38-40 for $\alpha <1$, and \cite{LGR} Theorem 6.9 for 
$\alpha =1$), then
\begin{equation} 
\label{RangeTransient}
  \frac{R_n}{\EE[R_n]} \rightarrow  1 \mbox{ a.s.}  
\end{equation}
If $\alpha > 1$, it is proved in 
\cite{LGR} Theorem 7.1 p.703, that 
\[ \frac{R_n}{n^{1/\alpha}} \rightarrow  \lambda(U([0,1]))  
\mbox{ in distribution},
\]
where $\lambda$ denotes the Lebesgue measure, and $(U(s), s \in [0,1])$
is an $\alpha$-stable process. In this case, it is also proved in
\cite{LGR} that the constant $c$ appearing in \refeq{MeanRange}
is $\EE\cro{\lambda(U([0,1]))}$,  so that 
\begin{equation}
\label{RangeRec} 
 \frac{R_n}{\EE[R_n]}  \rightarrow  
\frac{\lambda(U([0,1]))}{\EE\cro{\lambda(U([0,1]))}}  
\mbox{ in distribution}.
\end{equation}

\noindent Our aim in this appendix is to prove the following result: 

\begin{lem}\label{lem:omega'_n}
Assume that $\alpha\in (0,2]$. Let $\gamma  \in (0,1/\alpha)$, and set 
\[
\R_n
:= \acc{  
\EE[R_n] n^{-\gamma} \le R_n \le \EE[R_n] n^{\gamma} } \, .
\]
Then there exists a constant $C>0$, such that 
\begin{equation}
\label{ContOmega}
\PP(\R_n)= 1- \O(\exp(-Cn^\gamma)) \, .
\end{equation}
\end{lem}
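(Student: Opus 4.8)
The plan is to derive the two tails in \eqref{ContOmega} separately, both from the comparison inequalities already in force for the range, exactly in the spirit of the treatment of $N_n^*$ in the proof of Lemma~\ref{lem:borne}. The first step is the submultiplicativity estimate
\begin{equation}\label{SousaddRange}
\PP(R_n \ge a+b) \le \PP(R_n \ge a)\,\PP(R_n \ge b)\qquad\text{for all }a,b\in\NN .
\end{equation}
Since $R$ has increments in $\{0,1\}$, the stopping time $\sigma_a:=\inf\{k:\ R_k\ge a\}$ satisfies $R_{\sigma_a}=a$ on $\{\sigma_a\le n\}=\{R_n\ge a\}$; on that event $R_n\le a+D$, where $D$ is the number of distinct sites visited during $[\sigma_a,n)$, and conditionally on $\mathcal F_{\sigma_a}$ the variable $D$ is stochastically dominated by the number of sites visited in $n$ steps by a walk started afresh at $S_{\sigma_a}$, which has the law of $R_n$ by translation invariance. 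Hence $\PP(R_n\ge a+b)\le\PP(\sigma_a\le n)\,\PP(R_n\ge b)=\PP(R_n\ge a)\,\PP(R_n\ge b)$; this is the same argument that yields \eqref{subadd} for $N_n^*$.

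For the upper bound, recall that $R_n/\EE[R_n]$ converges in distribution — to the constant $1$ if $\alpha\le1$ by \eqref{RangeTransient}, and to $\lambda(U([0,1]))/\EE[\lambda(U([0,1]))]$ if $\alpha>1$ by \eqref{RangeRec} — and is therefore tight, so one may fix $A>0$ (a continuity point of the limit) with $\PP(R_n\ge A\,\EE[R_n])\le\tfrac12$ for all large $n$. Setting $a_n:=\lceil A\,\EE[R_n]\rceil$ and iterating \eqref{SousaddRange} gives $\PP(R_n\ge k a_n)\le 2^{-k}$ for every $k\ge1$. Taking $k=k_n:=\lfloor\EE[R_n]n^\gamma/a_n\rfloor$, which exceeds $c\,n^\gamma$ for some $c>0$ and all large $n$ because $a_n\sim A\,\EE[R_n]\to\infty$, and noting $k_n a_n\le\EE[R_n]n^\gamma$, one gets $\PP(R_n>\EE[R_n]n^\gamma)\le 2^{-k_n}=\O(e^{-Cn^\gamma})$.

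For the lower bound I would split $\{0,\dots,n-1\}$ into $q:=\lceil n^\gamma/3\rceil$ consecutive blocks of length $m:=\lfloor n/q\rfloor$ (so that $qm\le n$), and write $R(B_i)$ for the number of distinct sites visited during the $i$-th block. By the Markov property and translation invariance, $R(B_1),\dots,R(B_q)$ are i.i.d.\ with the law of $R_m$ — each is the same function of a disjoint block of increments — and clearly $R_n\ge\max_{1\le i\le q}R(B_i)$, whence $\PP(R_n<t)\le\PP(R_m<t)^q$ for any $t$. It then remains to check, using the mean asymptotics \eqref{MeanRange} in each of the three regimes $\alpha<1$, $\alpha=1$, $\alpha>1$, that $t:=\EE[R_n]n^{-\gamma}$ satisfies $\limsup_n t/\EE[R_m]<1$ (indeed $t=o(\EE[R_m])$ when $\alpha>1$, since then $m\sim3n^{1-\gamma}$ makes $\EE[R_m]$ of order $n^{(1-\gamma)/\alpha}$ while $t$ is of order $n^{1/\alpha-\gamma}$); combined with $R_m/\EE[R_m]\to1$ a.s.\ for $\alpha\le1$ and with the positivity of the limit for $\alpha>1$, this gives $\PP(R_m<t)\to0$, hence $\PP(R_m<t)\le\rho<1$ eventually, so $\PP(R_n<\EE[R_n]n^{-\gamma})\le\rho^q=\O(e^{-Cn^\gamma})$. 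The factor $1/3$ in $q$ is chosen precisely so that $\EE[R_m]$ stays comfortably above $t$ in the transient range $\alpha<1$, where both are of order $n^{1-\gamma}$. (When $\gamma\ge1$, which can occur only for $\alpha<1$, both bounds are trivial for $n$ large, since $1\le R_n\le n$; so one may assume $\gamma<1$, which also ensures $m\to\infty$.)

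The submultiplicativity step is routine — it is the $N_n^*$ computation transplanted to $R_n$ — and the upper bound follows from it in a few lines. The main obstacle is the bookkeeping in the lower bound: one must genuinely split into the regimes $\alpha<1,\ \alpha=1,\ \alpha>1$ and invoke the precise asymptotics of $\EE[R_n]$ in \eqref{MeanRange} to be sure that the per-block mean $\EE[R_m]$ dominates $\EE[R_n]n^{-\gamma}$ while the number of blocks stays of order $n^\gamma$ (so that the exponent in $\O(e^{-Cn^\gamma})$ is the one claimed).
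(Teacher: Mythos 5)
Your proof is correct and follows essentially the same route as the paper: the submultiplicativity estimate for the upper tail (proved via the stopping time $\sigma_a$ exactly as in the paper), and a decomposition into $\Theta(n^\gamma)$ i.i.d.\ blocks for the lower tail, with the regime-by-regime check that the per-block mean $\EE[R_m]$ stays above the threshold $\EE[R_n]n^{-\gamma}$, followed by the appeal to \eqref{RangeTransient} or \eqref{RangeRec}. The only substantive difference is in bounding the base probability $\PP(R_n\ge K\,\EE[R_n])$ strictly below $1$: the paper does this in one line by Markov's inequality with $K=3$, whereas you invoke tightness of $R_n/\EE[R_n]$ from the distributional limits, which is slightly heavier machinery but equally valid; your explicit treatment of the degenerate case $\gamma\ge1$ (possible only when $\alpha<1$) is a correct and useful remark that the paper leaves implicit.
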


\begin{proof}
We first prove that for $n$ large enough, 
\begin{equation}
\label{BSRange}
\PP\cro{ R_n \geq \EE[R_n] n^{\gamma}} \leq \exp(-C n^{\gamma}).
\end{equation}
Let us recall that for every $a, b \in \NN$, we have 
\begin{equation} 
\label{SousaddRange}
\PP(R_n \geq a+b) \leq \PP(R_n \geq a) \PP(R_n \geq b) \, .
\end{equation} 
The proof is given for instance in \cite{Chen} 
and goes as follows. Let $\tau:= \inf\acc{k \, :\, R_k \geq a}$. Note that $\tau$ is a stopping time, and that
$R_{\tau} = a$ on $\{\tau < \infty\}$. Moreover,  
\begin{eqnarray*} 
\PP(R_n \geq a + b) & = & \PP (\tau \leq n; R_n \geq a+b)
\\
& = & \sum_{j=1}^n \PP(\tau = j; R_n \geq R_j + b)
\end{eqnarray*} 
Now, for $j \leq n$, $R_n \leq R_j + \# \acc{S_{j+1}, \cdots, S_n}
= R_j + \#\acc{S_{j+1}-S_j, \cdots, S_n-S_j}$.
By independence, we have
\begin{eqnarray*} 
\PP(R_n \geq a + b) & \leq & \sum_{j=1}^n \PP(\tau = j) \PP(R_{n-j} \geq b)
\\
& \leq & \PP(R_n \geq b) \PP(\tau \leq n),
\end{eqnarray*} 
proving \eqref{SousaddRange}. Hence, 
\begin{eqnarray*} 
\PP\left( R_n \geq \EE[R_n] n^{\gamma}\right) 
& \leq & \PP\left({ R_n \geq \floor{3 \EE[R_n]} \floor{\frac{n^{\gamma}}{3}}}\right)
\leq \PP\left({ R_n \geq \floor{3 \EE[R_n]}}\right)^{\floor{\frac{n^{\gamma}}{3}}}
\\
& \leq & \pare{\frac{\EE[R_n]}{\floor{3 \EE[R_n]}}}^{\floor{\frac{n^{\gamma}}{3}}}
\leq \pare{\frac{\EE[R_n]}{3 \EE[R_n]-1}}^{\floor{\frac{n^{\gamma}}{3}}}
\leq \pare{\frac{1}{2}}^{\floor{\frac{n^{\gamma}}{3}}}
\, .
\end{eqnarray*} 
This finishes the proof of \refeq{BSRange}. It remains now to prove that for $n$ large enough,
\begin{equation} 
\label{BIRange}
\PP\left({R_n \leq \EE[R_n] n^{-\gamma}}\right) \leq \exp(-C n^{\gamma}).
\end{equation} 
To this end, let $I_1,\cdots,I_N$ be disjoint subsequent intervals
of $\acc{0,\cdots,n}$, of the same length $l_n$ depending on $n$, so that
$l_n \gg 1$ and $N=\floor{n/l_n}$. Note that 
\[ R_n \geq \max_{j=1}^N\pare{\#\acc{S_k, k \in I_j}} \, , 
\]
and that the random variables $(\#\acc{S_k, k \in I_j}, 1\leq j \leq N)$ 
are i.i.d with the same law as $R_{l_n}$. Hence 
\begin{equation} 
\label{BIRange.2}
\PP\left({R_n \leq \EE[R_n] n^{-\gamma}}\right) 
\leq  \PP\left({\max_{j=1}^N\pare{\#\acc{S_k, k \in I_j}} \leq E(R_n) n^{-\gamma}}\right)
= \PP \left({R_{l_n} \leq \EE[R_n] n^{-\gamma}}\right)^{N}.
\end{equation} 
Choose now $l_n$ such that $\EE[R_{l_n}] \sim  3 \EE[R_n] n^{-\gamma}$.
By \refeq{MeanRange}, this gives
\[
l_n \sim \left\{ \begin{array}{ll}
		3 n^{1-\gamma} & \mbox{ if } \alpha < 1 
		\\
		3 (1-\gamma) n^{1-\gamma} & \mbox{ if } \alpha =1 
		\\
		3^{\alpha} n^{1-\alpha \gamma} &
		\mbox{ if } \alpha > 1, 
		\end{array} 
		\right.
\] 
so that 
\begin{eqnarray}
\label{N}
N \sim  \left\{ \begin{array}{ll}
		\frac{1}{3}  n^{\gamma} & \mbox{ if } \alpha < 1 
		\\
		\frac{1}{3(1-\gamma)} n^{\gamma} & \mbox{ if } \alpha =1 
		\\
		\frac{1}{3^{\alpha}} n^{\alpha \gamma} &
		\mbox{ if } \alpha > 1. 
		\end{array} 
		\right.
\end{eqnarray}
For $n$ large enough, $\EE[R_{l_n}] \geq 2 \EE[R_n] n^{-\gamma}$, and 
it follows from \refeq{BIRange.2} that 
\begin{eqnarray}
\label{eqRn}
\PP\left({R_n \leq \EE[R_n] n^{-\gamma}}\right) \leq  
 \PP \left({R_{l_n} \leq \frac{\EE[R_{l_n}]}{2}}\right)^N \, .
\end{eqnarray}
For $\alpha \leq 1$, $\PP \left({R_{l_n} \leq \frac{\EE[R_{l_n}]}{2}}\right)$ 
tends to zero by \refeq{RangeTransient}. By \refeq{RangeRec}, for $\alpha >1$, 
we have
$$\limsup_{n \rightarrow \infty} \PP \left({R_{l_n} \leq \frac{\EE[R_{l_n}]}{2}}
\right)
\leq  \PP \cro{({\lambda(U([0,1])) \leq \frac{1}{2}
 \EE\cro{\lambda(U([0,1]))}}}
< 1,$$
since a.s. $\lambda(U([0,1])) > 0$. 
In any case 
%, whatever is the value of $\alpha \in (0,2]$, 
there exists $p< 1$, such
that for all $\gamma \in (0,1/\alpha)$, and for $n$ large enough, 
$$\PP \left({R_{l_n} \leq \frac{\EE[R_{l_n}]}{2}}\right) \leq p.$$
Together with \eqref{eqRn} and \eqref{N}, this proves \refeq{BIRange} and the lemma. 
\end{proof}

\end{document}